\documentclass[12pt]{amsart}
\usepackage{amsfonts}
\usepackage{fullpage,multirow, amsmath, amsthm,amsfonts,amssymb,stmaryrd, mathrsfs,hhline}
\usepackage{mathdots}
\usepackage{pgf,tikz,tikz-cd}
\usepackage{mathrsfs}
\usepackage{url}
\usepackage{mathtools}
\usepackage{float}
\usetikzlibrary{arrows}
\usetikzlibrary[patterns]
\usetikzlibrary{matrix,decorations.pathreplacing,calc,shapes, decorations.pathmorphing}
\usepackage{hyperref}

\usepackage[all]{xy}

\usepackage[all]{xy}

\newtheorem{theorem}[subsection]{Theorem}
\newtheorem{lemma}[subsection]{Lemma}
\newtheorem{corollary}[subsection]{Corollary}

\newtheorem{proposition}[subsection]{Proposition}
\theoremstyle{definition}

\newtheorem{definition}[subsection]{Definition}
\newtheorem{definition-proposition}[subsection]{Definition-Proposition}

\newtheorem{question}[subsection]{Question}
\newtheorem{remark}[subsection]{Remark}

\newtheorem{notation}[subsection]{Notation}

\numberwithin{equation}{subsection}

\makeatletter

\newcommand{\Rmnum}[1]{\expandafter\@slowromancap\romannumeral #1@}
\makeatother

\def\calO{\mathcal{O}}

\def\calV{\mathcal{V}}

\def\calX{\mathcal{X}}

\def\gotha{\mathfrak{a}}
\def\gothm{\mathfrak{m}}

\def\gothp{\mathfrak{p}}
\def\goths{\mathfrak{s}}

\def\AAA{\mathbb{A}}

\def\CC{\mathbb{C}}
\def\FF{\mathbb{F}}

\def\QQ{\mathbb{Q}}

\def\ZZ{\mathbb{Z}}

\def\scrS{\mathscr{S}}

\def\a{\alpha}
\def\b{\beta}
\def\c{\gamma}

\def\e{\varepsilon}

\def\vp{\varpi}

\def\t{\tau}
\def\w{\mathbf{w}}
\def\tp{\tilde\gothp}

\def\p{\gothp}

\def\Rvv{R^{\ver}(\mathbf{w},\tau)}

\DeclareMathOperator{\Gal}{Gal}

\DeclareMathOperator{\Hom}{Hom}

\DeclareMathOperator{\NP}{NP}

\DeclareMathOperator{\Spec}{Spec}
\DeclareMathOperator{\Spf}{Spf}

\DeclareMathOperator{\Sym}{Sym}

\DeclareMathOperator{\diag}{diag}

\DeclareMathOperator{\ur}{ur}

\DeclareMathOperator{\cris}{cris}
\DeclareMathOperator{\Berk}{Berk}
\DeclareMathOperator{\ps}{ps}
\DeclareMathOperator{\ver}{ver}
\DeclareMathOperator{\irr}{irr}

\DeclareMathOperator{\Min}{Min}

\newcommand{\quash}[1]{}  
\newcommand{\an}{\mathrm{an}}

\newcommand{\cycl}{\mathrm{cycl}}

\newcommand{\new}{\mathrm{new}}

\newcommand{\Iw}{\mathrm{Iw}}
\newcommand{\unr}{\mathrm{unr}}
\newcommand{\ord}{\mathrm{ord}}

\newcommand{\rig}{\mathrm{rig}}

\newcommand{\cl}{\mathrm{cl}}

\newcommand{\tri}{\mathrm{tri}}
\newcommand{\Ext}{\mathrm{Ext}}

\newcommand{\Rkis}{R_{\bar\rho}^{\square,\psi}(\w,\t)}

\DeclareMathOperator{\GL}{GL}

\DeclareMathOperator{\Ind}{Ind}
\DeclareMathOperator{\Proj}{Proj}
\DeclareMathOperator{\univ}{univ}

\DeclareMathOperator{\I}{I}

\DeclareMathOperator{\tr}{Tr}

\DeclareMathOperator{\crr}{cr}

\DeclareMathOperator{\co}{co}

\begin{document}
	\title{Irreducible components of crystabelline deformation rings:
		the wildly ramified case}
	\author{Jiawei An} 
	\address{Jiawei An, Morningside Center of Mathematics,
		Chinese Academy of Sciences, 55 Zhongguancun East Road, Haidian District, Beijing 100190,
		China.}
	\email{jiaweian@amss.ac.cn}
	
	\begin{abstract}
		Let $p\geq11$ be a prime, and let
		$\bar\rho:\Gal(\bar\QQ_p/\QQ_p)\rightarrow\GL_2(\bar\FF_p)$
		be a reducible and very generic representation.
		In this paper, we use ghost series \cite{Lghost2} to study
		the irreducible components of Kisin's potentially crystalline
		deformation rings of $\bar\rho$.
		For wildly ramified principal series inertial types and
		arbitrary regular Hodge--Tate weights, we distinguish
		the components by the crystabelline slopes on their generic
		fibers and determine the total number of irreducible components.
		Combining this with an explicit formula for the total
		Hilbert--Samuel multiplicity of the special fiber
		obtained from the Breuil--M\'ezard conjecture,
		we compute the  Hilbert--Samuel multiplicity of the special fiber of each
		component. Every such multiplicity is one when $\bar\rho$ is non-split, and is either one or two when $\bar\rho$ is split, according to whether the generic fiber of the corresponding component parametrizes
		reducible or irreducible Galois representations.
	\end{abstract}
	\subjclass[2020]{11F33 (primary), 11F85
		(secondary).} \keywords{Kisin's deformation rings;
		Breuil--M\'ezard multiplicities; irreducible components
		ghost series.}
	\maketitle
	
	\setcounter{tocdepth}{1}
	\tableofcontents
	\section{Introduction}
	Let $p\geq5$ be a prime.
	One of the major breakthroughs in the $p$-adic Langlands program in dimension two is the proof of the
	\emph{Breuil--M\'ezard conjecture}
	(Theorem~\ref{thm-BM-intro}) through work of
	Kisin, Pa\v{s}k\=unas, and others (see
	\cite[\S~1]{HT-BMconj} for further references).
	The conjecture expresses the Hilbert--Samuel multiplicities
	of the special fibers of Kisin's
	\emph{potentially semistable (or crystalline) deformation rings}
	in terms of the Jordan--H\"older multiplicities in the mod-$p$
	reductions of certain locally algebraic representations
	of $\GL_2(\ZZ_p)$.
	The resulting control of multiplicities plays
	an important role in Taylor--Wiles--Kisin patching arguments
	and their applications to modularity lifting theorems.
	
	In this paper, we study a finer problem of determining the
	Hilbert--Samuel multiplicity of the special fiber of
	\emph{each irreducible component} of a \emph{potentially crystalline
		deformation ring}. We prove that, in certain cases, each such
	multiplicity is either one or two. 
	
	\subsection{Breuil--M\'ezard multiplicities of deformation rings}
	Let $E/\QQ_p$ be a sufficiently large finite extension, with ring
	of integers $\calO$, uniformizer $\varpi$, and residue field $\FF$.
	We use the convention that the $p$-adic cyclotomic character
	$\chi_{\cycl}$ has Hodge--Tate weight $-1$, and write $\omega$
	for its reduction modulo $\varpi$.
	
	Let
	$\bar\rho:\Gal(\bar\QQ_p/\QQ_p)\rightarrow\GL_2(\FF)$
	be a continuous representation, and let $R_{\bar\rho}^{\square}$
	be its universal framed deformation ring.
	A \emph{$p$-adic Hodge type} $(\w,\t,\psi)$ consists of a pair of integers $\w=(k_1,k_2)$ with $k_1<k_2$, an \emph{inertial type} $\t$, and a continuous character
	$\psi:\Gal(\bar\QQ_p/\QQ_p)\rightarrow\calO^\times$ satisfying
	$
	\psi|_{\I_{\QQ_p}}
	=\chi_{\cycl}^{-k_1-k_2-1}\det\tau$; see \S~\ref{subsection-Kisin-rings}.
	Kisin constructed a reduced, $\calO$-flat quotient
	$R_{\bar\rho}^{\square,\psi,\operatorname{cr}}(\w,\t)$
	of $R_{\bar\rho}^{\square}$ parametrizing potentially crystalline
	lifts of $\bar\rho$ with Hodge--Tate weights $\w$, inertial
	type $\t$, and determinant $\psi\chi_{\cycl}$. We call this ring $R$ the \emph{potentially 
		crystalline deformation ring} of type $(\w,\t,\psi)$ and we assume that $R\neq0$.
	Moreover, Kisin (\cite{kisin-deform}) proved that $R$ is an equidimensional, complete Noetherian local
	$\calO$-flat algebra, and $R[1/\vp]$ is regular.
	The \emph{Breuil--M\'ezard multiplicity} of $R$ is defined to be the Hilbert--Samuel multiplicity  $e(R/\varpi)$ of the
	special fiber. 
	Let $\sigma(\w,\t)^{\operatorname{cr}}$ be the finite dimensional irreducible $E$-representation of $\GL_2(\ZZ_p)$ attached to $(\w,\t)$, and $W(\bar\rho)$ be the set of \emph{Serre weights} of $\bar\rho$; see \S~\ref{subsection-BM-conj} for details.
	\begin{theorem}[Breuil--M\'ezard conjecture]
		\label{thm-BM-intro}
		There are integers $\mu_\sigma(\bar\rho)\geq0$,
		depending only on $\bar\rho$ and $\sigma$, such that for any $p$-adic Hodge type $(\w,\t,\psi)$, the following holds
		\[
		e\bigl(R/\varpi\bigr)
		=
		\sum_{\sigma\in W(\bar\rho)}
		[\bar\sigma(\w,\t)^{\operatorname{cr},\operatorname{ss}}:\sigma]\,
		\mu_\sigma(\bar\rho),
		\]
		where $[\bar\sigma(\w,\t)^{\operatorname{cr},\operatorname{ss}}:\sigma]$ is the multiplicity of $\sigma$ occurring in the semisimplification of the mod-$\vp$ reduction of $\sigma(\w,\t)^{\operatorname{cr}}$.
	\end{theorem}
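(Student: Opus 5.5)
This statement is the Breuil--M\'ezard conjecture for $\GL_2(\QQ_p)$, a deep known theorem (Kisin, Pa\v{s}k\=unas, Hu--Tan and others) that the paper cites rather than reproves. The natural strategy is Pa\v{s}k\=unas' approach through the $p$-adic Langlands correspondence, which I would follow.

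\emph{Setup.} Let $P$ be a projective envelope of $\pi^\vee$ in the category $\mathfrak{C}(\calO)$ of compact $\calO[[\GL_2(\ZZ_p)]]$-modules with central character $\psi$. Here $\pi$ runs over the (at most three) irreducible $\GL_2(\QQ_p)$-representations in the block attached to $\bar\rho$. By Pa\v{s}k\=unas' results, $\widetilde E=\operatorname{End}(P)$ is identified, after passing to the block and the framing, with the pseudo-deformation ring $R^{\ps,\psi}$ or with $R_{\bar\rho}^{\psi}$. Moreover, Colmez's Montreal functor $\check{\mathbf V}$ sends $P$ to the universal deformation.

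\emph{Main steps.}
\begin{enumerate}
\item For any locally algebraic type $\sigma=\sigma(\w,\t)^{\operatorname{cr}}$ with a $\GL_2(\ZZ_p)$-stable lattice $\sigma^\circ$, define
\[
M(\sigma^\circ)=\Hom_{\calO[[K]]}^{\mathrm{cont}}(P,(\sigma^{\circ})^\vee)^\vee .
\]
This is a finitely generated $R$-module. It is Cohen--Macaulay, because $P$ is projective over $\calO[[K]]$, which uses Pa\v{s}k\=unas' projectivity theorem.
\item Show that the action of $R_{\bar\rho}^{\square,\psi}$ on $M(\sigma^\circ)$ factors through $R^{\square,\psi,\operatorname{cr}}(\w,\t)$. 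Show also that $M(\sigma^\circ)[1/\varpi]$ is locally free of rank one on the regular generic fiber. The key input is the compatibility of $\check{\mathbf V}$ with locally algebraic vectors: Colmez's and Emerton's local--global compatibility at the level of points, together with Berger--Breuil and Colmez--Dospinescu density of crystabelline points.
\item This gives $e(R/\varpi)=e(M(\sigma^\circ)/\varpi)$, since the cycle of $M$ equals that of $R$. By exactness of $M$ in $\sigma^\circ$, the right-hand side equals $\sum_\sigma [\bar\sigma^{ss}:\sigma]\, e(M(\sigma)),$ where each $e(M(\sigma))$ for a Serre weight is additive. Set $\mu_\sigma(\bar\rho):=e(M(\sigma))$. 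This is independent of $(\w,\t)$, and it vanishes unless $\sigma\in W(\bar\rho)$.
\end{enumerate}

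\emph{Main obstacle.} The hard step is step 2, and in particular its support statement for non-generic $\bar\rho$, where $\bar\rho$ is a twist of an extension of $1$ by $\omega$ or $\bar\rho$ is scalar. There the blocks have several constituents and $R^{\ps}$ is not the framed ring. For these cases I would follow Pa\v{s}k\=unas for the generic and many special cases, Hu--Tan for the residual cases, and Pa\v{s}k\=unas--Tung for $p=2,3$. In this paper $\bar\rho$ is very generic, so only the generic block is needed, and there the argument above goes through directly.
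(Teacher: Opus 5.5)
The paper does not prove this statement. It quotes it from Kisin, Pa\v{s}k\=unas and Hu--Tan (see \cite[\S~1]{HT-BMconj}), and adds only $\mu_\sigma(\bar\rho)=1$ for $\sigma\in W(\bar\rho)$ from \cite[Proposition~1.1.3(2)]{EG1}. Your Steps 1--3 are a fair outline of Pa\v{s}k\=unas' argument, but that argument requires $\operatorname{End}_{G_{\QQ_p}}(\bar\rho)=\FF$.

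\textbf{The gap.} Your closing claim, that here only the generic block is needed and ``the argument above goes through directly'', is false. The paper needs the theorem for the \emph{split} very generic $\bar\rho_0=\bar\chi_1\oplus\bar\chi_2$. This is Lemma~\ref{lem-type-Serre-wt}(3) in the split case, and it underlies the relation $e_0=e_1+e_2$ on which all of \S~\ref{section-computing-BM-mult-of-components} rests.
\begin{itemize}
\item For $\bar\rho_0$ there is no universal unframed deformation ring.
\item The block attached to $\bar\rho_0$ is the same block $\{\pi_1,\pi_2\}$ as for $\bar\rho_1$ and $\bar\rho_2$.
\item If $P$ is the projective envelope of $\pi_j^\vee$, then $\operatorname{End}(P)$ is identified with $R^{\ps}\cong R^{\psi}_{\bar\rho_i}$. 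Moreover, $\check{\mathbf V}(P)$ is the universal deformation of a \emph{non-split} $\bar\rho_i$, not a deformation of $\bar\rho_0$.
\end{itemize}
So your $M(\Theta)$ is a module over $R_i(\w,\t)$. Step 2, which asks it to live on $R^{\square,\psi}_{\bar\rho_0}(\w,\t)$ with rank one on the generic fibre, does not apply.

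\textbf{What is missing.} You need a comparison of the split representation with its two non-split companions. This is exactly Hu--Tan's theorem for non-scalar split $\bar\rho$, so it is not a residual special case to be set aside. Alternatively, use Pa\v{s}k\=unas' versal-ring argument \cite[\S\S~5--7]{paskunas-2adic}:
\begin{itemize}
\item write $R^{\ver}=R^{\ps}[\![x,y]\!]/(xy-c)$;
\item describe $R^{\ver}(\w,\t)$ through the ideals $I_{\irr}$, $I_1$, $I_2$, as in Proposition~\ref{prop:kisin rings relation};
\item use the cycle map $\alpha$ of Remark~\ref{rmk-6} to get $e(R^{\ver}(\w,\t)/\vp)=e(R_1(\w,\t)/\vp)+e(R_2(\w,\t)/\vp)$.
\end{itemize}
This gives $\mu_\sigma(\bar\rho_0)=\mu_\sigma(\bar\rho_1)$ and $\mu_{\sigma^{\co}}(\bar\rho_0)=\mu_{\sigma^{\co}}(\bar\rho_2)$. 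One then passes to the framed ring by formal smoothness, as in Lemma~\ref{lem-1}.

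\textbf{Two smaller points.}
\begin{itemize}
\item Even in the non-split case, $M(\Theta)$ is a module over $\operatorname{End}(P)\cong R^{\psi}_{\bar\rho}$, not over $R^{\square,\psi}_{\bar\rho}$. So the same formal-smoothness step is needed there too.
\item Cohen--Macaulayness of $M(\Theta)$ does not follow from projectivity of $P$ alone; you would also need it for the $M(\sigma)$. You do not actually need it, though. $\calO$-torsion-freeness together with generic rank one on $\Spec R(\w,\t)[1/\vp]$ already gives the equality of cycles mod $\vp$.
\end{itemize}
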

	We refer to \cite{EG1} for the explicit values of $\mu_{\sigma}(\bar\rho)$. Using this result, the \emph{total} Breuil--M\'ezard multiplicity $e(R/\vp)$ can be computed
	by determining the Jordan--H\"older multiplicities of
	$\bar\sigma(\w,\t)^{\operatorname{cr},\operatorname{ss}}$; see Proposition~\ref{prop-halo-mult} for an \emph{explicit} formula in certain cases.
	The main objective of this paper is to refine this calculation by studying
	the following question.
	\begin{question}
		\label{question-1}
		For a minimal prime $\p\in\Min(R)$, what is the
		Breuil--M\'ezard multiplicity
		$
		e\bigl(R/(\p,\varpi)\bigr)
		$
		of the corresponding irreducible component?
	\end{question}
	Note that the regularity of $R[1/\varpi]$ yields
	the following additivity formula for Breuil--M\'ezard
	multiplicities (Lemma~\ref{lem-inequality-of-mult}):
	\begin{equation}
		\label{eq-00}
		e(R/\varpi)
		=
		\sum_{\p\in\Min(R)}e\bigl(R/(\p,\varpi)\bigr).
	\end{equation}
	Thus Question~\ref{question-1} asks how the total Breuil--M\'ezard multiplicity $e(R/\vp)$ 
	is distributed among the special fibers of irreducible components of $R$.
	
	\subsection{Statement of the main results}
	\label{subsection-introduction-main-results} Fix a 
	$\bar\rho:\Gal(\bar\QQ_p/\QQ_p)\rightarrow\GL_2(\FF)$ and a $p$-adic Hodge type $(\w,\t,\psi)$. We assume that $\tau=\theta_1\oplus\theta_2$ is a principal series inertial
	type, where $\theta_i:\ZZ_p^\times\longrightarrow E^\times$
	are finite order characters, viewed as characters of the inertial subgroup
	$\I_{\QQ_p}$ via local class field theory. Put $R=R^{\square,\psi,\operatorname{cr}}_{\bar\rho}(\w,\t)$.
	\begin{definition}
		We call $\bar\rho$
		\emph{reducible and very generic} if
		\[
		\bar\rho\simeq
		\begin{pmatrix}
			\unr(\bar\beta_1)\omega^{a+b+1}&*\\
			0&\unr(\bar\beta_2)\omega^b
		\end{pmatrix},
		\qquad
		2\leq a\leq p-5,\quad 0\leq b\leq p-2,
		\]
		where $\bar\beta_1,\bar\beta_2\in\FF^\times$.
		We call $\tau$ \emph{wildly ramified} if the least integer
		$m\geq1$ such that $\theta_1\theta_2^{-1}$ factors through
		$(\ZZ/p^m\ZZ)^\times$ satisfies $m\geq2$.
	\end{definition}

	\begin{theorem}[Theorem~\ref{thm-BM=1-non-split}, Theorem~\ref{thm-main-thm-split}]
		\label{thm-Main-results}
		Assume that $p\geq11$.
		Let $\bar\rho$ be reducible and very generic, and let
		$(\w,\t,\psi)$ be a $p$-adic Hodge type such that $\t$
		is a wildly ramified principal series type.
		Put
		$R=R_{\bar\rho}^{\square,\psi,\operatorname{cr}}(\w,\t)$.
		For every $\p\in\Min(R)$, write $R(\p):=R/\p$.
		Then
		\[
		e(R(\p)/\varpi)=
		\begin{cases}
			1,&\text{if $\bar\rho$ is non-split},\\
			1\text{ or }2,&\text{if $\bar\rho$ is split}.
		\end{cases}
		\]
		Moreover, if $\bar\rho$ is split, then
		$e(R(\p)/\varpi)=1$ (resp.\ $2$) if and only if some (equivalently, every)
		closed point of $\Spec R(\p)[1/\varpi]$ corresponds
		to a reducible (resp. an irreducible) Galois representation.
	\end{theorem}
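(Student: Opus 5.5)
The strategy is a counting argument built on the additivity formula \eqref{eq-00}. Each component has $e(R(\p)/\varpi)\geq 1$, since $R(\p)$ is $\calO$-flat of positive relative dimension and so $R(\p)/\varpi\neq 0$. It therefore suffices to compute two quantities exactly: the total $e(R/\varpi)$, and the number $|\Min(R)|$, refined by reducible versus irreducible generic fibre. If the number of components matches the total multiplicity (non-split case), or matches it once irreducible components are counted twice (split case), then every inequality must be an equality. For the split case I also need a separate lower bound $e(R(\p)/\varpi)\geq 2$ on the components whose generic fibre is irreducible.

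\emph{Step 1 (total multiplicity).} Use Theorem~\ref{thm-BM-intro} with the values $\mu_\sigma(\bar\rho)$ from \cite{EG1}, together with the explicit Jordan--H\"older count of Proposition~\ref{prop-halo-mult}. For very generic reducible $\bar\rho$ we have $W(\bar\rho)=\{\sigma_1\}$ or $\{\sigma_1,\sigma_2\}$: one weight when $\bar\rho$ is non-split (generic case), and two weights, each with $\mu=1$, when $\bar\rho$ is split. This gives $e(R/\varpi)$ as an explicit function $N(\w,\t)$ of the weights and the conductor exponent $m\geq 2$. In the split case it is roughly twice the non-split value.

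\emph{Step 2 (counting components via slopes).} Closed points of $\Spec R[1/\varpi]$ are potentially crystalline lifts $\rho$, which are crystabelline since $\t$ is principal series. They are determined up to twist by a pair of Frobenius eigenvalues $(\a_1,\a_2)$ on $D_{\cris}$, with $v(\a_1)+v(\a_2)$ fixed. Because $R[1/\varpi]$ is regular, its connected components coincide with its irreducible components. I will show that the slope $v(\a_1)$, or more precisely the \emph{unordered pair} of slopes attached to a refinement compatible with $\theta_1,\theta_2$, is locally constant on $\Spec R[1/\varpi]$. This gives an invariant that separates components. The ghost series results of \cite{Lghost2} say that these slopes, as $\rho$ varies over lifts of a reducible very generic $\bar\rho$ with a wildly ramified character, take exactly the values of the ghost slopes at the corresponding weight-character point. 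These slopes are explicit and pairwise distinct in the relevant range. For each admissible slope value I will then construct a lift. Ordinary and reducible lifts are built from explicit extensions of characters. Irreducible lifts are produced via local--global compatibility or the trianguline variety, using the fact that each ghost slope is actually attained. Finally I will show that the locus with a given slope is connected, i.e.\ irreducible. I expect to argue this by identifying it with a quotient of a smooth rigid space fibred over a disc, using the vanishing of the relevant $H^2$ under genericity. The outcome is $|\Min(R)|=N(\w,\t)$ in the non-split case. In the split case it is $r+i$, where $r$ counts reducible components, $i$ counts irreducible ones, and $r+2i=N(\w,\t)$.

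\emph{Step 3 (conclusion and the split lower bound).} In the non-split case, \eqref{eq-00} gives $N=\sum_\p e(R(\p)/\varpi)\geq|\Min(R)|=N$, so every term equals $1$. In the split case, take $\p$ with irreducible generic fibre. The generic irreducible lift $\rho$ reduces, through suitable lattices, to both non-split extensions: one with $\unr(\bar\beta_1)\omega^{a+b+1}$ as sub and one with $\unr(\bar\beta_2)\omega^b$ as sub. Hence the special fibre $R(\p)/\varpi$ meets both loci, on which the two Serre weights $\sigma_1,\sigma_2$ are detected. Via the cycle version of Breuil--M\'ezard, the cycle of $R(\p)/\varpi$ is $\sum_\sigma n_\sigma\mathcal{C}_\sigma$ with $n_\sigma\geq 0$, and both $n_{\sigma_1}$ and $n_{\sigma_2}$ are positive, so $e\geq 2$. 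Combined with $r+2i=N$ and $e\geq1$ on the reducible components, this forces equality everywhere. The characterisation by closed points is then automatic, because reducibility is constant on a component: it is determined by the slope invariant.

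The main obstacle is Step 2. It needs the \emph{exact} count of components, and this requires two things: the ghost-series input of \cite{Lghost2}, to know precisely which slopes occur, and a connectedness statement for each slope stratum. I would first try to prove connectedness by exhibiting each stratum as the image of a connected family of $(\varphi,\Gamma)$-modules of the form $\mathcal{R}(\delta_1)$-by-$\mathcal{R}(\delta_2)$, parametrised by an irreducible space of parameters.
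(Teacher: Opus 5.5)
Your split case has a genuine gap. Everything there hinges on the lower bound $e(R(\p)/\varpi)\geq 2$ for components with irreducible generic fibre, and the argument you give for it does not work.

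An $E'$-point $\rho$ of $\Spec R(\p)[1/\varpi]$ does have other lattices reducing to $\bar\rho_1$ and to $\bar\rho_2$. But this only produces points on the generic fibres of the deformation spaces of $\bar\rho_1$ and $\bar\rho_2$. It produces no point of $\Spec R(\p)/\varpi$, which parametrizes deformations of the split $\bar\rho_0$ over $\FF$-algebras, and it says nothing about which Breuil--M\'ezard cycles occur in $Z(R(\p)/\varpi)$. Moreover, the cycle identity concerns $R$, not a single component, so even writing $Z(R(\p)/\varpi)=n_\sigma C(\sigma)+n_{\sigma^{\co}}C(\sigma^{\co})$ needs its own argument.

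What is missing is a way to compare the integral structure of a component of the $\bar\rho_0$-ring with components of the non-split rings. The paper gets this from Pa\v{s}k\=unas's presentation $R^{\ver}=R^{\ps}[\![x,y]\!]/(xy-c)$ with $R^{\ps}\simeq R_i$, together with the description of the Kisin rings via $I_{\irr},I_1,I_2$ (Proposition~\ref{prop:kisin rings relation}). This yields three facts:
\begin{itemize}
\item For $\p$ in the irreducible locus, $R^{\ver}(\p)\simeq (R_i(\w,\t)/\tp_i)[\![x,y]\!]/(xy-\bar c)$, where $R_i(\w,\t)/\tp_i$ is a power series ring by the non-split case. This gives the \emph{upper} bound $e\leq 2$ (Lemma~\ref{lem:mult leq 2}).
\item For $\p$ in the reducible locus, $R^{\ver}(\p)\simeq (R_i(\w,\t)/\tp_i)[\![z_i]\!]$, so $e=1$.
\item The map $\p\mapsto\tp_i$ injects components into those of $R_1(\w,\t)$ and $R_2(\w,\t)$ compatibly with slopes, giving an \emph{upper} bound on $\#\Min$.
\end{itemize}
The identity $e_0=n_{\irr}(\bar\rho_i)+e'_0$ then squeezes everything to equality. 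A direct proof of $e\geq 2$ exists (Remark~\ref{rmk-5}), but it needs the same isomorphism plus Pa\v{s}k\=unas's result that the special fibre of $R_i(\w,\t)$ lies in the reducible locus. So your route cannot bypass this machinery.

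In Step~2, the connectedness of slope strata that you call the main obstacle is unnecessary, and you give no argument for it. In the non-split case, $\#\Min(R)\leq e(R/\varpi)$ is automatic from \eqref{eq-00}. So you only need the lower bound $\#\Min(R)\geq\#\{\text{slopes occurring}\}$ and the equality $\#\{\text{slopes occurring}\}=e(R/\varpi)$.

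The lower bound needs the \emph{ordered} invariant: $k_2$ minus $v_p$ of the Frobenius eigenvalue on the $\theta_1$-isotypic line, interpolated analytically as in Proposition~\ref{prop-slope-map}. The unordered pair identifies $\goths$ with $k_2-k_1-\goths$, so it would not separate the slope-$0$ and slope-$(k_2-k_1)$ components when both occur. Local constancy then comes from finiteness of the slope set, not from analyticity alone.

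The equality does not follow from the slopes being ``explicit and pairwise distinct''. The paper checks it by locating the $e$th and $(e+1)$th ghost slopes relative to $k_2-k_1$ in every congruence case, and by treating the boundary slopes separately (Propositions~\ref{prop-crystabelline-slopes} and~\ref{prop-BM-mult=halo-mult}).

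Similarly, in the split case, let $r'$ and $i'$ be the numbers of boundary and interior slopes occurring. If you had a valid bound $e\geq 2$, the slope lower bounds $r\geq r'$ and $i\geq i'$, together with $e(R/\varpi)=r'+2i'$ (Corollary~\ref{cor-split-compare-mult}), would already force equality. The exact component count would then be an output of the argument, not an input.
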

	\begin{remark}
		\begin{enumerate}
			\item By Lemma~\ref{lem comm alg2}, whenever $e(R(\p)/\varpi)=1$,
			the ring $R(\p)$ is simply a formal power series ring over $\calO$. However, our method still cannot determine how these components intersect in $R$.
			\item 
			We also determine the number of irreducible components
			of $R$.
			If $\bar\rho$ is non-split, then
			$
			\#\Min(R)=e(R/\varpi)$ (Theorem~\ref{thm-BM=1-non-split}).
			If $\bar\rho$ is split, then
			$
			2\#\Min(R)-e(R/\varpi)\in\{0,1,2\}$; see
			Corollary~\ref{cor-split-compare-mult} and
			Theorem~\ref{thm-main-thm-split} for explicit formulas.
					\end{enumerate}
			\end{remark}
			
			\begin{remark}
	We emphasize that Theorem~\ref{thm-Main-results} imposes no further restriction
			on the Hodge--Tate weights $k_1<k_2$. The assumptions on $\bar\rho$ and the condition $p\geq11$ are inherited from \cite{Lghost2}, as we use the main results of loc. cit to study the crystabelline slopes. 
			According to \cite{Lghost2}, the behavior of crystabelline slopes
			for a principal series type $\tau$ is \emph{much easier} to analyze
			in the wildly ramified case than in the tamely ramified case;
			see Remarks~\ref{rmk-2} and~\ref{rmk-7} for further discussion.
			There is ongoing work dealing with general principal series types $\tau$ under the same assumptions on $p$ and $\bar\rho$.
	\end{remark}
	Theorem~\ref{thm-Main-results} also provides a \emph{multiplicity-one property
		for Breuil--M\'ezard cycles}.
	Recall the cycle formulation of the Breuil--M\'ezard
	conjecture in \cite{EG1}:
	\[
	Z(R/\varpi)
	=
	\sum_{\sigma\in W(\bar\rho)}
	[\bar\sigma(\w,\t)^{\operatorname{ss}}:\sigma]\,
	C(\sigma),
	\]
	where $Z(R/\varpi)$ denotes the cycle associated with $R/\vp$ and $C(\sigma)$ is the Breuil--M\'ezard cycle
	associated with the Serre weight $\sigma$.
	In our setting, $W(\bar\rho)$ consists of one weight $\sigma$ when
	$\bar\rho$ is non-split and two companion weights $\sigma$ and $\sigma^{\co}$ when
	$\bar\rho$ is split; see
	\S~\ref{subsection-explicit-formula-of-BM-mult}.
	Theorem~\ref{thm-Main-results}, together with the comparison
	of cycles in Remarks~\ref{rmk-5} and~\ref{rmk-6}, gives
	the following refinement.
	
	\begin{corollary}
		\label{cor-BM-cycles-intro}
		Under the hypotheses of Theorem~\ref{thm-Main-results},
		for every $\p\in\Min(R)$ we have
		\[
		Z\bigl(R/(\p,\varpi)\bigr)
		=
		n_\sigma(\p)\,C(\sigma)
		+	n_{\sigma^{\co}}(\p)\,C(\sigma^{\co}),\quad n_\sigma(\p), n_{\sigma^{\co}}(\p)\in\{0,1\}.
		\]
		Thus each Breuil--M\'ezard cycle occurs in the special
		fiber of each irreducible component of $R$ at most once.
	\end{corollary}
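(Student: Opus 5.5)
The plan is to deduce Corollary~\ref{cor-BM-cycles-intro} from Theorem~\ref{thm-Main-results} together with the cycle form of the Breuil--M\'ezard conjecture, using positivity of cycles. The cycle identity $Z(R/\varpi)=\sum_{\sigma}[\bar\sigma(\w,\t)^{\operatorname{ss}}:\sigma]\,C(\sigma)$ holds in the group of $(\dim R-1)$-cycles on $\Spec R^{\square}_{\bar\rho}/\varpi$. As in \eqref{eq-00}, regularity of $R[1/\varpi]$ gives the cycle-level additivity $Z(R/\varpi)=\sum_{\p\in\Min(R)}Z(R/(\p,\varpi))$; the proof of Lemma~\ref{lem-inequality-of-mult} should apply verbatim, with lengths at the minimal primes of $R/\varpi$ in place of multiplicities.

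Next, each $C(\sigma)$ is an effective cycle, and the cycles $C(\sigma)$ for $\sigma\in W(\bar\rho)$ have disjoint supports. For non-split $\bar\rho$ there is only one weight, so $C(\sigma)$ is a single irreducible component with multiplicity $\mu_\sigma(\bar\rho)=1$ (by \cite{EG1}, in the very generic range). For split $\bar\rho$, $C(\sigma)$ and $C(\sigma^{\co})$ are distinct effective cycles; these are the comparisons recorded in Remarks~\ref{rmk-5} and~\ref{rmk-6}. Since $Z(R/(\p,\varpi))$ is effective and is dominated by $Z(R/\varpi)$, which is supported on $\operatorname{Supp}C(\sigma)\cup\operatorname{Supp}C(\sigma^{\co})$, it can be written as $Z(R/(\p,\varpi))=n_\sigma(\p)C(\sigma)+n_{\sigma^{\co}}(\p)C(\sigma^{\co})$. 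To get this form, I would need each $C(\sigma)$ to be (a multiple of) a single irreducible cycle, so that any effective subcycle supported there is an integer multiple of it. I expect this to follow from \cite{EG1}, where $C(\sigma)$ for generic reducible $\bar\rho$ is identified with the reduced special fiber of a crystalline deformation ring of Serre weight $\sigma$, which is a formal power series ring over $\FF$.

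Taking Hilbert--Samuel multiplicities then gives $e(R(\p)/\varpi)=n_\sigma(\p)e(C(\sigma))+n_{\sigma^{\co}}(\p)e(C(\sigma^{\co}))$. Each $e(C(\cdot))\ge1$ (in fact $=1$), and by Theorem~\ref{thm-Main-results} $e(R(\p)/\varpi)\le 2$ in all cases. In the non-split case the left side is $1$, so $n_\sigma(\p)=1$. In the split case the left side is at most $2$. The only way to get $n\ge2$ would be a value such as $2C(\sigma)$, and this is excluded by the finer statement of the theorem. When $e=2$, the component parametrizes irreducible representations, and the proof of Theorem~\ref{thm-main-thm-split} shows that both companion weights appear, that is, $n_\sigma(\p)=n_{\sigma^{\co}}(\p)=1$.

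The main obstacle is this last exclusion in the split case, namely ruling out $Z(R/(\p,\varpi))=2C(\sigma)$. My first approach would be a global count. Summing over $\p$ gives $\sum_\p n_\sigma(\p)=[\bar\sigma^{\operatorname{ss}}:\sigma]$. Combining this with the explicit formulas in Corollary~\ref{cor-split-compare-mult}, which say exactly how many components have $e=1$ and how many have $e=2$, together with the slope-theoretic matching of reducible components to $\sigma$ or $\sigma^{\co}$, the counts force each $n$ to lie in $\{0,1\}$.
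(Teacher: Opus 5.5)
Your reduction is sound up to the step you yourself flag. Cycle-level additivity over $\Min(R)$, effectivity, and $\mu_\sigma(\bar\rho)=\mu_{\sigma^{\co}}(\bar\rho)=1$ make $C(\sigma)$ and $C(\sigma^{\co})$ prime cycles of multiplicity one. Hence $Z(R/(\p,\vp))=n_\sigma(\p)C(\sigma)+n_{\sigma^{\co}}(\p)C(\sigma^{\co})$ with $n_\sigma(\p)+n_{\sigma^{\co}}(\p)=e(R(\p)/\vp)$. This settles the non-split case and every component with $e(R(\p)/\vp)=1$.

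The genuine gap is excluding $2C(\sigma)$ and $2C(\sigma^{\co})$ when $e(R(\p)/\vp)=2$, and neither of your arguments does this.

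\begin{enumerate}
\item The proof of Theorem~\ref{thm-main-thm-split} in \S~\ref{finish-the-proof} is a pure count of Hilbert--Samuel multiplicities. It shows $e(R(\p)/\vp)=2$ and says nothing about which weights occur, so it does not establish that ``both companion weights appear''.
\item The global count cannot work either. Let $u,v,w$ be the numbers of irreducible-type components with $(n_\sigma,n_{\sigma^{\co}})=(2,0),(1,1),(0,2)$. The totals $[\bar\sigma(\w,\tau)^{\operatorname{ss}}:\sigma]$ and $[\bar\sigma(\w,\tau)^{\operatorname{ss}}:\sigma^{\co}]$, the number of components of each kind, and the matching of reducible components only force $u=w$. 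Replacing two components with cycle $C(\sigma)+C(\sigma^{\co})$ by one with $2C(\sigma)$ and one with $2C(\sigma^{\co})$ changes none of these invariants. So once there are two irreducible-type components, counting cannot decide.
\end{enumerate}

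What is missing is a local description of the special fiber of each irreducible-type component. The paper supplies this in Remarks~\ref{rmk-5} and~\ref{rmk-6}.

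By Proposition~\ref{prop-key-irreducible}, $R^{\ver}(\p)\simeq(R_i(\w,\tau)/\tp_i)[\![x,y]\!]/(xy-\bar c)$, with $R_i(\w,\tau)/\tp_i$ a power series ring over $\calO$. By \cite[Theorem~7.3]{paskunas-2adic}, the special fiber of $R_i(\w,\tau)$ lies in the reducible locus. So $\bar c$ is nilpotent, hence zero, in the reduced ring $R_i(\w,\tau)/(\tp_i,\vp)$. Therefore $R^{\ver}(\p)/\vp\simeq\FF[\![T_1,\dots,T_n,x,y]\!]/(xy)$, and this persists after adjoining the framing variables (Lemma~\ref{lem-1}).

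Thus $Z(R/(\p,\vp))$ is a sum of two distinct prime cycles, each with coefficient one. This cannot equal $2C(\sigma)$ or $2C(\sigma^{\co})$, since those are single prime cycles with coefficient two. Your argument then gives $n_\sigma(\p)=n_{\sigma^{\co}}(\p)=1$.

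Alternatively, Remark~\ref{rmk-6} writes $Z(R^{\ver}(\p)/\vp)=\sum_i\alpha_i\bigl(Z(R_i(\w,\tau)/(\tp_i,\vp))\bigr)$ via Pa\v{s}k\=unas' cycle map. The non-split case says each $Z(R_i(\w,\tau)/(\tp_i,\vp))$ is the single Breuil--M\'ezard cycle of $\bar\rho_i$. Combining the two identifies the cycle of every component directly. This includes which weight a reducible-type component carries, which your ``slope-theoretic matching'' also leaves unproved.
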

	
	\begin{remark}
		\label{rmk-0}
		The corresponding results also hold for unframed
		deformation rings in the non-split case and for versal
		deformation rings in the split case.
		Indeed, the framed deformation rings are formally smooth
		over the corresponding unframed or versal rings.
		This passage induces a bijection on irreducible
		components and preserves their Breuil--M\'ezard
		multiplicities; see
		\S~\ref{subsection-versal-deformation-ring}
		and Lemma~\ref{lem-1}.
	\end{remark}
	\subsection{Overview of the proof}
	\label{subsection-introduction-proof}
	
	Keep the notation of \S~\ref{subsection-introduction-main-results}.
	Write $\calX=\Spf(R)^{\rig}$, and let $\calV$ be the universal
	Galois representation over $\calX$.
	In \S~\ref{section-calculating-crystabelline-slopes}, we first
	construct a rigid analytic map
	\[
	\lambda:\calX\rightarrow\AAA^{1,\an},
	\]
	which interpolates the (crystalline) Frobenius eigenvalue on the
	$\theta_1$-isotypic line of the potentially crystalline
	Dieudonn\'e module $\mathbb{D}_{\operatorname{pcrys}}(\calV)$;
	see Proposition~\ref{prop-slope-map}.
	For a closed point $x\in\calX$, the associated
	\emph{crystabelline slope} is defined by
	$\goths(x)=k_2-v_p(\lambda(x))$.
	Then the main results of \cite[\S~7]{Lghost2} allow us to explicitly compute the slopes
	occurring in $\calX$ using \emph{local ghost series}; see
	Proposition~\ref{prop-crystabelline-slopes}. As a consequence, we prove the following result.
	
	\begin{proposition}[{Proposition~\ref{prop-BM-mult=halo-mult}
			and Corollary~\ref{cor-split-compare-mult}}]
		\label{prop-mult=mult-intro}
		The number of distinct slopes occurring in $\calX$ is finite.
		Let $e'(\calX)$ denote this number.
		Then $e'(\calX)=e(R/\vp)$ if $\bar\rho$ is non-split, and
		$2e'(\calX)-e(R/\varpi)\in\{0,1,2\}$ if $\bar\rho$ is split
		(see Corollary~\ref{cor-split-compare-mult} for explicit formulas).
	\end{proposition}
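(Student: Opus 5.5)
The proof has two halves: compute the set of crystabelline slopes occurring in $\calX$ from the ghost series of \cite{Lghost2}, and compute $e(R/\varpi)$ from the Breuil--M\'ezard formula (Theorem~\ref{thm-BM-intro}). Then compare the two counts.

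\emph{Slopes.} Using the map $\lambda:\calX\to\AAA^{1,\an}$ of Proposition~\ref{prop-slope-map}, every closed point $x$ has a crystabelline slope $\goths(x)=k_2-v_p(\lambda(x))$. For a point $x$, the $\theta_1$-isotypic Frobenius eigenvalue determines a point on the trianguline (eigencurve-type) space of \cite[\S~7]{Lghost2} attached to $\bar\rho$, at weight $\w$ and character $\theta_1\otimes\theta_2$. Its slope is then one of the slopes of the local ghost series $G(w_\star,t)$ evaluated at the corresponding weight. This is the content of Proposition~\ref{prop-crystabelline-slopes}.

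Conversely, each such ghost slope is realized by some $x$. Here I would use the fact from \cite{Lghost2} that the Newton polygon of the characteristic power series agrees with the ghost Newton polygon; this produces a crystabelline lift with the prescribed slope. When $\t$ is wildly ramified, the weight point lies in a boundary annulus, i.e.\ $v_p(w_\star)$ is small. In that range the ghost slopes form an arithmetic progression, roughly of the form $\frac{i}{p^{m-2}(p-1)}$-type halo slopes. Only the finitely many slopes lying in $[k_1,k_2]$, i.e.\ with $0\le v_p(\lambda)\le k_2-k_1$ by weak admissibility, can occur. This gives finiteness of the slope set, and it gives $e'(\calX)$ as an explicit count: roughly the number of ghost zeros of the relevant degree below $k_2-k_1$, computed from the dimension formulas $d_k^{\Iw}$ of the ghost series.

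\emph{Multiplicities.} Proposition~\ref{prop-halo-mult} gives $e(R/\varpi)$ explicitly. It is obtained by decomposing $\bar\sigma(\w,\t)^{\crr,\mathrm{ss}}$, which is $\Sym^{k_2-k_1-1}$ twisted by a reduction of a principal series of level $p^m$, and counting the Jordan--H\"older constituents that are Serre weights of $\bar\rho$. In the non-split case $W(\bar\rho)=\{\sigma\}$ with $\mu_\sigma=1$. In the split case there are two companion weights $\sigma,\sigma^{\co}$.

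\emph{Comparison.} I would match the two counts term by term. In the non-split case, each occurrence of $\sigma$ should correspond bijectively to one ghost slope in the admissible interval, which gives $e'=e$. In the split case, $\sigma$ and $\sigma^{\co}$ occur with nearly equal multiplicity, each roughly equal to $e'$, and the boundary terms of the interval account for the discrepancy $2e'-e\in\{0,1,2\}$. These boundary slopes are the extreme ones, $0$ or $k_2-k_1$, which correspond to reducible (ordinary) points.

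The main obstacle is the combinatorial identification of these two independent counts: the Jordan--H\"older count of the mod $p$ reduction of a wildly ramified principal series tensored with $\Sym$, against the ghost-zero count at a boundary weight. I would first try to express both as the number of integers in an explicit interval determined by $a$, $b$, $m$ and $k_2-k_1$. Then I would check the endpoints carefully, since they are exactly where the split-case correction $\{0,1,2\}$ appears.
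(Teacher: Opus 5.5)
Your outline (ghost slopes from \cite[\S~7]{Lghost2}, $e(R/\varpi)$ from Proposition~\ref{prop-halo-mult}, then compare) is the paper's. However, the step identifying the slopes of $\calX$ with the ghost slopes lying in $[0,k_2-k_1]$ fails at the endpoints, and that is where the count is decided. The ghost-series input, Proposition~\ref{prop-crystabelline-slopes}(1), only governs slopes in the open interval $(0,k_2-k_1)$.

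Here is a concrete failure. Write $k=k_2-k_1+1$, $k-2=k_0+k_\bullet(p-1)$, $N=k_\bullet+(k-1)\frac{p^{m-1}-1}{p-1}+1$ and $C=(k-1)(p-1)p^{m-2}$, so that $C\,v_p(w_*)=k_2-k_1$. Take $\bar\rho$ non-split with $s_\e+a<p-1$, $s_\e\geq1$ and $k_0=s_\e-1$. Then $e=2N-2$ and $a_{e+1}=\eta_1+(N-1)(p-1)=(k-1)p^{m-1}$, so $\lambda^{(\e)}_{e+1}=C$ and the correction term vanishes. Thus the $(e+1)$th slope of $\NP(G^{(\e)}_\sigma(w_*,-))$ is exactly $k_2-k_1$. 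But $k_0\neq\{s_\e+a\}$, so $k_2-k_1$ does not occur in $\calX$, and your count would give $e+1$ instead of $e$. Likewise, for split $\bar\rho$ the slope $0$ occurs when $s'_\e=0$, although $G^{(\e)}_\sigma$ has no slope $0$.

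The boundary therefore needs a separate Galois-theoretic argument, which your proposal lacks. A point of slope $0$ (resp.\ $k_2-k_1$) is reducible, with crystabelline sub-character $\delta_1$ (resp.\ the first parameter $\delta'_1$ of the other, ordinary, triangulation). Comparing its reduction with $\bar\chi_1$ (or $\bar\chi_2$ when $\bar\rho$ is split) forces $s_\e=0$ (resp.\ $k_0=\{s_\e+a\}$), or the $\sigma^{\co}$-analogues. Conversely, one writes down non-split extensions of explicit crystabelline characters that lift $\bar\rho$.

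Second, the comparison you call the main obstacle is the real content, and your plan gives no mechanism for it. The paper locates an index rather than counting integers in an interval. Proposition~\ref{prop-halo-mult} gives $e=2N-j$, with $j\in\{0,1,2\}$ determined by the position of $k_0$ relative to $\eta_1,\eta_2$, so $e\equiv 2k_0+4-j\pmod{2p}$. The explicit Newton ratios of Definition~\ref{def-halo-slopes} then give, case by case, $\goths^{(\e)}_e\leq C\leq\goths^{(\e)}_{e+1}$. Moreover $\goths^{(\e)}_e=C$ exactly when $k_0=\{s_\e+a\}$, and $\goths^{(\e)}_1=0$ exactly when $s_\e=0$. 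Since the Newton ratios are strictly increasing, this and the boundary criteria show that the slopes of $\calX$ are precisely the first $e$ ghost slopes.

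For split $\bar\rho$ no new term-by-term matching is needed:
\begin{itemize}
\item $e=e_1+e_2$ by Lemma~\ref{lem-type-Serre-wt}(3);
\item $e_i=e'_i$ by the non-split case applied to $\bar\rho_1,\bar\rho_2$;
\item $\NP(G^{(\e)}_\sigma)$ and $\NP(G^{(\e)}_{\sigma^{\co}})$ agree away from slope $0$, so all three deformation spaces have the same interior slopes.
\end{itemize}
Then $2e'-e\in\{0,1,2\}$ follows from Lemma~\ref{lem-multiplicity-difference} and the boundary criteria.

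A minor slip: weak admissibility puts $v_p(\lambda(x))$ in $[k_1,k_2]$; it is $\goths(x)$ that lies in $[0,k_2-k_1]$.
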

	
	Here we use Theorem~\ref{thm-BM-intro} to compute $e(R/\vp)$
	explicitly.
	The finiteness in Proposition~\ref{prop-mult=mult-intro} ensures that the slope is constant
	on each connected component of $\calX$, and thus $e'(\calX)$ provides a lower bound for the number of
	connected components of $\calX$; see
	Corollary~\ref{cor-naive-lower-bound}.
	By the regularity of $R[1/\vp]$, $\#\Min(R)$ is equal to the number of connected components of $\calX$; see \S~\ref{subsection-components-and-BM-mult}.
	Together with \eqref{eq-00}, we have
	\begin{equation}
		\label{eq-01}
		e(R/\varpi)
		=\sum_{\p\in\Min(R)}e(R(\p)/\varpi)
		\geq\#\Min(R)\geq e'(\calX).
	\end{equation}
	Theorem~\ref{thm-Main-results} for non-split $\bar\rho$ now
	follows, since $e(R/\vp)=e'(\calX)$ in this case by
	Proposition~\ref{prop-mult=mult-intro}. In particular, for non-split $\bar\rho$, we have 
	\begin{equation}
		\label{eq-04}
		e(R/\vp)=e'(\calX)=\#(\Min(R)).
	\end{equation}
	
	\begin{remark}
		A feature of our approach is that we distinguish the
		irreducible components of $R$ by the crystabelline slopes
		on their generic fibers.
		More precisely, for each $\p\in\Min(R)$, there is a
		well-defined slope $\goths(\p)$ such that all closed
		points of $\Spf(R/\p)^{\rig}$ have crystabelline slope
		$\goths(\p)$; see Remark~\ref{rmk-1} for further discussion.
	\end{remark}
	
	For split $\bar\rho$, we combine the non-split case of
	Theorem~\ref{thm-Main-results} with results of
	\cite{paskunas-2adic,HT-BMconj} relating the Kisin deformation
	rings for split and non-split residual representations,
	and again use a counting argument to complete the proof. We will prove the versal version of Theorem~\ref{thm-Main-results},
	from which the original statement follows by Remark~\ref{rmk-0}.
	For simplicity, we illustrate our approach here under the
	assumption that the reducible locus of $\calX$ is empty.

	Write
	$\bar\chi_1:=\unr(\bar\b_1)\omega^{a+b+1}$ and
	$\bar\chi_2:=\unr(\bar\b_2)\omega^b$.
	Put $\bar\rho_0=\bar\chi_1\oplus\bar\chi_2$, and let
	$\bar\rho_1$ and $\bar\rho_2$ be the two non-split extensions
	of these characters in opposite directions.
	For $i=0,1,2$, let $e_i$ denote the Breuil--M\'ezard
	multiplicity of the corresponding deformation ring. By a direct computation using Theorem~\ref{thm-BM-intro},
	we have (see Lemmas~\ref{lem-type-Serre-wt}(3)
	and~\ref{lem-multiplicity-difference})
	\begin{equation}
		\label{eq-02}
		e_1=e_2,\quad\textrm{and}\quad e_0=e_1+e_2.
	\end{equation}
	Let $A_0:=R^{\ver}(\w,\t)$ and $A_i:=R_i(\w,\t)$ for $i=1,2$
	be the versal Kisin deformation rings defined in
	\S~\ref{subsection-versal-deformation-ring}.
	Then \cite[\S~6]{paskunas-2adic} provides, for $i=1,2$, the following homomorphism
	\begin{equation}
		\label{eq-03}
		A_i
		\longrightarrow
		\frac{A_i[[x,y]]}{(xy-c)}
		\simeq A_0,
	\end{equation}
	where $c$ is the generator of the reducibility ideal of the pseudo-deformation ring $R^{\ps}$ in \S~\ref{subsection-versal-deformation-ring}.
	Using this description, we prove the following result.
	\begin{proposition}[Proposition~\ref{prop-key-irreducible},
		Corollary~\ref{cor-component-injection-1}]
		\label{prop-split-intro}
		For $i=1,2$, there is an injection
		$
		\Min(A_0)\hookrightarrow\Min(A_i)$
		sending $\p$ to its preimage
		$\tp$ under the map in \eqref{eq-03}.
		Moreover,
		$
		e\bigl(A_0/(\p,\vp)\bigr)\leq2$
		for every $\p\in\Min(A_0)$.
	\end{proposition}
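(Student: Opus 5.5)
We work directly with the presentation \eqref{eq-03}, $A_0\simeq B:=A_i[[x,y]]/(xy-c)$, where $A_i$ is $\calO$-flat and reduced, $A_i[1/\vp]$ is regular, and $c\in A_i$ is the image of the generator of the reducibility ideal of $R^{\ps}$. Under the simplifying assumption of \S~\ref{subsection-introduction-proof} that the reducible locus of $\calX$ is empty, $c$ is a non-zero-divisor on $A_i$ and vanishes on no minimal prime of $A_i$. In general we also have to handle components of $A_0$ lying in the reducible locus $V(c)$; there the same argument runs with the factors $x$ or $y$ of $xy$ in place of $c$.

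\emph{Step 1: the map on minimal primes.} Let $\p\in\Min(A_0)$ and set $\tp=\p\cap A_i$. The map $A_i\to B$ is flat: $B$ is $A_i[[x,y]]$ modulo a non-zero-divisor whose image in each fibre ring $k(\q)[[x,y]]$ is $xy-\bar c$, which is nonzero. Going down therefore holds, so contractions of minimal primes are minimal and $\tp\in\Min(A_i)$. Conversely, fix $\q\in\Min(A_i)$ and put $D=A_i/\q$, a complete local domain. Then $B\otimes_{A_i}D=D[[x,y]]/(xy-\bar c)$ with $\bar c\neq0$ in $D$. I claim this ring is a domain. After inverting $x$ it becomes $D[[x]][1/x]$-type data with $y=\bar c/x$, and it embeds into the localization because $x$ is a non-zero-divisor. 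Hence exactly one minimal prime of $A_0$ lies over $\q$. This gives the injection $\Min(A_0)\hookrightarrow\Min(A_i)$, in fact a bijection in this case, compatible with taking preimages.

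\emph{Step 2: the multiplicity bound.} With $D=A_i/\tp$, we get $A_0/\p=D[[x,y]]/(xy-\bar c)$, so
\[
A_0/(\p,\vp)=(D/\vp)[[x,y]]/(xy-\bar c).
\]
Write $\bar D=D/\vp$, of dimension $d$. The element $xy-\bar c$ has order at most $2$ in the $\mathfrak m$-adic filtration of $\bar D[[x,y]]$, because $xy\in\mathfrak m^2$. It is also a non-zero-divisor, as $\bar D[[x,y]]$ is flat over $\bar D$ and the element is monic-like in $x,y$. The standard inequality $e(S/f)\leq \operatorname{ord}(f)\,e(S)$ for a parameter $f$ then gives
\[
e\bigl(A_0/(\p,\vp)\bigr)\leq 2\,e(\bar D)=2\,e\bigl(A_i/(\tp,\vp)\bigr).
\]
By the non-split case of Theorem~\ref{thm-Main-results}, already established via \eqref{eq-04} and applicable to the versal rings by Remark~\ref{rmk-0}, we have $e(A_i/(\tp,\vp))=1$. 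Hence $e(A_0/(\p,\vp))\leq 2$.

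\emph{Main obstacle.} The delicate step is Step 1 when $c\in\tp$, that is, when the component lies in the reducible locus. Then $D[[x,y]]/(xy)$ splits into the two components $x=0$ and $y=0$, and injectivity into $\Min(A_i)$ fails for a single $i$. The plan is to use the direction of the extension. On the component $y=0$ the generic point is a non-split extension in the direction of $\bar\rho_1$, and on $x=0$ in the direction of $\bar\rho_2$. So the two components are distinguished by which $A_i$ sees them faithfully, and I must check, using \cite[\S~6]{paskunas-2adic}, that the preimage map is still injective for each fixed $i$. On these reducible components the local equation has order $1$, which yields multiplicity one; this is consistent with the dichotomy in Theorem~\ref{thm-Main-results}.
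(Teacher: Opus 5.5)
Your overall route matches the paper's. Flatness of $A_i\to A_i[[x,y]]/(xy-c)$ plus going-down gives $\tp\in\Min(A_i)$; primality of $\tp A_0$ gives $\p=\tp A_0$, hence injectivity; and the multiplicity is read off from $(A_i/(\tp,\vp))[[x,y]]/(xy-\bar c)$ using the non-split case. However, Step~2 rests on a false inequality. The bound $e(S/f)\le\ord(f)\,e(S)$ is the \emph{reverse} of the standard inequality $e(S/f)\ge\ord(f)\,e(S)$ (Lemma~\ref{lem-ca1}, \cite[Theorem~14.9]{mat}), and it fails in general. For $S=\FF[[t^2,t^3]]\subseteq\FF[[t]]$ and $f=t^3$ one has $\ord(f)=1$ and $e(S)=2$, but $e(S/f)=\dim_{\FF}S/t^3S=3$. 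So your estimate $e(A_0/(\p,\vp))\le 2e(\bar D)$ is not justified as written.

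The fix is to use the non-split input \emph{before} estimating. The equality $e(A_i/(\tp,\vp))=1$ and Lemma~\ref{lem comm alg2} give $D=A_i/\tp\simeq\calO[[T_1,\dots,T_n]]$. Hence $S=\bar D[[x,y]]\simeq\FF[[T_1,\dots,T_n,x,y]]$ is regular, and Nagata's formula $e(S/f)=\ord(f)$ \cite[(40.2)]{Nagata} yields $e(A_0/(\p,\vp))=\min\{2,\ord(\bar c)\}\le 2$. This is Lemma~\ref{lem:mult leq 2}, which is exactly how the paper concludes. (The bound $e\le 2e(\bar D)$ does hold without regularity, but for a different reason: by Weierstrass preparation, $S/f$ is free of rank $2$ over $\bar D[[x+y]]$.)

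The same input repairs Step~1, where your proof that $D[[x,y]]/(xy-\bar c)$ is a domain is incomplete. It is true that $x$ is a non-zero-divisor. But the localization at $x$ is not $D[[x]][1/x]$: it contains $\sum_j y^j$, which would have to be $\sum_j\bar c^{\,j}x^{-j}$. So the domain property has not been established.

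Once $D$ is known to be a power series ring over $\calO$, $D[[x,y]]$ is a UFD, and $xy-\bar c$ is irreducible because $\bar c\neq 0$. Indeed, a factorization into non-units reduces modulo $\gothm_D$ to $xy=(ux)(u^{-1}y)$; setting $x=0$ and applying Weierstrass preparation in $y$ then forces $\bar c=0$. Hence $xy-\bar c$ is prime. Accordingly, the paper first gets $\tp\in\Min(A_i)$ by going-down, then applies Theorem~\ref{thm-BM=1-non-split}, and only then deduces that $C$ is a domain and $\tp A_0=\p$.

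Your final paragraph is not needed under the standing assumption. Note, however, that in general $A_0\not\simeq A_i[[x,y]]/(xy-c)$: over a reducible component only one of the branches $x=0$, $y=0$ lies in $\Spec A_0$. The paper instead uses $R^{\ver}/I_i^{\ver}\simeq(R^{\ps}/I_i^{\ps})[[z_i]]$. This is why the single-$i$ injection is replaced by Corollaries~\ref{cor-component-injection-1} and~\ref{cor-component-injection-2}.
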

	For $i=1,2$, combining Proposition~\ref{prop-split-intro} with \eqref{eq-01}, \eqref{eq-04},
	and \eqref{eq-02}, we obtain
	\[
	\begin{aligned}
		e_0
		&\overset{\eqref{eq-01}}{=}\sum_{\p\in\Min(A_0)}
		e\bigl(A_0/(\p,\vp)\bigr)\overset{\ref{prop-split-intro}}{\leq}2\#\Min(A_0)
		\overset{\ref{prop-split-intro}}{\leq}2\#\Min(A_i)
		\overset{\eqref{eq-04}}{=}2e_i\overset{\eqref{eq-02}}{=}e_0,
	\end{aligned}
	\]
	from which Theorem~\ref{thm-Main-results} follows in this case.
	\subsection*{Notation}
	Let $p\geq5$ be a prime number and we normalize the $p$-adic valuation by $v_p(p)=1$. 
	Let $E/\QQ_p$ be a sufficiently large finite extension,
	with ring of integers $\calO$, uniformizer $\varpi$, and
	residue field $\FF$.
	
	Write $G_{\QQ_p}:=\Gal(\overline{\QQ}_p/\QQ_p)$ and
	$\I_{\QQ_p}\subseteq G_{\QQ_p}$ for the inertia subgroup.
	We use the convention that the cyclotomic character
	$\chi_{\cycl}$ has Hodge--Tate weight $-1$.
	Its reduction modulo $\varpi$ is denoted by $\omega$,
	and the Teichm\"uller lift of $\omega$ by $\tilde\omega$.
	
	For a crystabelline representation $V$ of $G_{\QQ_p}$ with
	coefficients in a finite extension $E'/E$, write
	$
	\mathbb{D}_{\operatorname{pcrys}}(V)
	:=
	\bigl(B_{\mathrm{cris}}\otimes_{\QQ_p}V\bigr)^{
		G_{\QQ_p(\mu_{p^n})}}$ for a sufficiently large $n$. It is equipped with
	commuting $E'$-linear actions of the crystalline Frobenius
	$\phi$ and $\Gal(\QQ_p(\mu_{p^\infty})/\QQ_p)$.
	We also regard the latter as an action of $\I_{\QQ_p}$
	via the natural quotient map.
	
	Let $\Delta\simeq\FF_p^\times$ be the torsion subgroup of
	$\ZZ_p^\times$.
	For $\lambda\in\FF_p^\times$, write $[\lambda]\in\Delta$
	for its Teichm\"uller lift.
	We use $\exp(p)$ as a topological generator of
	$1+p\ZZ_p$.
	For an integer $x$, let $\{x\}\in\{0,\dots,p-2\}$
	denote its representative modulo $p-1$.
	
	Put $K=\GL_2(\ZZ_p)$ and
	$\Iw_m:=
	\left\{
	\begin{pmatrix}a&b\\c&d\end{pmatrix}\in K:
	c\in p^m\ZZ_p
	\right\}$ for $m\geq1$.
	Write
	$\Iw:=\Iw_1$.
	Let $\Iw(1)$ be the pro-$p$ Iwahori subgroup
	and write
	$
	\bar T:=\Iw/\Iw(1)=
	\left\{
	\begin{pmatrix}[\lambda]&0\\0&[\mu]\end{pmatrix}:
	\lambda,\mu\in\FF_p^\times
	\right\}$.
	
	For a Noetherian local ring $(A,\gothm)$, we write $e(A)$ for its
	Hilbert--Samuel multiplicity with respect to $\gothm$, and $\Min(A)$ for its set of minimal prime ideals.
	
	\subsection*{Acknowledgements}
	
	We are grateful to Liang Xiao for suggesting that slope
	calculations could be used to determine the Breuil--M\'ezard
	multiplicities of individual irreducible components of
	Kisin deformation rings.
	We thank Yongquan Hu for several valuable discussions
	on the structure of Kisin deformation rings.
	We also thank John Bergdall, Robert
	Pollack, and Rufei Ren for helpful conversations.
	
  \subsection*{AI Tools}
  We used ChatGPT-6 to help check proofs and references, identify typos, improve the English and presentation. We take full responsibility for the mathematical content and correctness of this paper.
	\section{Kisin deformation rings and the Breuil--M\'ezard conjecture}
	\label{section-preliminaries-of-Kisin-rings}
	In this section, we first recall the construction of (framed) potentially
	crystalline deformation rings and the Breuil--M\'ezard conjecture
	(Theorem~\ref{thm-BM}), which reduces the computation of their
	Breuil--M\'ezard multiplicities (i.e., the Hilbert--Samuel multiplicities
	of their special fibers) to calculations in the mod-$p$ representation
	theory of $\GL_2(\ZZ_p)$. In \S~\ref{subsection-components-and-BM-mult},
	we discuss the irreducible components of these rings and formulate our
	main problem: computing the Breuil--M\'ezard multiplicity of each
	irreducible component. 
	In \S~\ref{subsection-explicit-formula-of-BM-mult}, we specialize to the
	cases considered in this paper and apply Theorem~\ref{thm-BM} to obtain
	\emph{explicit} formulas for the Breuil--M\'ezard multiplicities of the
	corresponding deformation rings.
	
	\subsection{Kisin's potentially crystalline deformation rings}
	\label{subsection-Kisin-rings}
	A $p$-adic Hodge type $(\w,\t,\psi)$ consists of the following data:
	\begin{itemize}
		\item a pair of integers $\w=(k_1,k_2)$ with $k_1<k_2$;
		\item an inertial type $\t$, namely, a continuous representation
		$\tau:\I_{\QQ_p}\rightarrow\GL_2(E)$ with open kernel that extends to
		the Weil group $W_{\QQ_p}$;
		\item a continuous character
		$\psi:G_{\QQ_p}\rightarrow\calO^{\times}$ satisfying
		$\psi|_{\I_{\QQ_p}}=\chi_{\cycl}^{-k_1-k_2-1}\det\tau$.
	\end{itemize}
	Let $\bar\rho:G_{\QQ_p}\rightarrow\GL_2(\FF)$ be a continuous representation,
	and $R_{\bar\rho}^{\square}$ be its universal framed deformation ring.
	Denote by $R_{\bar\rho}^{\square,\psi}(\w,\t)$ the potentially
	\emph{crystalline} deformation ring of type $(\w,\t,\psi)$. It is the
	unique reduced, $\calO$-flat quotient of $R_{\bar\rho}^{\square}$ such that for every finite extension $E'/E$, a continuous local
	$\calO$-algebra homomorphism
	$x^*:R_{\bar\rho}^{\square}\rightarrow\calO_{E'}$ factors through
	$R_{\bar\rho}^{\square,\psi}(\w,\t)$ if and only if the corresponding
	representation $\rho_x:G_{\QQ_p}\rightarrow\GL_2(E')$ is potentially crystalline
	with Hodge--Tate weights $(k_1,k_2)$,
	$\operatorname{WD}(\rho_x)|_{\I_{\QQ_p}}\simeq\tau\otimes_E E'$, and
	$\det\rho_x=\psi\chi_{\cycl}$. Note that $\Rkis\neq0$ only if $\bar\psi\,\omega=\det\bar\rho$, and this implies the following relation
	\begin{equation}
		\label{eq-0}
		\omega^{-k_2-k_1}\cdot\det(\bar\tau)|_{\Delta}=(\det\bar\rho)|_{\Delta},
	\end{equation}
	where $\bar\tau$ is the mod-$\vp$ reduction of $\t$ and $\det(\bar\rho)$ is a character of $\QQ_p^{\times}$ via class field theory. We always assume that $\Rkis\neq0$ in this paper, and hence the above relation \eqref{eq-0} holds. 
	
	\begin{theorem}[{\cite[Theorem~3.3.8]{kisin-deform}}]
		\label{thm-Kisin}
		If $R_{\bar\rho}^{\square,\psi}(\w,\t)$ is nonzero, then it is a reduced,
		equi-dimensional, complete Noetherian local $\calO$-flat algebra, such that $R_{\bar\rho}^{\square,\psi}(\w,\t)[1/\varpi]$ is regular.
	\end{theorem}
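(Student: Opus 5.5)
This is Kisin's theorem, so the plan is to reconstruct the argument of \cite[Theorem~3.3.8]{kisin-deform}. The strategy is to realize $R_{\bar\rho}^{\square,\psi}(\w,\t)$ as the scheme-theoretic image of a projective resolution built from moduli of Breuil--Kisin modules, and to read off its properties there.

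\textbf{Step 1: the generic fibre.} Let $R^\square:=R_{\bar\rho}^{\square}$. Fix a finite Galois extension $L/\QQ_p$ over which $\t$ becomes trivial, so that potentially crystalline lifts of type $\t$ become crystalline over $L$. Following Kisin, construct a projective $R^\square$-scheme $\mathscr{L}$ of $\mathfrak{S}_L$-lattices (Breuil--Kisin modules) of height bounded by $k_2-k_1$. Its generic fibre $\mathscr{L}[1/\vp]\to \Spec R^\square[1/\vp]$ is a closed immersion onto the locus of lifts that are crystalline over $L$ with Hodge--Tate weights in $[k_1,k_2]$. This closedness is the key input. It comes from the uniqueness of the Breuil--Kisin lattice in characteristic zero, together with Kisin's theorem that a family of finite-height $\mathfrak{S}$-modules gives a family of crystalline representations.

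\textbf{Step 2: cutting out the locus.} On the generic fibre, the locus where the Hodge filtration has exact type $\w$ and where the $\Gal(L/\QQ_p)$-descent datum on $\mathbb{D}_{\operatorname{pcrys}}$ has restricted inertial type $\t$ is open and closed. This holds because both conditions are discrete invariants varying locally constantly in families: the Hodge polygon is constant on connected components once the weights are fixed, and representations of the finite group $\Gal(L/\QQ_p)$ are rigid in characteristic zero. Imposing in addition the determinant condition $\det=\psi\chi_{\cycl}$, which is a closed condition, defines a closed subscheme $X\subset \Spec R^\square[1/\vp]$.

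\textbf{Step 3: properties of the quotient.} Define $R_{\bar\rho}^{\square,\psi}(\w,\t)$ as the $\calO$-flat reduced closure of $X$ in $\Spec R^\square$. This quotient is $\calO$-flat and reduced by construction, and the characterization on $\calO_{E'}$-points follows from Step 2. Regularity and equidimensionality reduce to closed points $x$ of $X$. At such a point, the complete local ring is identified with a framed deformation ring of $\rho_x$ with fixed $p$-adic Hodge type. By Kisin's groupoid computation, this ring is formally smooth: the obstruction group maps to $H^2$ of the $\det$-trace-zero part of $\mathrm{ad}\,\rho_x$ twisted by the crystalline condition, and this vanishes by local Tate duality together with the vanishing of the Bloch--Kato $H^2_f$. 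Its dimension is
\[
4+\dim_{E'}\bigl(\mathrm{ad}^0\rho_x\bigr)/\operatorname{Fil}^0 = 4+1 ,
\]
using the regular weights $k_1<k_2$ and the fixed determinant. This dimension is independent of $x$, which gives equidimensionality of $R[1/\vp]$ and hence of $R$, since $R$ is $\calO$-flat.

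\textbf{Main obstacle.} The hard step is Step 1, namely showing that the crystalline locus is Zariski closed in $\Spec R^\square[1/\vp]$, which requires the whole theory of $\mathfrak{S}$-modules of finite height. Since the paper may assume results stated earlier, in practice I would simply cite \cite{kisin-deform} for this input and present Steps 2 and 3 as the argument.
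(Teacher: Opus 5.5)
\textbf{Overall.} The paper gives no argument for this statement: it is quoted from \cite[Theorem~3.3.8]{kisin-deform}. Citing Kisin, as you say you would in practice, is therefore exactly the paper's route and is all that is needed. As a reconstruction of Kisin's proof, however, your sketch makes several wrong claims, so Steps~1 and~3 cannot be presented as an argument in their current form.

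\textbf{Step 1.} Uniqueness of Breuil--Kisin lattices after inverting $p$ does make $\mathscr{L}[1/\vp]\to\Spec R^\square[1/\vp]$ a closed immersion. Its image, however, is the locus where $\rho|_{G_{L_\infty}}$ has finite $E(u)$-height, after twisting the weights into $[0,k_2-k_1]$. This condition only sees the restriction to $G_{L_\infty}$, and the locus is in general strictly larger than the crystalline one. For example, the Kummer extension of $E$ by $E(1)$ attached to the uniformizer $\pi_L$ used to define $L_\infty$ is semi-stable but not crystalline over $L$. Yet it splits on $G_{L_\infty}$, because $\pi_L$ has a compatible system of $p$-power roots there. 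So there is no theorem that a family of finite-height $\mathfrak{S}$-modules is a family of crystalline representations. Kisin needs a further analysis of the monodromy operator on the associated module over the open unit disc. This cuts out the semi-stable locus, and then the locus $N=0$, as closed subspaces of the finite-height locus. That is where the closedness you single out as the main obstacle actually lives.

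\textbf{Step 3: smoothness.} The mechanism you give does not work. Local Tate duality only identifies $H^2(G_{\QQ_p},\mathrm{ad}^0\rho_x)$ with $H^0(G_{\QQ_p},\mathrm{ad}^0\rho_x(1))^\vee$. This does vanish in the paper's setting, since $\theta_1\neq\theta_2$. But that only shows the \emph{unrestricted} deformation functor of $\rho_x$ is smooth, and a closed subfunctor of a smooth functor need not be smooth.

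The actual input is that $\mathbb{D}_{\operatorname{pcrys}}$ identifies potentially crystalline deformations of $\rho_x$ over Artinian $E'$-algebras with deformations of the filtered $(\phi,\Gal(L/\QQ_p))$-module $\mathbb{D}_{\operatorname{pcrys}}(\rho_x)$. Weak admissibility is automatic for such deformations, being stable under extensions. These deformations are unobstructed because $\phi$ and the filtration can be lifted freely; equivalently, the governing complex has only two terms, so there is no $H^2$ at all. The fixed-determinant version follows because the determinant map is formally smooth: twist by a square root of the discrepancy, using $p\neq2$.

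\textbf{Step 3: representability.} Identifying complete local rings with this functor requires $X$ to represent the potentially crystalline condition on all finite $E$-algebras, not just on field-valued points; Kisin's construction provides this. The resulting smoothness then shows that $X$ is reduced. Only after that does passing to the reduced $\calO$-flat closure leave the generic fibre unchanged.

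\textbf{Dimension.} Your count is off by one. With fixed determinant, the framed tangent space at $x$ is $Z^1_{\mathrm{pcr}}(G_{\QQ_p},\mathrm{ad}^0\rho_x)$, of dimension
\[
\dim H^1_{\mathrm{pcr}}(\mathrm{ad}^0\rho_x)+3-\dim H^0(\mathrm{ad}^0\rho_x)
=3+\dim \mathbb{D}_{\mathrm{dR}}(\mathrm{ad}^0\rho_x)/\operatorname{Fil}^0=4 .
\]
Your value $4+1$ is the count without fixing the determinant. Hence $R_{\bar\rho}^{\square,\psi}(\w,\t)$ has Krull dimension $5$. This does not affect equidimensionality, which only needs independence of $x$.

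The passage from $R[1/\vp]$ to $R$ also needs a justification, which you omit. By regularity, each closed point lies on a single component. Moreover, in an $\calO$-flat complete local domain $A$, every maximal ideal of $A[1/\vp]$ has height $\dim A-1$, by catenarity.
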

	
	\begin{remark}
		We only work with potentially \emph{crystalline} deformation rings and omit
		the superscript indicating the crystalline condition from our
		notation. Note that the ring constructed in \cite[Corollary~2.7.7]{kisin-deform} is
		already reduced, by \cite[Theorem~3.3.8]{kisin-deform}.
	\end{remark}
	
	\subsection{The Breuil--M\'ezard conjecture}
	\label{subsection-BM-conj}
	The Breuil--M\'ezard conjecture expresses the Hilbert--Samuel multiplicity
	$e(R_{\bar\rho}^{\square,\psi}(\w,\t)/\varpi)$ of the special fiber in terms of
	representation-theoretic multiplicities.
	Let $\sigma(\t)$ be the finite-dimensional smooth irreducible
	$E$-representation of $K$ associated with the inertial type $\t$ by Henniart in the crystalline case (we omit the superscript $\operatorname{cr}$); see
	\cite[\S~1]{HT-BMconj}.
	Consider the finite-dimensional $K$-representation
	\[
	\sigma(\w,\t):=
	(\Sym^{k_2-k_1-1}E^2\otimes\operatorname{det}^{-k_2})\otimes_E\sigma(\t).
	\]
	We denote the reduction modulo $\varpi$ of a $K$-stable $\calO$-lattice in
	$\sigma(\w,\t)$ by $\bar\sigma(\w,\t)$, and its semisimplification by
	$\bar\sigma(\w,\t)^{\operatorname{ss}}$ (note that in \cite[\S~3]{EG1}, the cyclotomic
	character has Hodge--Tate weight $1$, and thus in their convention, our Galois representation
	has Hodge--Tate weights $-k_2<-k_1$. Hence the associated algebraic representation is
	$
	\Sym^{-k_1-(-k_2)-1}E^2\otimes\operatorname{det}^{-k_2}$).
	
	Recall that a \emph{Serre weight} is an irreducible smooth $\FF$-representation of $K$.
	For a residual Galois representation $\bar\rho$, let $W(\bar\rho)$ be the
	set of Serre weights attached to $\bar\rho$ as in \cite[\S~3.3]{EG1}.
	The Breuil--M\'ezard conjecture for $\GL_2(\QQ_p)$, established for $p\geq5$
	through work of Kisin, Pa\v{s}k\=unas, and Hu--Tan, relates the multiplicity
	of the special fiber to the multiplicities
	$[\bar\sigma(\w,\t)^{\operatorname{ss}}:\sigma]$ for $\sigma\in W(\bar\rho)$.
	See \cite[\S~1]{HT-BMconj} for a discussion and further references.
	
	\begin{theorem}
		\label{thm-BM}
		For $(\w,\t,\psi)$ and $\bar\rho$ as above, there are integers
		$\mu_{\sigma}(\bar\rho)\geq0$, independent of $(\w,\t,\psi)$, such that
		\[
		e(R_{\bar\rho}^{\square,\psi}(\w,\t)/\varpi)
		=\sum_{\sigma\in W(\bar\rho)}
		[\bar\sigma(\w,\t)^{\operatorname{ss}}:\sigma]
		\cdot\mu_{\sigma}(\bar\rho).
		\]
	\end{theorem}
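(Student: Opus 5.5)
This statement is the Breuil--M\'ezard conjecture for $\GL_2(\QQ_p)$ with $p\geq5$, and we will not reprove it. We will assemble it from the literature and check that the normalizations match ours. The plan has three steps.

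\emph{Step 1: generic $\bar\rho$.} When $\operatorname{End}_{G_{\QQ_p}}(\bar\rho)=\FF$ and $\bar\rho$ is not of the form $\bigl(\begin{smallmatrix}\chi&*\\0&\chi\omega\end{smallmatrix}\bigr)$, we take Kisin's argument (\cite{kisin-deform} and his Breuil--M\'ezard paper). It combines Colmez's $p$-adic Langlands correspondence with Pa\v{s}k\=unas' projective envelope. Let $P$ be the projective envelope in the category of locally admissible $\GL_2(\QQ_p)$-representations, with $R_{\bar\rho}^{\psi}$ acting on it. The key input is that
\[
M(\sigma(\w,\t)):=\Hom_K\bigl(P,\sigma(\w,\t)^{\circ,\vee}\bigr)^\vee
\]
is a finitely generated module over $R_{\bar\rho}^{\psi}(\w,\t)$. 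It is Cohen--Macaulay and supported on the whole of this ring, with generic rank one. This rank-one statement comes from Colmez's theorem that the locally algebraic vectors of the Banach representation $\Pi(\rho_x)$ are $\pi(\rho_x)\otimes\mathrm{alg}$, together with the multiplicity-one property of $K$-types. Reducing mod $\varpi$ and using exactness of $M$ in $\sigma$ then gives
\[
e\bigl(R/\varpi\bigr)=\sum_\sigma[\bar\sigma(\w,\t)^{\operatorname{ss}}:\sigma]\,e\bigl(M(\sigma)\bigr),
\]
so we set $\mu_\sigma(\bar\rho):=e(M(\sigma))$. Passing from the unframed ring to the framed one does not change the multiplicity, since the framed ring is formally smooth over it. We then check that $e(M(\sigma))=0$ unless $\sigma\in W(\bar\rho)$.

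\emph{Step 2: non-generic $\bar\rho$.} For split $\bar\rho$ and the remaining cases, we invoke Pa\v{s}k\=unas, \emph{On the Breuil--M\'ezard conjecture}, which runs the same argument using his blocks and a Gabber-type argument. The residual cases ($p\geq5$ and the $\bar\rho\sim\bigl(\begin{smallmatrix}\omega&*\\0&1\end{smallmatrix}\bigr)$ situations) are covered by Hu--Tan \cite{HT-BMconj}.

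\emph{Step 3: normalization.} This is the only step where anything could go wrong, and it is mostly bookkeeping. Following the parenthetical remark above, our Hodge--Tate weights $(k_1,k_2)$ correspond to $(-k_2,-k_1)$ in the convention of \cite{EG1} and \cite{HT-BMconj}. We must therefore verify that the algebraic representation $\Sym^{k_2-k_1-1}E^2\otimes\det^{-k_2}$ attached there is exactly ours, and that the determinant condition $\psi\chi_{\cycl}$ matches theirs. Once these twists are reconciled, the statement follows directly from their main theorems, and the independence of $\mu_\sigma(\bar\rho)$ from $(\w,\t,\psi)$ holds because $M(\sigma)$ depends only on $\sigma$ and $\bar\rho$.
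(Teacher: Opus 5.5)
Your proposal matches the paper: the paper gives no proof either, citing the result as the Breuil--M\'ezard conjecture established for $p\geq5$ by Kisin, Pa\v{s}k\=unas and Hu--Tan (see \cite[\S~1]{HT-BMconj}), and it reconciles the Hodge--Tate convention with \cite{EG1} in the parenthetical before the statement, as in your Step~3. Some attributions need correcting. The Cohen--Macaulay module built from the projective envelope $P$ is Pa\v{s}k\=unas's purely local argument; Kisin's original proof predates $P$ and also relies on global patching. Pa\v{s}k\=unas's paper assumes $\operatorname{End}_{G_{\QQ_p}}(\bar\rho)=\FF$, so the split non-scalar case, which the paper needs for $\bar\rho_0$, is due to Hu--Tan. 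Finally, the exceptional shape is $\bigl(\begin{smallmatrix}\chi\omega&*\\0&\chi\end{smallmatrix}\bigr)$, where $\Ext^1$ is two-dimensional, not $\bigl(\begin{smallmatrix}\chi&*\\0&\chi\omega\end{smallmatrix}\bigr)$.
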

	
	\subsection{Irreducible components and Breuil--M\'ezard multiplicities}
	\label{subsection-components-and-BM-mult}
	We simply write $R:=R_{\bar\rho}^{\square,\psi}(\w,\t)$ in this subsection.
	Let $\operatorname{Min}(R)=\{\p_1,\dots,\p_n\}$ be the set of minimal prime
	ideals of $R$. Since $R$ is $\calO$-torsion-free, localization induces a bijection
	\[
	\operatorname{Min}(R)\longrightarrow\operatorname{Min}(R[1/\varpi]),
	\qquad\p_i\longmapsto\p_iR[1/\varpi].
	\]
	By Theorem~\ref{thm-Kisin}, $R[1/\varpi]$ is regular. Consequently, the
	irreducible components of $\Spec R[1/\varpi]$ are pairwise disjoint and coincide
	with its connected components. In other words, we have
	\[
	R[1/\varpi]\cong\prod_{i=1}^n(R/\p_i)[1/\varpi].
	\]
	Let $\calX:=\Spf(R)^{\rig}$ be the rigid generic fiber of $\Spf R$. Since $R[1/\varpi]$ is regular and hence normal, it follows from
	\cite[Theorems~2.1.3 and~2.3.1]{conrad} that there is also a natural bijection
	\[
	\left\{
	\begin{gathered}
		\text{irreducible (equivalently, connected)}\\
		\text{components of }\Spec R[1/\varpi]
	\end{gathered}
	\right\}
	\xrightarrow{\sim}
	\left\{
	\begin{gathered}
		\text{irreducible (equivalently, connected)}\\
		\text{components of }\calX
	\end{gathered}
	\right\},
	\]
	which sends the component corresponding to $\p_i R[1/\varpi]$
	to $\Spf(R/\p_i)^{\rig}$.
	
	\begin{lemma}
		\label{lem-inequality-of-mult}
		With the notation above, we have
		\[
		e(R/\varpi)
		=\sum_{i=1}^n e\bigl(R/(\p_i+\varpi R)\bigr)
		\geq \#(\textrm{irreducible components of }\Rkis).
		\]
	\end{lemma}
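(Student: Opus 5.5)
The equality is an instance of the associativity formula for Hilbert--Samuel multiplicities, applied to the one-dimensional ideal $\varpi R$; the inequality then reduces to the positivity of each summand. Write $d=\dim R$. By Theorem~\ref{thm-Kisin}, $R$ is equidimensional, so $\dim R/\p_i=d$ for every $i$. Since $R$ is $\calO$-flat, $\varpi$ is a nonzerodivisor, so $\dim R/\varpi=d-1$. The same holds for each $R/\p_i$: it is a domain, $\varpi\notin\p_i$, and hence $\dim R/(\p_i,\varpi)=d-1$.

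I will first establish the additivity formula for $e(-/\varpi)$ on finitely generated $R$-modules. For a finitely generated $R$-module $M$ on which $\varpi$ acts injectively, the Koszul complex on $\varpi$ has no higher homology. The function $M\mapsto e_{d-1}(M/\varpi M)$, the Hilbert--Samuel multiplicity in dimension $d-1$ with respect to the maximal ideal, is therefore the Euler characteristic
\[
\chi(\varpi;M)=e_{d-1}(M/\varpi M)-e_{d-1}(M[\varpi])
\]
computed in dimension $d-1$. This function is additive on short exact sequences, and it is a standard fact (Serre, \emph{Algèbre locale}) that it equals the associativity sum
\[
\chi(\varpi;M)=\sum_{\mathfrak q}\ell_{R_{\mathfrak q}}(M_{\mathfrak q})\,e\bigl(R/(\mathfrak q+\varpi R)\bigr),
\]
where $\mathfrak q$ runs over the primes with $\dim R/\mathfrak q=d$.

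Next I apply this with $M=R$. The primes with $\dim R/\mathfrak q=d$ are exactly the minimal primes $\p_i$, and $\varpi$ is a nonzerodivisor on $R$, so
\[
e(R/\varpi)=\sum_{i=1}^n \ell(R_{\p_i})\,e\bigl(R/(\p_i+\varpi R)\bigr).
\]
Because $R$ is reduced (Theorem~\ref{thm-Kisin}), each $R_{\p_i}$ is a field, so $\ell(R_{\p_i})=1$. This gives the stated equality.

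An alternative route to the same equality avoids the Koszul formalism. Since $R$ is reduced, there is a short exact sequence $0\to R\to\prod_i R/\p_i\to C\to0$ in which $C$ is supported in dimension $<d$ and $\varpi$ is injective on all terms except possibly $C$. Taking $\operatorname{Tor}(-,\calO/\varpi)$ yields a long exact sequence, and the correction terms $C/\varpi C$ and $C[\varpi]$ have dimension $\le d-2$. Hence they do not contribute to $e_{d-1}$, and the equality follows again.

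Finally, the inequality follows because each $R/(\p_i+\varpi R)$ is a nonzero local ring of dimension $d-1$, so its multiplicity $e\bigl(R/(\p_i+\varpi R)\bigr)$ is a positive integer. Summing over $i$ gives at least $n=\#\Min(R)$. The minimal primes $\p_i$ correspond exactly to the irreducible components of $\Spec R$, which is the count on the right-hand side of the statement.

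I expect the main obstacle to be the dimension bookkeeping in the additivity step. The point to verify is that the torsion $C[\varpi]$ of the cokernel contributes nothing in dimension $d-1$, which holds because $C$ is supported in dimension $\le d-1$. Once that is checked, the rest is formal.
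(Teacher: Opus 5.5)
Your main argument is correct, and it takes a different route from the paper. The paper, via Lemma~\ref{lem multiplicity-components}, checks the hypothesis of \cite[Lemme~5.1.6]{BM02}: that $R\to R/\p_i$ is surjective and that $R[1/\varpi]\to\prod_i(R/\p_i)[1/\varpi]$ is an isomorphism. It then quotes that lemma. The isomorphism is where regularity of $R[1/\varpi]$ enters, through disjointness of the components in the generic fiber and the Chinese remainder theorem. Your associativity formula for $\chi(\varpi;-)$ needs only three things: $\varpi$ is a nonzerodivisor, $R$ is equidimensional, and $R$ is reduced (so $\ell(R_{\p_i})=1$). So your route shows the equality holds without generic regularity, e.g.\ for $\calO[\![x,y]\!]/(xy)$. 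The cost is that you should justify the ``standard fact'' yourself. This is a short induction on a prime filtration of $M$: $\chi(\varpi;R/\mathfrak q)$ vanishes if $\varpi\in\mathfrak q$ or $\dim R/\mathfrak q<d$, and equals $e(R/(\mathfrak q+\varpi R))$ otherwise.

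Your alternative route and your closing paragraph, however, rest on a false claim. $C[\varpi]$ and $C/\varpi C$ need not have dimension $\le d-2$, and $C[\varpi]$ can contribute in dimension $d-1$. In the paper's setting $C[1/\varpi]=0$, so $C$ is $\varpi$-power torsion and $\dim C[\varpi]=\dim C$, which may equal $d-1$. For instance, take $R=\calO[\![x]\!]/(x(x-\varpi))$. It is reduced, $\calO$-flat and equidimensional, with $R[1/\varpi]\cong E\times E$ and $d=1$. Here $C\cong C[\varpi]\cong C/\varpi C\cong\FF$, so both correction terms have length $1$ rather than vanishing.

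What is true is that the two correction terms cancel. Since $\dim C\le d-1$, the exact sequence $0\to C[\varpi]\to C\xrightarrow{\varpi}C\to C/\varpi C\to 0$ gives $e_{d-1}(C[\varpi])=e_{d-1}(C/\varpi C)$. Equivalently, $\chi(\varpi;C)=0$ by your own formula, because $C$ is supported at no minimal prime. With that correction the alternative route works, and it is close in spirit to the paper's comparison of $R$ with $\prod_i R/\p_i$.
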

	
	\begin{proof}
		The rings $R/\p_i$ are $\calO$-flat and have the same dimension as $R$.
		The equality then follows from the product decomposition of
		$R[1/\varpi]$ and Lemma~\ref{lem multiplicity-components}.
	\end{proof}
	
	\begin{definition}
		Let $A$ be
		$R_{\bar\rho}^{\square,\psi}(\w,\t)$ or $R_{\bar\rho}^{\square,\psi}(\w,\t)/\p$ for some minimal prime
		ideal $\p$. The \emph{Breuil--M\'ezard multiplicity} of $A$ is defined to be the
		Hilbert--Samuel multiplicity $e(A/\varpi)$ of the special fiber $A/\vp$.
	\end{definition}
	
	A primary aim of this paper is to determine how the total multiplicity
	$e(R/\varpi)$ is distributed among the irreducible components of $\Spec R$.
	For instance, if $e(R(\p)/\varpi)=1$ for some $\p\in\Min(R)$, then
	Lemma~\ref{lem comm alg2} implies that $R(\p)$ is a formal power series ring
	over $\calO$.
	
	\begin{remark}
		Even if every irreducible component is a formal power series ring over
		$\calO$, this does not determine the integral structure of $R$ itself.
		See \cite[Example~5.1.4]{BM02} for an illustration of this point.
	\end{remark}
	
	\subsection{Explicit formulas for $e(R_{\bar\rho}^{\square,\psi}(\w,\t)/\varpi)$}
	\label{subsection-explicit-formula-of-BM-mult}
	In this paper, we mainly focus on \emph{wildly ramified principal series types} and
	\emph{reducible, very generic} residual $\bar\rho$. In what follows, we first introduce these notions
	and then carry out the multiplicity calculation.
	
	\begin{definition}
		\label{def-types-and-rho}
		\begin{enumerate}
			\item A \emph{principal series type} is a direct sum
			$\t=\theta_1\oplus\theta_2$, where
			$\theta_i:\ZZ_p^{\times}\rightarrow E^{\times}$ are finite order characters, viewed as characters of $\I_{\QQ_p}$ via local class field
			theory. If $\theta_1\neq\theta_2$, its $p$-conductor is the least integer
			$m\geq1$ such that $\theta_1\theta_2^{-1}$ factors through
			$(\ZZ_p/p^m\ZZ_p)^{\times}$. If $\theta_1=\theta_2=\theta$, we set
			$m=0$. We call $\t$ \emph{wildly ramified} if $m\geq2$.
			
			\item A reducible representation $\bar\rho$ of $G_{\QQ_p}$ is called
			\emph{generic} if
			\[
			\bar\rho\simeq
			\begin{pmatrix}
				\unr(\bar\b_1)\omega^{a+b+1}&*\\
				0&\unr(\bar\b_2)\omega^b
			\end{pmatrix},
			\]
			where $a\in\{1,\dots,p-4\}$, $b\in\{0,\dots,p-2\}$, and
			$\bar\b_1,\bar\b_2\in\FF^{\times}$.
			It is called \emph{very generic} if $2\leq a\leq p-5$.
		\end{enumerate}
	\end{definition}
	We now compute the Breuil--M\'ezard multiplicity of
	$R=R^{\square,\psi}_{\bar\rho}(\w,\t)$ when $\t$ is a principal series type
	and $\bar\rho$ is reducible and generic. The following lemma reduces the
	calculation to representation-theoretic multiplicities via
	Theorem~\ref{thm-BM}.
	
	\begin{lemma}
		\label{lem-type-Serre-wt}
		Let $\t$ be a principal series type, and let $\bar\rho$ be a reducible
		generic Galois representation as in Definition~\ref{def-types-and-rho}.
		Then the following statements hold.
		\begin{enumerate}
			\item For $m\geq1$, denote by $\theta_1\otimes\theta_2$ the character of $\Iw_m$, which sends $g=(g_{i,j})_{1\leq i,j\leq2}$ to $\theta_2(\det g)(\theta_1\theta_2^{-1})(g_{11})$.
			We have
			\[
			\sigma(\w,\t)=
			\begin{cases}
				(\Sym^{k_2-k_1-1}E^2\otimes\operatorname{det}^{-k_2})
				\otimes(\theta\circ\det),&\text{if }m=0,\\[4pt]
				(\Sym^{k_2-k_1-1}E^2\otimes\operatorname{det}^{-k_2})
				\otimes\Ind_{\Iw_m}^K(\theta_1\otimes\theta_2),
				&\text{if }m\geq1.
			\end{cases}
			\]
			
			\item Put
			$
			\sigma:=\Sym^a\FF^2\otimes\operatorname{det}^b$, and
			$\sigma^{\mathrm{co}}:=\Sym^{p-3-a}\FF^2\otimes\operatorname{det}^{a+b+1}$.
			Then we have
			\[
			W(\bar\rho)=
			\begin{cases}
				\{\sigma\},&\text{if }\bar\rho\text{ is non-split},\\[4pt]
				\{\sigma,\sigma^{\mathrm{co}}\},&\text{if }\bar\rho\text{ is split}.
			\end{cases}
			\]
			
			\item The Breuil--M\'ezard multiplicity is
			\[
			e(R_{\bar\rho}^{\square,\psi}(\w,\t)/\varpi)=
			\begin{cases}
				[\bar\sigma(\w,\t)^{\operatorname{ss}}:\sigma],
				&\text{if }\bar\rho\text{ is non-split},\\[4pt]
				[\bar\sigma(\w,\t)^{\operatorname{ss}}:\sigma]
				+[\bar\sigma(\w,\t)^{\operatorname{ss}}:
				\sigma^{\mathrm{co}}],
				&\text{if }\bar\rho\text{ is split}.
			\end{cases}
			\]
		\end{enumerate}
	\end{lemma}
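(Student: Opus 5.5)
The lemma has three parts. Parts (1) and (2) are recollections of known results, and part (3) follows by inserting them into Theorem~\ref{thm-BM}. I would treat them in that order.

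For (1), I use the definition $\sigma(\w,\t)=(\Sym^{k_2-k_1-1}E^2\otimes\det^{-k_2})\otimes_E\sigma(\t)$, so it suffices to identify Henniart's crystalline type $\sigma(\t)$ for $\t=\theta_1\oplus\theta_2$. If $m=0$, then $\t=\theta\oplus\theta$ is scalar, and the crystalline type is the one-dimensional representation $\theta\circ\det$ of $K$. If $m\ge1$, the type is the principal series $\Ind_{\Iw_m}^K(\theta_1\otimes\theta_2)$. Here $\Iw_m$ is the smallest Iwahori-level subgroup on which the character $g\mapsto\theta_2(\det g)(\theta_1\theta_2^{-1})(g_{11})$ is well defined, because $\theta_1\theta_2^{-1}$ factors through $(\ZZ/p^m)^\times$. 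For both cases I would cite \cite[\S~1]{HT-BMconj} (Henniart's appendix to Breuil--M\'ezard). The only point needing care is matching the ordering $\theta_1\otimes\theta_2$ with the conventions in loc.\ cit.; since $\Ind(\theta_1\otimes\theta_2)\cong\Ind(\theta_2\otimes\theta_1)$ for $m\ge1$ (via intertwining by the antidiagonal Weyl element together with the conjugate Iwahori), the ordering does not matter.

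For (2), I use the recipe for $W(\bar\rho)$ in \cite[\S~3.3]{EG1}, taking into account the Hodge--Tate sign convention explained above. A weight $\Sym^r\FF^2\otimes\det^s$ lies in $W(\bar\rho)$ when $\bar\rho$ has a crystalline lift of Hodge--Tate weights corresponding to $(r,s)$. For $\bar\rho$ an extension of $\unr(\bar\b_2)\omega^b$ by $\unr(\bar\b_1)\omega^{a+b+1}$, the weight $\sigma=\Sym^a\otimes\det^b$ always occurs. Because $1\le a\le p-4$, we are away from the exceptional cases $a\in\{0,p-3,p-2\}$, so the non-split extension is peu ramifi\'e (resp.\ no très ramifié issues arise) and $W(\bar\rho)=\{\sigma\}$. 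When $\bar\rho$ is split, one can also realize the reversed extension, whose weight is obtained by swapping the roles of the two characters: the resulting exponent is $\omega^{(p-3-a)+(a+b+1)+1}$ for the other character. This gives $\sigma^{\co}=\Sym^{p-3-a}\otimes\det^{a+b+1}$. I would verify this exponent bookkeeping modulo $p-1$ directly.

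For (3), I substitute (2) into Theorem~\ref{thm-BM} and use the explicit values $\mu_\sigma(\bar\rho)$ from \cite{EG1}. The only genuine input is that $\mu_\sigma(\bar\rho)=1$ for every $\sigma\in W(\bar\rho)$ in the generic reducible case. This follows, for example, by testing Theorem~\ref{thm-BM} against crystalline types of small weight where the deformation ring is formally smooth, hence has $e=1$, and the reduction contains $\sigma$ exactly once.

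The main obstacle is (2) together with $\mu=1$: the rest is bookkeeping, whereas these require care with sign conventions and with confirming that genericity excludes the exceptional Serre weights.
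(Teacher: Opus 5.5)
Your proposal is correct and follows essentially the same route as the paper, whose proof is also purely by citation: the shape of $\sigma(\t)$ (the paper quotes \cite[\S~4.2]{hu-paskunas}; note that \cite{HT-BMconj} is Hu--Tan, not Henniart's appendix), the list $W(\bar\rho)$ from \cite[\S~3.3]{EG1}, and $\mu_{\sigma'}(\bar\rho)=1$ from \cite[Proposition~1.1.3(2)]{EG1}, inserted into Theorem~\ref{thm-BM}. Your alternative derivation of $\mu_{\sigma'}(\bar\rho)=1$ also works: for the crystalline type with $\bar\sigma(\w,\t)^{\mathrm{ss}}=\sigma'$ irreducible, the Hodge--Tate weights differ by at most $p-3$, so the Fontaine--Laffaille deformation ring is formally smooth and has $e=1$.
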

	
	\begin{proof}
		By \cite[\S~4.2]{hu-paskunas}, $\sigma(\tau)$ is given by
		\[
		\sigma(\t)=
		\begin{cases}
			\theta\circ\det,&\text{if }m=0,\\[4pt]
			\Ind_{\Iw_m}^K(\theta_1\otimes\theta_2),&\text{if }m\geq1.
		\end{cases}
		\]
		Then (1) follows from \cite[\S~3.1]{EG1}, and (2)
		follows from \cite[\S~3.3]{EG1}.
		For each $\sigma'\in W(\bar\rho)$ in (2), $\mu_{\sigma'}(\bar\rho)=1$ by
		\cite[Proposition~1.1.3(2)]{EG1}, and hence (3) follows from Theorem~\ref{thm-BM}.
	\end{proof}
	
	Let $\bar\theta_i$ denote the reduction of $\theta_i$ modulo $\varpi$, and
	write $\bar\theta_i=\omega^{n_i}$ for $i=1,2$. Here $\bar\theta_i$ is trivial on $1+p\ZZ_p$ and we view it as a character of $\Delta$. Moreover, we also view $\bar\theta_1\otimes\bar\theta_2$ as a character of $\Iw/\Iw(1)\simeq\bar T$.
	Since we always assume that $\Rkis\neq0$,
	the determinant compatibility \eqref{eq-0} implies that the central character of
	$\bar\sigma(\w,\t)$ agrees with that of $\sigma$, namely
	\begin{equation}
		\label{eq3}
		k_2-k_1-1-2k_2+n_1+n_2\equiv a+2b\pmod{p-1}.
	\end{equation}
	The following proposition (together with Lemma~\ref{lem-type-Serre-wt}(3)) provides
	explicit formulas for the Breuil--M\'ezard multiplicity of
	$R_{\bar\rho}^{\square,\psi}(\w,\t)$.
	
	\begin{proposition}
		\label{prop-halo-mult}
		Keep the notation of Definition~\ref{def-types-and-rho} and
		Lemma~\ref{lem-type-Serre-wt}. Write
		$k=k_2-k_1+1$ and $\bar\theta_i|_{\Delta}=\omega^{n_i}$ for $i=1,2$.
		Consider the following integer
		\[
		s=s(\w,\t,\sigma):=\{k_2-(n_1-b)\}\in\{0,\dots,p-2\}.
		\]
		Assume that $m\geq1$. Then we have
		\begin{align*}
			[\bar\sigma(\w,\t)^{\operatorname{ss}}:\sigma]
			& =2(k-1)\frac{p^{m-1}-1}{p-1}
			+\left\lfloor\frac{k-2-s}{p-1}\right\rfloor
			+\left\lfloor\frac{k-2-\{s+a\}}{p-1}\right\rfloor+2,\\
			[\bar\sigma(\w,\t)^{\operatorname{ss}}:\sigma^{\mathrm{co}}]
			& =2(k-1)\frac{p^{m-1}-1}{p-1}
			+\left\lfloor\frac{k-2-\{s-1\}}{p-1}\right\rfloor
			+\left\lfloor\frac{k-2-\{s+a+1\}}{p-1}\right\rfloor+2.
		\end{align*}
	\end{proposition}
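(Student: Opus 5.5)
The plan is to compute $\bar\sigma(\w,\t)^{\operatorname{ss}}$ by Brauer characters, reducing everything to principal series of $K$ induced from characters of $\bar T$. By Lemma~\ref{lem-type-Serre-wt}(1), for $m\geq1$ we have $\sigma(\w,\t)=V\otimes\Ind_{\Iw_m}^K(\theta_1\otimes\theta_2)$ with $V=\Sym^{k-2}E^2\otimes\det^{-k_2}$. Since semisimplified reductions are determined by Brauer characters and are compatible with tensor products and induction, I will use
\[
\bar\sigma(\w,\t)^{\operatorname{ss}}=\Bigl(\Ind_{Iw}^K\bigl(\bar V|_{\Iw}\otimes\Ind_{\Iw_m}^{\Iw}(\bar\theta_1\otimes\bar\theta_2)\bigr)\Bigr)^{\operatorname{ss}}.
\]
This uses transitivity of induction and the projection formula $V\otimes\Ind_{\Iw}^K W\simeq\Ind_{\Iw}^K(V|_{\Iw}\otimes W)$. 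Because $\Iw(1)$ is pro-$p$, every Jordan--H\"older factor of an $\FF[\Iw]$-module is a character of $\bar T$. So the first step is to list the $\bar T$-characters that occur.

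\textbf{Step 1 (the $\Sym$ factor).} On $\bar T$, $\bar V$ has the weights $\psi_i(\diag([\lambda],[\mu]))=\lambda^{k-2-i}\mu^{i}(\lambda\mu)^{-k_2}$ for $i=0,\dots,k-2$.

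\textbf{Step 2 (the type factor).} Since $\bar\theta_1\otimes\bar\theta_2$ extends to $\Iw$ through $\bar T$, we get $\Ind_{\Iw_m}^{\Iw}(\bar\theta_1\otimes\bar\theta_2)\simeq(\bar\theta_1\otimes\bar\theta_2)\otimes\FF[\Iw/\Iw_m]$. The composition factors of the permutation module $\FF[\Iw/\Iw_m]$ are read off from the $\bar T$-permutation action on the cosets. These cosets are represented by lower-unipotent matrices with entry $c\in p\ZZ_p/p^m\ZZ_p$, and $\diag([\lambda],[\mu])$ acts by $c\mapsto(\mu/\lambda)c$. The coset $c=0$ is fixed. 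The $p^{m-1}-1$ nonzero cosets split into free orbits of $\FF_p^\times$, each contributing the regular representation of $\bar T/\{\lambda=\mu\}$. This gives
\[
\Ind_{\Iw_m}^{\Iw}(\bar\theta_1\otimes\bar\theta_2)^{\operatorname{ss}}=(\bar\theta_1\otimes\bar\theta_2)\ \oplus\ \frac{p^{m-1}-1}{p-1}\bigoplus_{j=0}^{p-2}(\bar\theta_1\otimes\bar\theta_2)\cdot(\lambda/\mu)^j .
\]

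\textbf{Step 3 (principal series of $K$).} I will use the standard decomposition of $\Ind_{\Iw}^K(\omega^\alpha\otimes\omega^\beta)^{\operatorname{ss}}$. When $\alpha\not\equiv\beta$, it is $\Sym^{\{\alpha-\beta\}}\otimes\det^{\beta}\oplus\Sym^{p-1-\{\alpha-\beta\}}\otimes\det^{\alpha}$. When $\alpha\equiv\beta$, it is $\det^\alpha\oplus\Sym^{p-1}\otimes\det^\alpha$. With $\sigma=\Sym^a\otimes\det^b$ and $2\le a\le p-5$ fixed, the central character is already matched by \eqref{eq3}. Hence $\sigma$ occurs in $\Ind_{\Iw}^K(\omega^\alpha\otimes\omega^\beta)$ exactly when $\beta\equiv b$ or $\alpha\equiv b$, each time with multiplicity one; genericity rules out coincidences. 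The same holds for $\sigma^{\co}$ with $b$ replaced by $a+b+1$.

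\textbf{Step 4 (counting).} For the bulk term, summing over all $j$ makes $\psi_i\cdot(\bar\theta_1\otimes\bar\theta_2)(\lambda/\mu)^j$ run over all $p-1$ characters with the right central character. Exactly two of these contain $\sigma$, respectively $\sigma^{\co}$. This gives $2(k-1)\frac{p^{m-1}-1}{p-1}$ for each weight. For the single copy $\psi_i\cdot(\bar\theta_1\otimes\bar\theta_2)$, the conditions become congruences on $i$ modulo $p-1$. The condition $\mu$-exponent $\equiv b$ reads $i-k_2+n_2\equiv b$, and using \eqref{eq3} this becomes $i\equiv\{s+a\}$. The condition $\lambda$-exponent $\equiv b$ reads $k-2-i-k_2+n_1\equiv b$, which becomes $i\equiv k-2-s$; after the substitution $i\mapsto k-2-i$ it is counted as $\lfloor\frac{k-2-s}{p-1}\rfloor+1$. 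The number of $i\in[0,k-2]$ in a residue class $c\in\{0,\dots,p-2\}$ is $\lfloor\frac{k-2-c}{p-1}\rfloor+1$, which yields the stated formula for $\sigma$. The analogous computation with $b$ replaced by $a+b+1$ produces the residues $\{s-1\}$ and $\{s+a+1\}$ for $\sigma^{\co}$.

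The main obstacle is the bookkeeping in Step~4: the normalization of which diagonal entry carries $\theta_1$, the sign conventions in $\det^{-k_2}$, and the exact residues $s,\{s+a\},\{s-1\},\{s+a+1\}$. I would check these first on a small example, such as $k=2$, before writing the general count. A secondary point is Step~3 in the edge cases $\alpha\equiv\beta$ or $\{\alpha-\beta\}\in\{0,p-1\}$. Here the bound $2\le a\le p-5$ should ensure that $\Sym^{p-1}$ and $\det$ factors never equal $\sigma$ or $\sigma^{\co}$.
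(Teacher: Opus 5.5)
Your argument is correct and is essentially the paper's proof: the same projection-formula reduction to $\Ind_{\Iw}^K\pi$, the same fixed-point/free-orbit analysis of $\Ind_{\Iw_m}^{\Iw}\mathbf 1$ (Lemma~\ref{lem-rep1}), and the same congruence count; your Step~3 (which characters $\chi$ of $\bar T$ have $\sigma$ in $\Ind_{\Iw}^K\chi$) is the Frobenius-reciprocity dual of the paper's use of $\Proj_\sigma|_{\Iw}\simeq\Proj_{1\times\omega^a}\oplus\Proj_{\omega^a\times1}$. One small slip: with $\bar\theta_1\otimes\bar\theta_2=\lambda^{n_1}\mu^{n_2}$, the condition $i-k_2+n_2\equiv b$ becomes $k-2-i\equiv\{s+a\}$ rather than $i\equiv\{s+a\}$, but this is harmless because $i\mapsto k-2-i$ preserves $[0,k-2]$, so the count is still $\lfloor\frac{k-2-\{s+a\}}{p-1}\rfloor+1$.
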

	
	\begin{proof}
		Twisting by $\tilde\omega^{-b}$ replaces $\bar\rho$ by
		$\bar\rho\otimes\omega^{-b}$ and $\tau$ by
		$\tau\otimes\tilde\omega^{-b}|_{I_{\QQ_p}}$.
		On the representation-theoretic side, this tensors both
		$\bar\sigma(\w,\tau)$ and the Serre weights by $\det^{-b}$,
		and hence preserves the multiplicities in question.
		Since this twist also preserves $s$, we may assume that $b=0$.
		The definition of $s$ and \eqref{eq3} then give
		\begin{equation}\label{eq4}
			k_2-n_1\equiv s\pmod{p-1},
			\qquad
			n_2-k_1-1\equiv s+a\pmod{p-1}.
		\end{equation}
		Using the standard $K$-invariant $\calO$-lattice inside $\sigma(\w,\t)$ and the projection formula, we have
		\[
		\bar\sigma(\w,\tau)\simeq\Ind_{\Iw}^{K}\pi,\quad\textrm{where}\quad\pi=\Ind_{\Iw_m}^{\Iw}\left((\Sym^{k-2}\FF^2\otimes\operatorname{det}^{-k_2})|_{\Iw_m}\otimes(\bar\theta_1\otimes\bar\theta_2)|_{\Iw_m}\right).
		\]
		Since both $\Sym^{k-2}\FF^2\otimes\operatorname{det}^{-k_2}$ and $\bar\theta_1\otimes\bar\theta_2$ are $\Iw$-representations, the projection formula gives
		$
		\pi\simeq(\Sym^{k-2}\FF^2\otimes\operatorname{det}^{-k_2}\otimes(\bar\theta_1\otimes\bar\theta_2))|_{\Iw}\otimes\Ind_{\Iw_m}^{\Iw}\mathbf 1$.
		Let $\Proj_\sigma$ (resp. $\Proj_{\chi}$) be the projective envelope of $\sigma$ (resp.  a character $\chi$ of $\bar T$) in the category of profinite $\FF[\![K]\!]$-modules (resp. $\FF[\![\Iw]\!]$-modules). 
		Using the exactness of $\Hom_{\FF[\![K]\!]}(\Proj_\sigma,-)$ and Frobenius reciprocity, we have
		\begin{align*}
			\bigl[\bar{\sigma}(\mathbf{w},\tau)^{\mathrm{ss}}:\sigma\bigr]&=\dim_\FF\Hom_{\FF[\![K]\!]}(\Proj_\sigma,\bar\sigma(\w,\t))=\dim_\FF\Hom_{\FF[\![\Iw]\!]}(\Proj_\sigma|_{\Iw},\pi)
			\\&=\dim_{\FF}\Hom_{\FF[\![\Iw]\!]}\left(\Proj_{1\times\omega^a}\oplus\Proj_{\omega^a\times1},\pi\right)
			=\dim_{\FF} \left(\pi^{\bar T=1\times\omega^a}\oplus \pi^{\bar T=\omega^a\times1}\right),
		\end{align*}
		where we use $\Proj_\sigma|_{\Iw}\simeq\Proj_{1\times\omega^a}\oplus\Proj_{\omega^a\times1}$ from \cite[Lemma~6.4.1]{Pa04}. 	It remains to compute these eigenspace dimensions.
		
		Note that $\pi$ is a semisimple $\FF[\bar T]$-module as the order of $\bar T$ is coprime to $p$.
		By Lemma~\ref{lem-rep1}, we have an isomorphism of
		$\FF[\bar T]$-modules
		\[
		\pi\simeq
		(\Sym^{k_2-k_1-1}\FF^2\otimes\operatorname{det}^{-k_2})|_{\Iw}
		\otimes(\bar\theta_1\otimes\bar\theta_2)
		\otimes\bigoplus_{j=0}^{p-2}
		(\omega^j\otimes\omega^{p-1-j})^{\oplus m(j)}.
		\]
		We realize $\Sym^{k-2}\FF^2$ as the space of homogeneous
		polynomials of degree $k-2$ in $\FF[X,Y]$, equipped with
		the standard $K$-action.
		For $0\leq n\leq k-2$, the monomial $X^nY^{k-2-n}$,
		tensored with a vector in the $j$-th character summand
		is an eigenvector for $\diag(\alpha,\delta)\in\bar T$
		with eigenvalue (using \eqref{eq4})
		\[
		\alpha^{n-k_2+n_1+j}
		\delta^{k-2-n-k_2+n_2-j}
		=
		\alpha^{n-s+j}
		\delta^{\{s+a\}-n-j}.
		\]
		Hence computing the dimension of the
		$\bar T=\omega^a\times1$ eigenspace of $\pi$
		amounts to considering the solutions of the congruence
		\begin{equation}\label{eq1}
			n-s+j\equiv a\pmod{p-1},
			\qquad
			0\leq n\leq k-2,\quad 0\leq j\leq p-2.
		\end{equation}
		The other condition,
		$\{s+a\}-n-j\equiv0\pmod{p-1}$,
		is equivalent to \eqref{eq1}, since the central characters
		of $\sigma$ and $\pi$ agree.
		For each fixed $0\leq n\leq k-2$, there is a unique solution
		$j(n)\in\{0,\dots,p-2\}$ to \eqref{eq1}.
		When $j=0$, the number of solutions $0\leq n\leq k-2$
		to \eqref{eq1} is
		$
		\left\lfloor
		(k-2-\{s+a\})/(p-1)
		\right\rfloor+1$.
		Hence
		\begin{align*}
			\dim_\FF\pi^{\bar T=\omega^a\times1}
			&=
			\#\{0\leq n\leq k-2:j(n)\neq0\}\cdot m(j)
			+\#\{0\leq n\leq k-2:j(n)=0\}\cdot m(0)\\
			&=
			(k-1)m(j\neq0)
			+\#\{0\leq n\leq k-2:j(n)=0\}\cdot
			\bigl(m(0)-m(j\neq0)\bigr)\\
			&=
			(k-1)\frac{p^{m-1}-1}{p-1}
			+\left\lfloor
			\frac{k-2-\{s+a\}}{p-1}
			\right\rfloor+1,
		\end{align*}
		where we use $m(0)=m(j\neq0)+1=C(m)+1$
		from Lemma~\ref{lem-rep1}.
		The dimensions of
		$\pi^{\bar T=1\times\omega^a}$,
		$\pi^{\bar T=\omega^{-1}\times\omega^{a+1}}$,
		and $\pi^{\bar T=\omega^{a+1}\times\omega^{-1}}$
		(the last two of which are used to compute
		$\bigl[\bar\sigma(\w,\tau)^{\mathrm{ss}}:\sigma^{\co}\bigr]$)
		are obtained by similar arguments.
		The proposition follows.
	\end{proof}
	\begin{remark}
		Retain the notation of Proposition~\ref{prop-halo-mult}, and assume
		that $b=0$. When $m\leq 1$, we relate the multiplicity
		$\bigl[\bar{\sigma}(\mathbf{w},\tau)^{\mathrm{ss}}:\sigma\bigr]$
		to the dimension formulas in \cite[\S~4]{Lghost}, which play a key
		role in the definition of the \emph{local ghost series} therein.
		If $m=1$, put
		\[
		d_k^{\Iw}
		:=
		\dim_{\FF}\Hom_{\FF[\![\Iw]\!]}
		\bigl(\Proj_\sigma, \Sym^{k_2-k_1-1}\FF^2\otimes\operatorname{det}^{-k_2}\otimes\bar\theta_1\otimes\bar\theta_2)
		=
		\bigl[\bar{\sigma}(\mathbf{w},\tau)^{\mathrm{ss}}:\sigma\bigr].
		\]
		This is the quantity denoted by
		$d_k^{\Iw}(\e\cdot(1\times\omega^{2-k}))$
		in \cite[Proposition~4.1]{Lghost}, where
		$\e=\omega^{-s}\times\omega^{s+a}$. In particular, $d_k^{\Iw}=\frac{2k}{p-1}+O(1)$.
		If $m=0$, so that $\theta_1=\theta_2=\theta$, put
		\[
		d_k^{\ur}
		:=
		\dim_{\FF}\Hom_{\FF[\![K]\!]}
		\bigl(\Proj_\sigma,\bar{\sigma}(\mathbf{w},\tau)\bigr)=	\bigl[\bar{\sigma}(\mathbf{w},\tau)^{\mathrm{ss}}:\sigma\bigr].
		\]
		This is the quantity denoted by $d_k^{\ur}(\bar\theta|_{\Delta}\cdot\omega^{-k_2})$
		in \cite[(4.6.1)]{Lghost}; an explicit formula is given
		in \cite[Proposition~4.7]{Lghost}. In particular, $d_k^{\ur}=\frac{2k}{p^2-1}+O(1)$.
	\end{remark}
	\begin{remark}
		Keep the notation of the previous remark.
		In the case $\theta_1=\theta_2$, we may also define $d_k^{\Iw}$ by the same
		Hom-space formula and set $$	d_k^{\new}=d_k^{\Iw}-2d_k^{\ur}=\frac{2k}{p+1}+O(1),$$ which is denoted by $d_k^{\new}(\bar\theta|_{\Delta}\cdot\omega^{-k_2})$ in \cite[Proposition~4.7]{Lghost}.
		The quantity $d_k^{\new}$ accounts for the contribution of
		\emph{potentially semistable but not potentially crystalline}
		representations to the
		Breuil--M\'ezard multiplicity. More precisely, for a non-split $\bar\rho$, let
		$
		R^{\crr}:=R_{\bar\rho}^{\square,\psi}(\mathbf{w},\tau)$
		be the potentially crystabelline deformation ring considered above,
		and let $R$ be the corresponding potentially semistable deformation
		ring with the same $p$-adic Hodge type.
		Write $\operatorname{St}$ for the Steinberg representation of
		$\mathrm{GL}_2(\FF_p)$ over $\FF$, inflated to $K$.
		By the potentially semistable version of the
		Breuil--M\'ezard conjecture, we have (note that $\Ind_{\Iw}^{K}\mathbf 1\simeq\mathbf 1\oplus\operatorname{St}$)
		\[
		\begin{aligned}
			e(R/\vp)
			&=
			\bigl[
			\bigl(\bar{\sigma}(\mathbf{w},\tau)
			\otimes_{\FF}\operatorname{St}\bigr)^{\mathrm{ss}}
			:\sigma
			\bigr]=
			\dim_{\FF}\Hom_{\FF[\![K]\!]}
			\bigl(\Proj_\sigma,
			\bar{\sigma}(\mathbf{w},\tau)
			\otimes_{\FF}\operatorname{St}\bigr)\\
			&=
			\dim_{\FF}\Hom_{\FF[\![K]\!]}
			\bigl(\Proj_\sigma,
			\bar{\sigma}(\mathbf{w},\tau)
			\otimes_{\FF}\Ind_{\Iw}^{K}\mathbf 1\bigr)
			-
			\dim_{\FF}\Hom_{\FF[\![K]\!]}
			\bigl(\Proj_\sigma,\bar{\sigma}(\mathbf{w},\tau)\bigr)\\
			&=d_k^{\Iw}-d_k^{\ur}=d_k^{\ur}+d_k^{\new}=e(R^{\crr}/\vp)+d_k^{\new}.
		\end{aligned}
		\]
		Let $R^{\mathrm{ds}}$ be the reduced quotient of $R$ defined in  \cite[\S~5]{gee-geraghty-quaternion}, which corresponds to the
		union of irreducible components on which the monodromy operator is
		generically nonzero. Then we have $d_k^{\new}=e(R^{\operatorname{ds}}/\vp)$.
	\end{remark}
	
	\begin{lemma}
		\label{lem-rep1}
		Suppose that $m\geq1$. Put $C(m)=\frac{p^{m-1}-1}{p-1}$. As an $\FF[\bar T]$-module, we have
		\begin{equation}
			\label{eq-lem1}
			V:=\Ind_{\Iw_m}^{\Iw}\mathbf1
			\simeq\bigoplus_{i=0}^{p-2}
			(\omega^i\otimes\omega^{p-1-i})^{\oplus m(i)},
		\end{equation}
		where $m(0)=C(m)+1$, and $m(i\neq0):=m(i)=C(m)$ for $1\leq i\leq p-2$.
	\end{lemma}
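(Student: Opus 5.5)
The plan is to make $V=\Ind_{\Iw_m}^{\Iw}\mathbf 1$ explicit as a function space on the coset space $\Iw/\Iw_m$ and decompose it under $\bar T$.

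First, the coset space. The lower-triangular unipotent elements $\begin{pmatrix}1&0\\ c&1\end{pmatrix}$ with $c\in p\ZZ_p/p^m\ZZ_p$ form a set of representatives for $\Iw/\Iw_m$; this follows from the Iwahori decomposition $\Iw=\bar N(p\ZZ_p)\,B(\ZZ_p)$. Hence $V$ is identified with the space of $\FF$-valued functions on $p\ZZ_p/p^m\ZZ_p$, which has dimension $p^{m-1}$.

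Next, the torus action. Let $t=\diag([\lambda],[\mu])$. Then
\[
t\begin{pmatrix}1&0\\ c&1\end{pmatrix}t^{-1}=\begin{pmatrix}1&0\\ [\mu][\lambda]^{-1}c&1\end{pmatrix},
\]
and $t$ lies in $\Iw_m$. So $\bar T$ acts on $V$ through the permutation representation coming from the action of $\Delta$ on the set $p\ZZ_p/p^m\ZZ_p$ by multiplication by $[\mu/\lambda]$. Since $\bar T$ has order prime to $p$, $V$ is semisimple, and it suffices to decompose this permutation module into characters.

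Now decompose the orbits. The element $c=0$ is a fixed point and contributes the trivial character $\mathbf 1=\omega^0\otimes\omega^{0}$. Because $\Delta$ embeds into $(\ZZ/p^{m}\ZZ)^\times$ and has order prime to $p$, it acts freely on the nonzero elements $c$: if $\zeta c=c$ with $c\neq0$ of valuation $v<m$, then $\zeta\equiv1\pmod{p^{m-v}}$, which forces $\zeta=1$. So the $p^{m-1}-1$ nonzero elements split into $C(m)=\frac{p^{m-1}-1}{p-1}$ free orbits. Each free orbit gives the regular representation of $\Delta$, and pulled back along $(\lambda,\mu)\mapsto\mu/\lambda$ this is $\bigoplus_{i=0}^{p-2}\omega^{-i}\otimes\omega^{i}=\bigoplus_i\omega^i\otimes\omega^{p-1-i}$, using that $\omega$ has order $p-1$ on $\Delta$.

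Adding these up, each $i\neq0$ occurs with multiplicity $C(m)$, and $i=0$ occurs with multiplicity $C(m)+1$. This is exactly \eqref{eq-lem1}.

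The only delicate point is checking that the coset representatives are correct, equivalently that $\Iw_m$ contains the whole upper Borel over $\ZZ_p$ and that the lower unipotent entry modulo $p^m$ exactly parametrizes $\Iw/\Iw_m$. For the action formula one also has to verify that the action on functions is by right translation; the inverse this introduces only relabels $i\mapsto -i$, which leaves the decomposition unchanged.
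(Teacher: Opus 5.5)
Your proposal is correct and follows essentially the same route as the paper. Both arguments represent the cosets by lower unipotent matrices with entry in $p\ZZ_p/p^m\ZZ_p$, and show that the torus has a single fixed coset and $C(m)$ free orbits on the remaining ones.

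The only difference is cosmetic. The paper uses right cosets $\Iw_m\backslash\Iw$ and restricts to $\diag(\Delta,1)$. You track the full $\bar T$-action through $(\lambda,\mu)\mapsto\mu/\lambda$, which has the small advantage of showing directly why only the characters $\omega^i\otimes\omega^{p-1-i}$ occur.
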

	
	\begin{proof}
		Since $\bar T$ is abelian and has order prime to $p$, the $\FF[\bar T]$-module $V$ admits a
		decomposition of the form \eqref{eq-lem1}.
		To determine the multiplicities $m(i)$, it suffices to restrict to
		$\Delta$, where we identify $\Delta$ with the subgroup $\operatorname{diag}(\Delta,1)$
		of $\bar T$.
		
		We claim that the action of $\Delta$ on the coset space
		$\Iw_m\backslash\Iw$ has one fixed point $\{0\}$ and
		$(p^{m-1}-1)/(p-1)$ free orbits (i.e. the stabilizer is trivial), from which we obtain
		\[
		V|_{\Delta}\cong
		\mathbf1\oplus
		\bigl(\FF[\FF_p^{\times}]\bigr)^{\oplus\frac{p^{m-1}-1}{p-1}},
		\qquad
		\FF[\FF_p^{\times}]\cong\bigoplus_{r=0}^{p-2}\omega^r.
		\]
		This gives the required values of $m(i)$.
		To prove the claim, use the coset decomposition
		\[
		\Iw=\bigsqcup_{t\in\ZZ_p/p^{m-1}\ZZ_p}\Iw_m u(t),
		\qquad
		u(t):=\begin{pmatrix}1&0\\pt&1\end{pmatrix}.
		\]
		Under this decomposition, right multiplication by
		$\operatorname{diag}([\lambda],1)$, for $\lambda\in\FF_p^{\times}$,
		sends $u(t)$ to $u([\lambda]t)$, where $[\lambda]$ denotes the Teichm\"uller
		lift of $\lambda$. Indeed,
		\[
		u(t)\begin{pmatrix}[\lambda]&0\\0&1\end{pmatrix}
		=\begin{pmatrix}[\lambda]&0\\pt[\lambda]&1\end{pmatrix}
		=\begin{pmatrix}[\lambda]&0\\0&1\end{pmatrix}u([\lambda]t).
		\]
		Clearly, the element $t=0$ is fixed. If $\lambda\neq1$, then $[\lambda]-1$ is
		a unit in $\ZZ_p$, so no nonzero element of
		$\ZZ_p/p^{m-1}\ZZ_p$ is fixed by such $\lambda$.
		This proves the claim and the lemma.
	\end{proof}
	
	\subsection{Setup}
	
	For the remainder of the paper, we fix data $(\bar\rho,\t,\w,\psi)$ satisfying
	the following conditions and assume that
	$R_{\bar\rho}^{\square,\psi}(\w,\t)\neq0$:
	\begin{itemize}
		\item $\t=\theta_1\oplus\theta_2$ is a wildly ramified principal series
		type with conductor $m\geq2$;
		\item $\w=(k_1,k_2)$ is a pair of Hodge--Tate weights with $k_1<k_2$;
		\item $\psi:G_{\QQ_p}\rightarrow\calO^{\times}$ is a continuous
		character satisfying
		$\psi|_{\I_{\QQ_p}}=\chi_{\cycl}^{-k_1-k_2-1}\det\tau$ and
		$\bar\psi\omega=\det\bar\rho$;
		\item $\bar\rho$ is reducible and very generic.
	\end{itemize}
	
	\section{Slopes of crystabelline representations}
	\label{section-calculating-crystabelline-slopes}
	In this section, we first construct a rigid analytic map
	$\calX\to\AAA^{1,\an}$, called the \emph{slope map}, where
	$\calX$ is the rigid generic fiber of
	$R_{\bar\rho}^{\square,\psi}(\w,\t)$;
	see \S~\ref{subsection-construction-of-slope-map}.
	We then compute the associated slopes using the ghost series
	of \cite{Lghost2} in \S~\ref{subsection-NP}, which relies on the local ghost theorem \cite[Theorem~2.6]{Lghost2}.
	The number of distinct slopes gives a \emph{naive lower bound}
	for the number of connected components of $\calX$.
	In \S~\ref{subsection-counting-slopes}, we compare this bound
	with the Breuil--M\'ezard multiplicity $e(\Rkis/\vp)$.
	For non-split $\bar\rho$, the two are equal
	(Proposition~\ref{prop-BM-mult=halo-mult}).
	For split $\bar\rho$, the lower bound is half the total
	multiplicity, up to a correction accounting for the boundary
	slopes (Corollary~\ref{cor-split-compare-mult}).
	
	\subsection{A slope map on Kisin deformation spaces}
	\label{subsection-construction-of-slope-map}
	Let $x$ be an $E'$-point of the rigid generic fiber
	\[
	\calX:=\calX_{\bar\rho}^{\square,\psi}(\w,\t)
	=\bigl(\Spf R_{\bar\rho}^{\square,\psi}(\w,\t)\bigr)^{\rig}.
	\]
	In what follows, we first attach a \emph{slope} $\goths(x)$ to the crystabelline
	$E'$-representation $\rho_x$. We then construct a rigid analytic map
	$\calX\to\AAA^{1,\an}$ from which these slopes can be recovered by
	taking $p$-adic valuations; see Proposition~\ref{prop-slope-map}.
	
	Consider
	$\mathbb{D}_{\operatorname{pcrys}}(\rho_x)=E'e_1\oplus E'e_2$,
	where $\I_{\QQ_p}$ acts on $E'e_i$ through $\theta_i$ and $\phi$ acts
	on this line with eigenvalue $\a_i$. Since $\theta_1\neq\theta_2$,
	the labeling of the inertia characters determines the labeling of the
	Frobenius eigenvalues.
	
	The representation $\rho_x$ is trianguline. It is either a direct sum
	of two crystabelline characters or is described by Colmez's parameter
	spaces $\scrS_+^{\ord}\cup\scrS_+^{\cris}$; see
	\cite[Proposition~3.5 and \S~4.5]{colmez-triangulline}.
	In the split case, $\bar\rho$ is necessarily split. In the non-split
	case, $\rho_x$ admits two triangulations by
	\cite[Proposition~4.13]{colmez-triangulline}. We use the following one,
	as in \cite[\S~7]{Lghost2}:
	\begin{equation}
		\label{eq:trangulation}
		\begin{aligned}
			0\longrightarrow\mathcal{R}_{E'}\bigl(x^{-k_1}(\theta_2,\a_2)\bigr)
			\longrightarrow\mathbb{D}_{\mathrm{rig}}^{\dagger}(\rho_x)
			\longrightarrow\mathcal{R}_{E'}\bigl(x^{-k_2}(\theta_1,\a_1)\bigr)
			\longrightarrow0.
		\end{aligned}
	\end{equation}
	Here $(\theta_i,\a_i)$ denotes the character of $\QQ_p^\times$ whose
	restriction to $\ZZ_p^\times$ is $\theta_i$ and whose value at $p$ is
	$\a_i$.
	
	\begin{definition}
		\label{def-slope-of-rep}
		The \emph{(crystabelline) slope} of $\rho_x$ is defined to be the $p$-adic valuation
		\[
		\goths(x):=k_2-v_p(\a_1).
		\]
		We simply say that a slope $\goths$ \emph{occurs in $\calX$} if
		$\goths(x)=\goths$ for some closed point $x\in\calX$.
		Write
		$
		\delta_1:=x^{-k_1}(\theta_2,\a_2)$, and
		$
		\delta_2:=x^{-k_2}(\theta_1,\a_1)$.
		Then
		we have $\goths(x)=v_p(\delta_1(p))$.
	\end{definition}
	
	The following standard result characterizes reducibility in terms of
	the crystabelline slope.
	
	\begin{lemma}
		\label{lem-crystabelline-slopes}
		Let $\rho_x$ be a crystabelline representation as above. Then $\goths(x)\in[0,k_2-k_1]$, and
		$\rho_x$ is reducible if and only if
		$\goths(x)\in\{0,k_2-k_1\}$.
	\end{lemma}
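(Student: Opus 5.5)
The plan is to work on the $(\varphi,\Gamma)$-module side with the triangulation \eqref{eq:trangulation}, and to use weak admissibility (Colmez's theory of trianguline representations) to constrain $v_p(\a_1)$.

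\textbf{Weight bookkeeping.} Since $\det\rho_x=\psi\chi_{\cycl}$ and the Hodge--Tate weights are $(k_1,k_2)$ with our convention, the filtered $\phi$-module $\mathbb{D}_{\operatorname{pcrys}}(\rho_x)$ has total Hodge number equal to $t_N=v_p(\a_1\a_2)$. This pins down the sum $v_p(\a_1)+v_p(\a_2)$ in terms of $k_1+k_2$. (Equivalently, $v_p(\delta_1(p))+v_p(\delta_2(p))=0$ for the determinant character up to the twist.) We then have the equivalent formulation $\goths(x)=v_p(\delta_1(p))$ of Definition~\ref{def-slope-of-rep}.

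\textbf{The bound.} The lines $E'e_1$ and $E'e_2$ are $\phi$-stable and $\I_{\QQ_p}$-stable, since the inertia characters differ. Hence they are exactly the sub-objects to test in the weak admissibility (Newton above Hodge) condition, applied in the category of filtered $(\phi,\Gal(\QQ_p(\mu_{p^n})/\QQ_p))$-modules. For each line $L=E'e_i$, we need $t_N(L)=v_p(\a_i)\geq t_H(L)$, where $t_H(L)$ is the Hodge weight of the induced filtration. That weight is $\geq$ the smaller jump and equals the larger jump precisely when $L$ coincides with the filtration step $\mathrm{Fil}$. With the sign conventions, the smaller jump corresponds to $k_1$ and the larger to $k_2$. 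The two inequalities $v_p(\a_1)\geq k_1$ and $v_p(\a_2)\geq k_1$, combined with the sum identity, give $k_1\leq v_p(\a_1)\leq k_2$. Therefore $\goths(x)=k_2-v_p(\a_1)\in[0,k_2-k_1]$.

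\textbf{Reducibility.} $\rho_x$ is reducible if and only if some $\phi$- and inertia-stable line is a weakly admissible sub-object, i.e.\ $t_N(L)=t_H(L)$. The only candidates are $E'e_1$ and $E'e_2$.

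If $v_p(\a_1)=k_2$ (slope $0$), then $v_p(\a_2)=k_1$. Since $t_H(E'e_2)\geq k_1$ always, weak admissibility forces equality, so $E'e_2$ is a sub-object and $\rho_x$ is reducible. Symmetrically, slope $k_2-k_1$ gives $v_p(\a_1)=k_1$, so $E'e_1$ splits off.

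Conversely, suppose $L=E'e_i$ is sub-admissible. Then $v_p(\a_i)=t_H(L)\in\{k_1,k_2\}$, and the sum identity puts $v_p(\a_1)$ at an endpoint.

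The only delicate point is getting the sign conventions right (HT weight of $\chi_{\cycl}$ is $-1$, and which Hodge jump is which). I would check this against Colmez's parametrization $\scrS_+^{\cris}$, where the parameter $w(\delta)$ and the condition $0<v_p(\delta_1(p))<w$ for irreducibility are stated directly. In fact one could simply cite \cite[\S~4.5]{colmez-triangulline}: points of $\scrS_+^{\cris}$ with $0\leq v_p(\delta_1(p))\leq k_2-k_1$ are irreducible exactly in the open range. The reducible (direct sum or $\scrS_+^{\ord}$) cases correspond to the endpoints.
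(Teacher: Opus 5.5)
Your proof is correct, but it takes a different route from the paper for the reducibility criterion.

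The bound $\goths(x)\in[0,k_2-k_1]$ is obtained in the paper exactly as you do: weak admissibility of $\mathbb{D}_{\operatorname{pcrys}}(\rho_x)$ together with the identity $v_p(\a_1)+v_p(\a_2)=k_1+k_2$.

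For the equivalence, the paper uses weak admissibility of a line only when $\rho_x$ is a direct sum. For non-split reducible $\rho_x$ it invokes Colmez's classification: slope $0$ corresponds to $\scrS_0^{\cris}$ and slope $k_2-k_1$ to $\scrS_+^{\ord}$. For irreducible $\rho_x$ it cites \cite[Proposition~4.8]{colmez-triangulline} to place the parameter in $\scrS_*^{\cris}$, hence in the open range.

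You instead treat all cases uniformly through the Colmez--Fontaine equivalence. Subrepresentations of $\rho_x$ correspond to $\phi$- and $\Gal(\QQ_p(\mu_{p^n})/\QQ_p)$-stable $E'$-lines $D'$ with $t_N(D')=t_H(D')$. The hypothesis $\theta_1\neq\theta_2$ leaves only $E'e_1$ and $E'e_2$ as candidates. Their Hodge numbers lie in $\{k_1,k_2\}$, because the filtration on $L\otimes_{\QQ_p}D$ is Galois-stable and Galois permutes the factors of $L\otimes_{\QQ_p}E'$ transitively; this justifies your claim that the weight is either the smaller or the larger jump.

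What your route buys is that it is more elementary and self-contained, needing no trianguline machinery. It also identifies which line is the subobject: $E'e_2$ (i.e. $\delta_1$) at slope $0$, and $E'e_1$ (i.e. $\delta_1'$) at slope $k_2-k_1$. That is exactly the information used later in the proof of Proposition~\ref{prop-crystabelline-slopes}. The paper's route instead phrases the dichotomy directly in terms of the parameters $(\delta_1,\delta_2,\infty)$, which is the language in which the ghost-series results of \cite{Lghost2} are stated.

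One wording fix: at slope $k_2-k_1$ the line $E'e_1$ gives a subobject, i.e. a subrepresentation, not necessarily a direct summand. So ``splits off'' should read ``is a subobject''; the conclusion that $\rho_x$ is reducible is unaffected.
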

	
	\begin{proof}
		We include a proof for completeness. Weak admissibility of
		$\mathbb{D}_{\operatorname{pcrys}}(\rho_x)$ gives
		\begin{equation}
			\label{eq-proof1}
			\goths(x)\in[0,k_2-k_1]
			\qquad\text{and}\qquad
			v_p(\a_1)+v_p(\a_2)=k_1+k_2.
		\end{equation}
		Suppose first that $\rho_x$ is reducible. If it is split, then the
		line $D_1:=E'e_1$, with its induced filtration and $(\phi,\I_{\QQ_p})$-action,
		corresponds to a subrepresentation of $\rho_x$ with Hodge--Tate weight
		in $\{k_1,k_2\}$. Weak admissibility of $D_1$ therefore implies that
		$v_p(\a_1)\in\{k_1,k_2\}$, and hence
		$\goths(x)\in\{0,k_2-k_1\}$.
		If $\rho_x$ is non-split, the assertion follows from Colmez's
		classification in \cite[\S~4]{colmez-triangulline}. More precisely,
		for the trianguline parameter $s=(\delta_1,\delta_2,\infty)$ of $\rho_x$, with
		$\delta_1,\delta_2$ as in Definition~\ref{def-slope-of-rep}, we have
		\[
		\goths(x)=0\iff s\in\scrS_0^{\cris},
		\qquad
		\goths(x)=k_2-k_1\iff s\in\scrS_+^{\ord}.
		\]
		Conversely, if $\rho_x$ is irreducible, then
		its trianguline parameter $s\in\scrS_*^{\cris}$ by
		\cite[Proposition~4.8]{colmez-triangulline}, so
		$\goths(x)\in(0,k_2-k_1)$.
	\end{proof}
	
	Let $\calV$ be the universal Galois representation over $\calX$,
	obtained by restricting the universal
	$R_{\bar\rho}^{\square,\psi}$-representation $\rho^{\univ}$.
	We use $\calV$ to interpolate the eigenvalue $\a_1$ analytically over $\calX$.
	
	\begin{proposition}
		\label{prop-slope-map}
		There exists a rigid analytic map
		\[
		\lambda:=\lambda_{(\bar\rho,\psi,\tau,\w)}:
		\calX=\bigl(\Spf R_{\bar\rho}^{\square,\psi}(\w,\t)\bigr)^{\rig}
		\longrightarrow\AAA^{1,\an}
		\]
		such that, for every closed point $x\in\calX$, $k_2-v_p(\lambda(x))$ is equal to the slope $\goths(x)$ of $\rho_x$.
	\end{proposition}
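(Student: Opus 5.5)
We will construct $\lambda$ by interpolating $\mathbb{D}_{\operatorname{pcrys}}$ in families and reading off the Frobenius eigenvalue on its $\theta_1$-isotypic line. Let $n\geq m$ be large enough that $\theta_1,\theta_2$ are trivial on $1+p^n\ZZ_p$, and put $L=\QQ_p(\mu_{p^n})$. Every point of $\calX$ is crystalline over $L$ with Hodge--Tate weights $\{k_1,k_2\}$. The first step is to invoke the family version of $\mathbb{D}_{\operatorname{cris}}$ for representations that are crystalline with bounded Hodge--Tate weights. Kisin's construction of the potentially crystalline locus \cite{kisin-deform} yields, over $\calX$ (or over each affinoid in an admissible cover), a locally free coherent sheaf $\mathcal{D}$ of rank $2[L_0:\QQ_p]$ after base change to $L_0\otimes$. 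This sheaf is equipped with a semilinear Frobenius $\phi$ and a commuting action of $\Gal(L/\QQ_p)$, and its formation commutes with specialization to closed points. Alternatively, one can use Berger--Colmez / Kedlaya--Pottharst--Xiao: for a family that is pointwise crystalline over $L$ with fixed weights, $\bigl(\mathcal{D}^\dagger_{\rig}(\calV|_{G_L})[1/t]\bigr)^{\Gamma_L}$ is locally free and base-change compatible. Since $L/\QQ_p$ is totally ramified, $L_0=\QQ_p$, so $\mathcal{D}$ is a rank $2$ locally free $\calO_{\calX}$-module carrying a linear $\phi$ and a $\Gal(L/\QQ_p)\simeq(\ZZ/p^n)^\times$-action.

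Second, we decompose by the inertial action. The group $(\ZZ/p^n)^\times$ is finite and $E$ contains the values of the $\theta_i$, so the idempotent
\[
e_{\theta_1}=\frac{1}{\#(\ZZ/p^n)^\times}\sum_{g}\theta_1(g)^{-1}g
\]
acts on $\mathcal{D}$ and commutes with $\phi$. At each closed point $x$, the fiber is $\mathbb{D}_{\operatorname{pcrys}}(\rho_x)=E'e_1\oplus E'e_2$ with $\theta_1\neq\theta_2$, so $e_{\theta_1}\mathcal{D}\otimes k(x)$ is one-dimensional. Hence $\mathcal{D}_1:=e_{\theta_1}\mathcal{D}$ is a direct summand of $\mathcal{D}$ with constant fiber rank $1$, and so it is a line bundle. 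The $\calO_{\calX}$-linear endomorphism $\phi|_{\mathcal{D}_1}$ of this line bundle is multiplication by a global section $\lambda\in\calO(\calX)$, obtained by gluing on a trivializing cover; the scalar does not depend on the choice of trivialization. This $\lambda$ defines the map $\calX\to\AAA^{1,\an}$. By base change, $\lambda(x)=\a_1$, so Definition~\ref{def-slope-of-rep} gives $k_2-v_p(\lambda(x))=\goths(x)$. We should also check that the normalization of $\phi$ on $\mathbb{D}_{\operatorname{pcrys}}$ (the $\phi$ of $B_{\cris}$, with no twist) matches the one in \eqref{eq:trangulation}. Since both use the same $\mathbb{D}_{\operatorname{pcrys}}$, no extra correction factor appears.

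The main obstacle is the first step: making sure a family $\mathbb{D}_{\operatorname{cris}}$ over $\calX$ is available that is locally free and compatible with base change at all closed points, rather than only generically. We will try to cite Kisin's result that, over the potentially crystalline locus, the $\mathfrak{S}$-module / $\phi$-module construction produces a vector bundle whose fiber at each point is $\mathbb{D}_{\operatorname{cris}}$. Failing that, we would use the result of Berger--Colmez that $\mathbb{D}_{\operatorname{cris}}$ of a family of crystalline representations with fixed Hodge--Tate weights is locally free of full rank on affinoids and commutes with base change. Regularity (in particular reducedness) of $\calX$ (Theorem~\ref{thm-Kisin}) ensures that $\lambda$ is determined by its values at closed points, so the local constructions glue.
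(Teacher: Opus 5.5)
Your proposal is correct and follows essentially the same route as the paper. Both build a family version of $\mathbb{D}_{\operatorname{pcrys}}(\calV)$ over $\calX$, cut out the $\theta_1$-isotypic line bundle using the inertia action, and let $\phi$ act on it by a global analytic function. The differences are only in the inputs: your explicit idempotent $e_{\theta_1}$ replaces the paper's appeal to \cite[Proposition~7.3.1]{rozensztajn-locus}, and the paper takes the base-change-compatible family $\mathbb{D}_{\operatorname{pcrys}}$ from \cite[Theorem~7.2.1]{rozensztajn-locus} rather than directly from Berger--Colmez or Kisin.
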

	
	\begin{proof}
		It suffices to construct a global analytic function
		$\alpha\in\Gamma(\calX,\calO_{\calX})$ whose value at each closed
		point $x$ is the Frobenius eigenvalue $\a_1$ on the
		$\theta_1$-isotypic line of
		$\mathbb{D}_{\operatorname{pcrys}}(\rho_x)$.
		We follow the proof of
		\cite[Proposition~7.5.3]{rozensztajn-locus}, applied to the framed
		Kisin deformation space $\calX$.
		
		By \cite[Theorem~7.2.1]{rozensztajn-locus}, the family $\calV$ gives
		rise to a family $\mathbb{D}_{\operatorname{pcrys}}(\calV)$ over
		$\calX$ whose specialization at each closed point $x$ is
		$\mathbb{D}_{\operatorname{pcrys}}(\rho_x)$.
		Since $\theta_1\neq\theta_2$, applying
		\cite[Proposition~7.3.1]{rozensztajn-locus} to the inertia action
		gives a $\theta_1$-isotypic line bundle. This line bundle is preserved
		by $\phi$, since the inertia and Frobenius actions commute. Hence $\phi$ acts on this line bundle by multiplication by a
		global analytic function $\alpha$, which has the required properties.
	\end{proof}
	
	\begin{corollary}
		\label{cor-naive-lower-bound}
		Let $\calX^{\cl}$ denote the set of closed points of $\calX$.
		Suppose that the number of distinct slopes occurring in $\calX$, i.e.
		$\#(\{\goths(x):x\in\calX^{\cl}\})$, is finite.
		Then $\goths(x)$ is constant on each connected (equivalently,
		irreducible) component of $\calX$, and
		\begin{equation}
			\label{eq-proof2}
			e\bigl(R_{\bar\rho}^{\square,\psi}(\w,\t)/\vp\bigr)
			\geq
			\#\left\{\begin{array}{c}
				\text{connected components}\\
				\text{of }\calX
			\end{array}\right\}
			\geq\#(\{\goths(x):x\in\calX^{\cl}\}).
		\end{equation}
	\end{corollary}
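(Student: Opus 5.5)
The plan is to combine the slope map of Proposition~\ref{prop-slope-map} with a continuity argument, and then feed the result into Lemma~\ref{lem-inequality-of-mult}.

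\emph{Step 1: the slope is locally constant.} Let $\calY$ be a connected component of $\calX$. By Proposition~\ref{prop-slope-map} we have a global analytic function $\alpha=\lambda$ on $\calY$. It is nowhere vanishing on closed points, since $v_p(\a_1)=k_2-\goths(x)$ is finite. By hypothesis, the set $S=\{v_p(\alpha(x)) : x\in\calY^{\cl}\}$ is finite. For each $s\in S$, consider the locus
\[
\calY_s=\{y\in\calY : |\alpha(y)|=p^{-s}\}.
\]
This is an admissible open subset, being a finite intersection of rational domains $\{|\alpha|\le |p|^{s}\}$ and $\{|\alpha|\ge|p|^s\}$, at least after enlarging $E$ so that $p^s\in|E^\times|$. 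The finitely many values are separated by gaps, so we can choose radii strictly between consecutive values of $S$. The sets $\{r_i<|\alpha|<r_{i+1}\}$ then form a finite admissible covering of $\calY$ by pairwise disjoint admissible opens. Each is a finite union of affinoid annuli-type domains, and every point lies in one of them because every nonempty admissible open contains closed points. Connectedness of $\calY$ forces all but one of these pieces to be empty, so $\goths$ is constant on $\calY$. The main subtlety here is making sure the covering is admissible, i.e.\ that the closed points (which all have valuations in $S$) control the whole space. I would handle this by working on affinoid subdomains of $\calY$, where the maximum modulus principle and the finiteness of the value set on maximal points give the admissible disjoint decomposition directly.

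\emph{Step 2: counting.} By Step~1, the map $\{\text{components}\}\to\{\goths(x)\}$ sending a component to its constant slope is well defined. It is surjective, since every closed point lies in some component. This gives the second inequality in \eqref{eq-proof2}. The equivalence of connected and irreducible components, and the bijection with $\Min(R)$, come from the discussion in \S~\ref{subsection-components-and-BM-mult}, which relies on the regularity of $R[1/\vp]$ and on \cite{conrad}.

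\emph{Step 3: the first inequality.} By Lemma~\ref{lem-inequality-of-mult}, $e(R/\vp)=\sum_i e(R/(\p_i,\vp))$. Each summand is at least $1$, because $R/(\p_i,\vp)$ is a nonzero Noetherian local ring and Hilbert--Samuel multiplicities of nonzero local rings are positive integers. Hence $e(R/\vp)\ge\#\Min(R)$, which equals the number of connected components of $\calX$, and this gives the first inequality in \eqref{eq-proof2}.
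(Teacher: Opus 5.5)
Your argument is correct. Steps~2 and~3 coincide with the paper's: the first inequality is Lemma~\ref{lem-inequality-of-mult} together with the bijection between $\Min(R)$ and the components of $\calX$, and the second follows from the slope being constant on components.

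The difference lies in how you prove local constancy.

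\emph{The paper's route.} It passes to $\calX^{\Berk}$, where $f=k_2+\log_p|\lambda^{\Berk}|$ is a genuinely continuous real-valued function. Density of classical points gives $f(\calX^{\Berk})\subseteq\overline{f(\calX^{\cl})}=f(\calX^{\cl})$, a finite set. So $f$ is locally constant by point-set topology alone, and no admissibility question ever arises.

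\emph{Your route.} You stay in the rigid G-topology. This avoids Berkovich theory, at the cost of checking admissibility by hand. Your affinoid-local fix does this correctly, and it is cleanest stated as follows. On an affinoid $U\subseteq\mathcal{Y}$, pick rational radii $r_i$ strictly between consecutive elements of $\{p^{-s}:s\in S\}$. Since $|\alpha(x)|$ never equals any $r_i$, the sets $U\cap\{r_i\le|\alpha|\le r_{i-1}\}$ are finitely many pairwise disjoint Laurent domains covering $U$, hence an admissible covering. Write them as $\{|\alpha^n|\le|p^m|\}$, $\{|\alpha^n/p^m|\ge1\}$, etc.; then no enlargement of $E$ is needed, and so no descent of connectedness. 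These coverings glue to an admissible disjoint covering of $\mathcal{Y}$, and connectedness finishes the argument.

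Your first formulation should be corrected in two places:
\begin{enumerate}
\item The strict-inequality pieces are not finite unions of affinoids globally. This holds only on each affinoid, where strict and non-strict conditions agree.
\item The appeal to nonempty admissible opens containing closed points is superfluous. Every point of a rigid space is a classical point, so the pieces cover $\mathcal{Y}$ set-theoretically for free; the only real issue is admissibility.
\end{enumerate}
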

	
	\begin{proof}
		The first inequality is Lemma~\ref{lem-inequality-of-mult}.
		For the second, consider the induced map of Berkovich spaces
		$
		\lambda^{\Berk}:\calX^{\Berk}\longrightarrow\mathbb A_E^{1,\Berk}$.
		Consider the continuous function
		\[
		f:\calX^{\mathrm{Berk}}\longrightarrow\mathbb R,
		\qquad
		x\longmapsto k_2+\log_p|\lambda^{\Berk}(x)|.
		\]
		Since the subspace of classical points (i.e. $\calX^\cl$) is dense in
		$\calX^{\mathrm{Berk}}$ and the image $\{f(x)=\goths(x):x\in\calX^\cl\}$ is closed in $\mathbb R$, we deduce that $f$ is a continuous surjection from $\calX^{\Berk}$ to the finite set $\{\goths(x):x\in\calX^\cl\}$. Then $f$ is constant on each connected component of $\calX^{\mathrm{Berk}}$, and thus the number of connected components of $\calX^{\mathrm{Berk}}$ (hence of $\calX$) is at least $\#(\{\goths(x):x\in\calX^{\cl}\})$.
	\end{proof}
	
	We will show in Proposition~\ref{prop-crystabelline-slopes} that the slope
	set $\{\goths(x):x\in\calX^{\cl}\}$ is \emph{finite}. Corollary~\ref{cor-naive-lower-bound} then implies that the number
	of distinct crystabelline slopes occurring in $\calX$ gives a \emph{naive
		lower bound} for the Breuil--M\'ezard multiplicity of $\Rkis$. Moreover, we can use slopes to separate $\Min(\Rkis)$ as follows. Note that for $\gothp\in\Min(\Rkis)$, the \emph{slope $\goths(\p)$}, namely, the slope of some (equivalently, any) closed point of $\Spf(\Rkis/\p)^{\rig}$ is well-defined. Then we have the following result.
	
	\begin{corollary}
		\label{rmk-1}
		Write $R:=\Rkis$.
		Assume that $\{\goths(x):x\in\calX^{\cl}\}$ is finite. Then we have  the following decomposition
		\[
		\Min(R)=\bigsqcup_{\goths\in\{\goths(x):x\in\calX^{\cl}\}}\Sigma_{\goths},\quad\textrm{where}\quad\Sigma_{\goths}=\{\p\in\Min(R):\goths(\p)=\goths\}.
		\]
		Moreover, for each slope
		$\goths$ occurring in $\calX$, set
		\[
		\gotha_\goths
		:=\bigcap_{\substack{x\in\calX^{\cl}\\\goths(x)=\goths}}
		(\gothm_x\cap \Rkis),
		\qquad
		R_\goths:=\Rkis/\gotha_\goths,
		\]
		where $\gothm_x$ is the maximal ideal of $\Rkis[1/\vp]$ corresponding
		to $x$. Then $\gotha_\goths=\cap_{\p\in\Sigma_{\goths}}\p$, and a closed point $x$ of
		$\Spec \Rkis[1/\vp]$ lies in $\Spec R_\goths[1/\vp]$ if and only if
		$\goths(x)=\goths$.
	\end{corollary}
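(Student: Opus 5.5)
The plan has three steps: show that the slope is constant on each component, deduce the partition of $\Min(R)$, and then identify $\gotha_\goths$ with the intersection $\bigcap_{\p\in\Sigma_\goths}\p$. Throughout, write $R=\Rkis$.

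\textbf{Constancy of the slope.} By Corollary~\ref{cor-naive-lower-bound}, since the slope set is finite, $\goths(x)$ is constant on each connected component of $\calX$. By \S~\ref{subsection-components-and-BM-mult}, these connected components are exactly the $\Spf(R/\p)^{\rig}$ for $\p\in\Min(R)$, and they are pairwise disjoint. Hence $\goths(\p)$ is well defined for each $\p$. Every closed point lies on exactly one component, and every component contains closed points. Therefore $\Min(R)$ is the disjoint union of the sets $\Sigma_\goths$, and every $\Sigma_\goths$ with $\goths$ occurring in $\calX$ is nonempty.

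\textbf{Identification of $\gotha_\goths$.} I will use the product decomposition $R[1/\vp]\cong\prod_i(R/\p_i)[1/\vp]$. A closed point $x$ of $\Spec R[1/\vp]$ lies on exactly one factor $(R/\p_i)[1/\vp]$, and $\goths(x)=\goths(\p_i)$. Therefore
\[
\{x:\goths(x)=\goths\}=\bigsqcup_{\p\in\Sigma_\goths}\{\text{closed points of }\Spec (R/\p)[1/\vp]\}.
\]
Intersecting, we get
\[
\gotha_\goths=\bigcap_{\p\in\Sigma_\goths}\ \bigcap_{x\in V(\p)^{\cl}}(\gothm_x\cap R).
\]

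The key point is then that for each $\p$, the intersection of $\gothm_x\cap R$ over closed points $x$ of $\Spec(R/\p)[1/\vp]$ equals $\p$. For this I will use that $R/\p$ is a domain which is $\calO$-flat. The ring $(R/\p)[1/\vp]$ is Jacobson, since it is a quotient of $R[1/\vp]$ for a complete local Noetherian $\calO$-algebra, and its closed points are exactly those with residue field finite over $E$. A Jacobson domain has zero Jacobson radical, so the intersection of its maximal ideals is $0$. Pulling back to $R/\p$, which injects into $(R/\p)[1/\vp]$ by flatness, gives $\p$. This Jacobson input, which also ensures that the closed points of the rigid space $\calX$ match the maximal ideals of $R[1/\vp]$, is the only step needing care.

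\textbf{The final assertion.} By the previous step, $R_\goths[1/\vp]=R[1/\vp]/\gotha_\goths R[1/\vp]$ is the product of the factors $(R/\p)[1/\vp]$ with $\p\in\Sigma_\goths$. Here I use that localization commutes with finite intersections, and that the ideals $\p_iR[1/\vp]$ are pairwise comaximal. So a closed point $x$ lies in $\Spec R_\goths[1/\vp]$ exactly when its unique component $\p$ satisfies $\p\in\Sigma_\goths$, that is, when $\goths(x)=\goths$.
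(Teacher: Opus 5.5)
Your proof is correct and follows the same route as the paper. Corollary~\ref{cor-naive-lower-bound} gives constancy of the slope on components, hence the partition, and the Jacobson property of $R[1/\vp]$ identifies $\gotha_\goths$ with $\bigcap_{\p\in\Sigma_\goths}\p$; you also make explicit the steps the paper calls clear, namely contracting back to $R$ using that $R/\p$ is $\vp$-torsion-free, and using comaximality of the ideals $\p R[1/\vp]$ for the final assertion.
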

	\begin{proof}
		By Corollary~\ref{cor-naive-lower-bound}, the slope is constant
		on each irreducible component of $\calX$, which gives the asserted partition of $\Min(R)$. Moreover, $R[1/\vp]$ is Jacobson by
		\cite[Lemma~15.10.5]{stack}, so each of its minimal primes
		is the intersection of the maximal ideals containing it. Now this corollary is clear.
	\end{proof}
	\subsection{Newton polygons of crystabelline slopes}
	\label{subsection-NP}
	We now apply the results of \cite[\S~7]{Lghost2} to compute the
	crystabelline slopes occurring in $\calX$. Let
	$\Delta\simeq\FF_p^\times$ denote the torsion subgroup of
	$\ZZ_p^\times$.
	
	\begin{notation}
		\label{notation-1}
		For the fixed wildly ramified type $\t=\theta_1\oplus\theta_2$ of
		conductor $m\geq2$ and Hodge--Tate weights $k_1<k_2$, introduce the
		following notation.
		\begin{enumerate}
			\item Define the character
			\[
			\e:=\bar\theta_1|_\Delta\cdot\omega^{-k_2}
			\times\bar\theta_2|_\Delta\cdot\omega^{-k_1-1}
			\]
			of $\begin{pmatrix}\Delta&0\\0&\Delta\end{pmatrix}$. We assume that $\e$ is \emph{relevant} to $\sigma=\sigma_{a,b}$, so that
			$
			\e=\omega^{-s_\e+b}\times\omega^{s_\e+a+b}$ for some
			$s_\e\in\{0,\dots,p-2\}$.
			This assumption is necessary for
			$R_{\bar\rho}^{\square,\psi}(\w,\t)$ to be nonzero. In particular, we have
			\begin{equation}
				\label{eq-1}
				\bar\theta_1|_\Delta\cdot\omega^{-k_2}=\omega^{-s_\e+b},
				\qquad
				\bar\theta_2|_\Delta\cdot\omega^{-k_1-1}=\omega^{s_\e+a+b}.
			\end{equation}
			
			\item The character $\e$ is also relevant to
			$\sigma^{\co}=\sigma_{p-3-a,a+b+1}$, so that $\e=\omega^{-s'_\e+b'}\times\omega^{s'_\e+a'+b'}$, where
			$
			a'=p-3-a$, $b'=a+b+1$, and $s'_\e=\{s_\e+a+1\}$.
			\item Set
			$
			w_*:=\theta_2\theta_1^{-1}(\exp(p))
			\exp\bigl(p(k_2-k_1-1)\bigr)-1\in\gothm_{\CC_p}$.
			Then $v_p(w_*)=1/(p^{m-2}(p-1))$.
		\end{enumerate}
	\end{notation}
	
	Let $G_\sigma^{(\e)}(w,t)\in\calO[w][\![t]\!]$ be the \emph{ghost
		series} of \cite[Definition~2.5]{Lghost2}. Roughly speaking, the
	results of \cite[\S~7]{Lghost2} relate the slopes of
	$\NP(G_\sigma^{(\e)}(w,-))$ to the values $v_p(\delta_1(p))$
	at points $(\delta_1,\delta_2,\rho_x)$ of the \emph{trianguline
		deformation space} $\calX_{\bar\rho}^{\square,\tri}$ with the
	corresponding weight parameter $w=(\delta_1\delta_2^{-1}\chi_{\cycl}^{-1})(\exp(p))-1$. In particular, specializing to $w=w_*$ allows us to compute the crystabelline
	slopes of $\calX$. Proposition~\ref{prop-crystabelline-slopes}
	gives the precise relation between these slopes and the slopes of
	$\NP(G_\sigma^{(\e)}(w_*,-))$ in $[0,k_2-k_1]$.
	
	We do not recall the definition of the ghost series. Instead,
	we give an explicit description of its Newton polygon at $w_*$.
	
	\begin{definition}
		\label{def-halo-slopes}
		For $\e=\omega^{-s_\e+b}\times\omega^{s_\e+a+b}$, set
		$
		\eta_1:=\min\{s_\e,\{s_\e+a\}\}$, and
		$\eta_2:=\max\{s_\e,\{s_\e+a\}\}$.
		Define a sequence $(a_n)_{n\geq1}$ by
		\[
		a_{2\ell+1}:=\eta_1+\ell(p-1),
		\quad\textrm{and}\quad
		a_{2\ell+2}:=\eta_2+\ell(p-1)
		\qquad(\ell\geq0).
		\]
		For $n\geq1$, the $n$th \emph{Hodge ratio} is
		$
		\lambda_n^{(\e)}:=a_n-\lfloor a_n/p\rfloor$.
		The \emph{Newton ratios} $\goths_n^{(\e)}$ are defined by the
		following formulas, for $n\geq0$.
		If $a+s_\e<p-1$, set
		\begin{equation}
			\goths_{n+1}^{(\e)}-\lambda_{n+1}^{(\e)}
			=\begin{cases}
				1,& n-2s_\e\equiv1,3,\dots,2a+1\pmod{2p},\\
				-1,& n-2s_\e\equiv2,4,\dots,2a+2\pmod{2p},\\
				0,&\text{otherwise}.
			\end{cases}
		\end{equation}
		If $a+s_\e\geq p-1$, set
		\begin{equation}
			\goths_{n+1}^{(\e)}-\lambda_{n+1}^{(\e)}
			=\begin{cases}
				1,& n-2s_\e\equiv2,4,\dots,2a+2\pmod{2p},\\
				-1,& n-2s_\e\equiv3,5,\dots,2a+3\pmod{2p},\\
				0,&\text{otherwise}.
			\end{cases}
		\end{equation}
	\end{definition}
	
	\begin{proposition}
		\label{prop-halo-slopes}
		For $w_*$ as above and $n\geq1$, the $n$th slope of
		$\NP(G_\sigma^{(\e)}(w_*,-))$ is
		$
		\goths_n^{(\e)}\cdot v_p(w_*)
		=\goths_n^{(\e)}/(p^{m-2}(p-1))$.
		The sequence $(\goths_n^{(\e)})_{n\geq1}$ is strictly increasing,
		and $\goths_n^{(\e)}=0$ if and only if $s_\e=0$ and $n=1$.
	\end{proposition}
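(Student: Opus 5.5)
This is essentially a citation of the structure of ghost series at points $w_*$ with $v_p(w_*)<1$ (the "halo" region), from \cite{Lghost2}. The plan is to invoke the description of $\NP(G_\sigma^{(\e)}(w,-))$ near the boundary of weight space. The key input is that the zeros $w_k$ of the coefficients $g_n(w)$ all lie in the closed disc $v_p(w)\geq 1$. Hence at $w_*$, with $v_p(w_*)=1/(p^{m-2}(p-1))<1$, each factor $w_*-w_k$ has valuation exactly $v_p(w_*)$. Consequently $v_p(g_n(w_*))=\deg g_n\cdot v_p(w_*)$.

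So the Newton polygon at $w_*$ is $v_p(w_*)$ times the lower convex hull of the points $(n,\deg g_n)$. Its slopes are the successive differences $\deg g_{n}-\deg g_{n-1}$, provided these differences are nondecreasing, which we check.

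The main step is therefore to compute $\deg g_n$. Here I would use the explicit formula in \cite{Lghost2}. There $\deg g_n$ is expressed through the dimensions $d_k^{\Iw}$ and $d_k^{\ur}$, or equivalently through the Hodge polygon built from the sequence $a_n$ of degrees of the power basis $e_i$ of the relevant induced module. The expected outcome is that $\deg g_n-\deg g_{n-1}=\lambda_n^{(\e)}+(\text{a correction in }\{-1,0,1\})$, where $\lambda_n^{(\e)}=a_n-\lfloor a_n/p\rfloor$. The correction is governed by the position of $n-2s_\e$ modulo $2p$, as in Definition~\ref{def-halo-slopes}.

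Concretely, I would split into the two cases $a+s_\e<p-1$ and $a+s_\e\ge p-1$. These determine whether $\eta_1=s_\e$ or $\eta_1=\{s_\e+a\}$. In each case I would track, as $n$ runs through one period of length $2p$, how the multiples of $p$ interleave between the two arithmetic progressions $\eta_1+\ell(p-1)$ and $\eta_2+\ell(p-1)$. This interleaving is what produces the $\pm1$ pattern. Alternatively, if \cite[Prop.~2.? / \S5]{Lghost2} already states the halo slopes (the "$v_p(w)<1$" spectral halo theorem), I would simply quote it and match notation via \eqref{eq-1}.

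Strict monotonicity then follows by comparing $\goths_{n+1}^{(\e)}-\goths_n^{(\e)}=(\lambda_{n+1}-\lambda_n)+(\text{difference of corrections})$. One has $\lambda_{n+1}-\lambda_n\ge 1$ in all cases, using $2\le a\le p-5$. A correction drop of $-2$ occurs only when the $+1$ is immediately followed by $-1$, and at those indices one checks that $\lambda_{n+1}-\lambda_n\ge 3$. The exceptions are boundary indices, where it is $\ge 2$ or an adjacent $0$ case applies. This careful bookkeeping, using the very-generic bound on $a$ to keep $\eta_2-\eta_1$ and $p-1-(\eta_2-\eta_1)$ at least $2$, is the main obstacle.

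Finally, $\goths_n^{(\e)}=0$ forces $\lambda_n=0$ with correction $0$, or $\lambda_n=1$ with correction $-1$. The latter is excluded because correction $-1$ requires $n\ge 2$, where $a_n\ge 2$. The former means $a_n=0$, i.e.\ $n=1$ and $\eta_1=0$. Then $s_\e=0$ (since $\{s_\e+a\}=0$ with $n=1$ lands in a $+1$ correction in the second case), and conversely $n=0$ gives correction $0$ when $s_\e=0$.
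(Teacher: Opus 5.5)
Your proposal is correct and is essentially the paper's argument, which simply cites \cite[Definition--Proposition~2.13]{Lghost2}. Your halo reduction is right: $v_p(w_*-w_k)=v_p(w_*)<1$ for every ghost zero $w_k$, so the slopes are $v_p(w_*)(\deg g_n-\deg g_{n-1})$, and the identification $\deg g_n-\deg g_{n-1}=\goths_n^{(\e)}$ is exactly what that reference supplies. Your direct checks from Definition~\ref{def-halo-slopes} of strict monotonicity and of the criterion for $\goths_n^{(\e)}=0$ are also correct: the only drop of $2$ in the correction happens across a gap $p-1-a\geq4$ in the $a_n$, where $\lambda$ jumps by at least $3$.
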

	
	\begin{proof}
		This follows from  \cite[Definition--Proposition~2.13]{Lghost2}.
	\end{proof}
	
	\begin{remark}
		\begin{enumerate}
			\item We have
			$
			\NP(G_{\sigma_{a,b}}^{(\e)}(w,-))
			=\NP\bigl(G_{\sigma_{a,0}}^{(\e\cdot(\omega^{-b}\times\omega^{-b}))}(w,-)\bigr)$; see \cite[Remark~2.30]{Lghost}.
			Thus the determinant twist $\omega^b$ does not enter the formulas in
			Definition~\ref{def-halo-slopes} and Proposition~\ref{prop-halo-slopes}.
			
			\item 	Up to the possible slope zero (of multiplicity one) part, $\NP(G_{\sigma}^{(\e)}(w,-))$ coincides with $\NP(G_{\sigma^{\co}}^{(\e)}(w,-))$; see \cite[Proposition~2.15, Remark~2.16]{Lghost2}. Write $\e_0=\omega^{-s'_\e}\times\omega^{s'_\e+a'}$. By (1), we also have $$\NP(G^{(\e)}_{\sigma_{p-3-a,a+b+1}}(w,-))=\NP(G^{(\e\cdot(\omega^{-(a+b+1)}\times\omega^{-(a+b+1)}))}_{\sigma_{p-3-a,0}}(w,-))=\NP(G^{(\e_0)}_{\sigma_{p-3-a,0}}(w,-)).$$
		\end{enumerate}
	\end{remark}
	
	The following proposition is largely contained in \cite[\S~7]{Lghost2}.
	We give a detailed treatment of the \emph{boundary slopes}
	$0$ and $k_2-k_1$, particularly when $\bar\rho$ is split.
	Our argument relies on the local ghost theorem
	\cite[Theorem~2.6]{Lghost2}, so we assume $p\geq11$.
	\begin{proposition}
		\label{prop-crystabelline-slopes}
		Assume that $p\geq11$.
		Let $\calX=\bigl(\Spf R_{\bar\rho}^{\square,\psi}(\w,\t)\bigr)^{\rig}$
		and $k_0:=\{k_2-k_1-1\}$.
		\begin{enumerate}
			\item A slope $\goths\in(0,k_2-k_1)$ occurs in $\calX$ if and
			only if it is a slope of $\NP(G_\sigma^{(\e)}(w_*,-))$.
			
			\item Suppose that $\bar\rho$ is non-split. Then:
			\begin{enumerate}
				\item the slope $0$ occurs in $\calX$ if and only if $s_\e=0$;
				\item the slope $k_2-k_1$ occurs in $\calX$ if and only if
				$k_0=\{s_\e+a\}$.
			\end{enumerate}
			
			\item Suppose that $\bar\rho$ is split. Then:
			\begin{enumerate}
				\item the slope $0$ occurs in $\calX$ if and only if $s_\e=0$
				or $s'_\e=\{s_\e+a+1\}=0$;
				\item the slope $k_2-k_1$ occurs in $\calX$ if and only if
				$k_0=\{s_\e+a\}$ or $k_0=\{s'_\e+a'\}=\{s_\e-1\}$.
			\end{enumerate}
		\end{enumerate}
	\end{proposition}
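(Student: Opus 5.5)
For part (1), the plan is to invoke the main results of \cite[\S~7]{Lghost2}, which rest on the local ghost theorem \cite[Theorem~2.6]{Lghost2} and hence need $p\geq11$. These results describe the image in $(\delta_1,\delta_2)$-space of the trianguline deformation space $\calX_{\bar\rho}^{\square,\tri}$ attached to the Serre weight $\sigma$. Over a fixed weight $w$, the values $v_p(\delta_1(p))$ of irreducible (non-ordinary, non-critical) points are exactly the slopes of $\NP(G_\sigma^{(\e)}(w,-))$.

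I would proceed in four steps.
\begin{enumerate}
\item Take a closed point $x\in\calX$ with $\goths(x)\in(0,k_2-k_1)$. By Lemma~\ref{lem-crystabelline-slopes}, $\rho_x$ is irreducible, so its triangulation \eqref{eq:trangulation} is the unique one with parameter in $\scrS_*^{\cris}$.
\item Check that $(\delta_1,\delta_2,\rho_x)$ is a point of $\calX_{\bar\rho}^{\square,\tri}$ lying over the $\e$-component. This uses \eqref{eq-1}: the restrictions $\delta_1|_\Delta$ and $\delta_2|_\Delta$ recover $\e$ after the twist by $\chi_{\cycl}$.
\item Check that its weight is $w=(\delta_1\delta_2^{-1}\chi_{\cycl}^{-1})(\exp(p))-1=w_*$. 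This is a direct computation from Notation~\ref{notation-1}(3).
\item Conversely, every slope of $\NP(G_\sigma^{(\e)}(w_*,-))$ in the open interval is realized. Choose $\delta_1,\delta_2$ with the prescribed restrictions to $\ZZ_p^\times$ and with $v_p(\delta_1(p))$ equal to that slope. Colmez's construction then gives a crystabelline $\rho_x$ of type $(\w,\t)$, and \cite[\S~7]{Lghost2} guarantees that its reduction is $\bar\rho$ (or lies in the same pseudo-character class, which for framed rings of the right $\bar\rho$ is what we need). After adjusting the central character to $\psi$, this point lies in $\calX$.
\end{enumerate}
I also need that the Newton polygon slopes in $(0,k_2-k_1)$ really are the relevant ones, with no noncritical-versus-critical subtlety. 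This holds because crystabelline points with slope strictly less than $k_2-k_1$ are noncritical.

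For the boundary slopes in parts (2) and (3), I would work directly with reducible lifts rather than through the ghost series.

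\emph{Slope $0$.} By Lemma~\ref{lem-crystabelline-slopes}, slope $0$ means $v_p(\a_1)=k_2$. Equivalently, $\rho_x$ has a crystabelline subcharacter $x^{-k_2}(\theta_1,\a_1)$ with unit-root twist, so it sits in an extension with the Hodge--Tate weight $k_2$, $\theta_1$-character as sub.
\begin{itemize}
\item Non-split $\bar\rho$: the sub of $\bar\rho$ must be $\unr(\bar\b_1)\omega^{a+b+1}$. Matching the reduction of $\theta_1\chi_{\cycl}^{-k_2}$ (restricted to inertia) against $\omega^{a+b+1}$, and using \eqref{eq-1}, this becomes a congruence on $s_\e$. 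I expect it to reduce to $s_\e=0$ after accounting for the convention relating $\chi_{\cycl}^{-k_2}$ and $\omega^{-k_2}$. Existence then follows by choosing a crystalline-type extension class reducing to the given nonzero $*$. The genericity $2\leq a\leq p-5$ makes the relevant $H^1$ one-dimensional over the right range, so the extension class can be lifted.
\item Split $\bar\rho$: the sub may reduce to either $\bar\chi_1$ or $\bar\chi_2$. The second option gives the companion condition $s'_\e=0$, matching Notation~\ref{notation-1}(2).
\end{itemize}

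\emph{Slope $k_2-k_1$.} Here $v_p(\a_1)=k_1$ and the ordinary filtration has $\theta_2$ in the sub. The same character matching yields $k_0=\{s_\e+a\}$, or, in the split case, the companion condition $k_0=\{s_\e-1\}$.

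The main obstacle is the boundary analysis. I must verify that these character-matching congruences are exactly as stated; in particular, the appearance of $k_0=\{k_2-k_1-1\}$ has to come out of the inertial weights. I must also check that when $\bar\rho$ is non-split, a reducible lift with the required sub genuinely exists and is non-split with the correct reduction. For this I would first try the explicit description of crystalline-type $\Ext^1$ of characters and the fact that the reduction map on $H^1_f$ is surjective under genericity.
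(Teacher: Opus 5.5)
Your outline follows the paper's route: part (1) from \cite[\S~7]{Lghost2}, which the paper simply cites as \cite[Corollary~7.8]{Lghost2}, and the boundary slopes by matching sub-characters and constructing reducible lifts. The genuine gap is in the boundary analysis, which is the part the paper actually proves: you have the sub-character backwards.

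At a point with $\goths(x)=0$ one has $v_p(\delta_1(p))=v_p(\delta_2(p))=0$, so both parameters are units at $p$. But in \eqref{eq:trangulation} the sub is $\mathcal{R}(\delta_1)$. So the Galois sub-character of $\rho_x$ is $\delta_1=x^{-k_1}(\theta_2,\a_2)$, of type $\theta_2$ and Hodge--Tate weight $k_1$. The character $\delta_2=x^{-k_2}(\theta_1,\a_1)$ is the quotient; it is a sub only if $\rho_x$ splits, and a split $\rho_x$ cannot reduce to the non-split $\bar\rho$. Equivalently, in $\mathbb{D}_{\operatorname{pcrys}}(\rho_x)$ the $\theta_2$-line has $v_p(\a_2)=k_1$, so weak admissibility forces its Hodge number to be $k_1$ and it is always a sub-object. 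The $\theta_1$-line can be a sub-object only if it is the Hodge line.

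This is not a matter of convention. The sub of $\rho_x$ must reduce to $\bar\chi_1$, and $\bar\theta_2|_\Delta\omega^{-k_1}=\omega^{a+b+1}$ gives $s_\e=0$ by \eqref{eq-1}. Your choice, $\bar\theta_1|_\Delta\omega^{-k_2}=\omega^{-s_\e+b}=\omega^{a+b+1}$, gives $\{s_\e+a+1\}=0$, i.e.\ $s'_\e=0$, which is the condition for the opposite non-split representation. The same swap occurs at slope $k_2-k_1$. There $(\delta_1,\delta_2,\infty)\in\scrS_+^{\ord}$, and the sub is $\delta'_1=x^{-k_1}(\theta_1,\a_1)$ from Colmez's second triangulation \cite[Proposition~4.13]{colmez-triangulline}. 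The condition $\bar\theta_1|_\Delta\omega^{-k_1}=\omega^{a+b+1}$ is what yields $k_0=\{s_\e+a\}$. A $\theta_2$-sub, necessarily of weight $k_2$ since $v_p(\a_2)=k_2$, would instead yield $k_0=\{s_\e-1\}$.

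Your existence step fails for the same reason. A non-split extension with the weight-$k_2$ character as sub is a class in $H^1$ of a character of Hodge--Tate weight $k_2-k_1>0$ in the paper's convention. That character is a negative Tate twist, up to a character of finite order on inertia, and no nonzero such class is potentially crystalline. So no crystabelline lift of the shape you propose exists, whatever the genericity.

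The construction that works is the following.
\begin{itemize}
\item Take $\chi_i=\chi_{\cycl}^{-k_i}(\theta_{3-i},\b_i)$ with $\b_i$ lifting $\bar\b_i$ and $\chi_1\chi_2=\psi\chi_{\cycl}$. Then $s_\e=0$ is exactly the condition $\chi_i\equiv\bar\chi_i$.
\item $\Ext^1(\chi_2,\chi_1)$ is one-dimensional because $\theta_1\neq\theta_2$.
\item Every non-split class is crystabelline, because $\chi_1\chi_2^{-1}$ has Hodge--Tate weight $k_1-k_2\leq-1$ and $\theta_1\neq\theta_2$ excludes a non-crystabelline extension.
\item A primitive integral class reduces to the unique non-split $\bar\rho$.
\item Weak admissibility gives $v_p(\a_1)=k_2$, i.e.\ slope $0$.
\end{itemize}
Use $\chi'_i=\chi_{\cycl}^{-k_i}(\theta_i,\b_i)$ in the same way for slope $k_2-k_1$.

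For split $\bar\rho$ you still owe two things. One is the ``if'' direction: rescale such a lift by $\diag(\vp^n,1)$ to get split reduction. The other, in the ``only if'' direction, is the case of split $\rho_x$, where both characters are subs.

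Finally, step 4 of part (1) as written suggests that any $\delta_1(p)$ of the prescribed valuation gives reduction $\bar\rho$, which is false. The existence of such points over $w_*$ is exactly the content of the local ghost theorem, as packaged in \cite[Corollary~7.8]{Lghost2}. Landing in the framed ring of the given $\bar\rho$, rather than in its pseudo-character class, also needs an argument: Ribet's lemma plus uniqueness of the non-split extension, or enlarging $E'$ in the split case.
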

	
\begin{proof}
	(1) follows from \cite[Corollary~7.8]{Lghost2}.
	We first prove (2). Write
	\[
	\bar\rho\simeq
	\begin{pmatrix}
		\unr(\bar\b_1)\omega^{a+b+1}&*\neq0\\
		0&\unr(\bar\b_2)\omega^b
	\end{pmatrix}
	=:\begin{pmatrix}\bar\chi_1&*\neq0\\0&\bar\chi_2\end{pmatrix}.
	\]
	For each closed point $x\in\calX$,  $\rho_x$
	has the triangulation \eqref{eq:trangulation}, with parameters
	\[
	\delta_1=x^{-k_1}(\theta_2,\a_2),
	\quad\textrm{and}\quad
	\delta_2=x^{-k_2}(\theta_1,\a_1).
	\]
	
	If $0$ is a crystabelline slope of $\calX$, say $\goths(x)=0$, then $v_p(\delta_1(p))=0$. In this case $\delta_1$ is a subrepresentation of $\rho_x$ and hence  $\bar\theta_2|_{\Delta}\cdot\omega^{-k_1}\equiv\bar\delta_1|_{\Delta}\equiv\omega^{a+b+1}\pmod{\varpi}$, which implies that $s_{\e}=0$ by \eqref{eq-1}. If $k_2-k_1$ is a crystabelline slope of $\calX$, then $(\delta_1,\delta_2,\infty)\in\scrS_+^\ord$ as we proved in Lemma~\ref{lem-crystabelline-slopes}. Hence \cite[Proposition~4.13]{colmez-triangulline} provides another triangulation of the corresponding representation $\rho_x$, namely $(\delta_1'=(x^{-k_1})(\theta_1,\a_1),\delta_2'=(x^{-k_2})(\theta_2,\a_2),\infty)$. In this case, $v_p(\delta_1'(p))=0$ and similarly we have
	$\bar\theta_1|_{\Delta}\cdot\omega^{-k_1}\equiv\bar\delta'_1|_{\Delta}\equiv\omega^{a+b+1}\pmod{\varpi}$, which implies that $k_0=\{s_\e+a\}$. This proves one direction of (a) and (b).
	
	For the other direction of (2), we directly construct the required crystabelline lifts. (a) If $s_\e=0$, then consider the crystabelline characters for $i=1,2$,  $\chi_i:=\chi_{\cycl}^{-k_i}\cdot(\theta_{3-i},\b_i)$, where $\b_i\in\calO^\times$ is a lift of $\bar\b_i$ and $\chi_1\chi_2=\psi\chi_{\cycl}$ (this is possible because $(\psi\chi_{\cycl}^{k_1+k_2+1})|_{\I_{\QQ_p}}=\theta_1\theta_2$). We first construct a closed point on $\calX$. Note that $s_{\e}=0$ ensures that $\chi_i$ has mod-$\varpi$ reduction $\bar\chi_i$. Since $\bar\rho$ is the unique non-split extension (up to isomorphism) of $\bar\chi_2$ by $\bar\chi_1$, any non-split extension in $\Ext^1_{G_{\QQ_p}}(\chi_2,\chi_1)$ admits a $G_{\QQ_p}$-invariant $\calO$-lattice whose mod-$\varpi$ reduction is $\bar\rho$.  Note that $\theta_1\neq\theta_2$ implies that $\Ext^1_{G_{\QQ_p}}(\chi_2,\chi_1)$ is one dimensional. Moreover, each such non-split extension is indeed crystabelline, since both $\chi_i$ are crystabelline and the Hodge--Tate weight of $\chi_1\chi_2^{-1}$ is at most $-1$ (note that $\theta_1\neq\theta_2$ excludes the non-crystabelline extension). Hence we obtain a closed point $x$ of $\calX$, and we next calculate its slope. Let $\a_1$ be the eigenvalue of the $\phi$-action on the subspace of $\mathbb{D}_{\operatorname{pcrys}}(\rho_x)$ where $I_{\QQ_p}$ acts through $\theta_1$. Then weak admissibility gives $v_p(\a_1)=k_2$, which implies that $\goths(x)=k_2-v_p(\a_1)=0$. (b) If $k_0=\{s_\e+a\}$, we take crystabelline characters $\chi_i':=\chi_{\cycl}^{-k_i}\cdot(\theta_{i},\b_{i})$ with the same $\b_i$, so that $\chi_1'\chi_2'=\psi\chi_{\cycl}$, and a similar argument provides a closed point $x'$ of $\calX$ that is an extension of $\chi_2'$ by $\chi_1'$. In this case, $v_p(\a_1')=k_1$ and hence the slope $\goths(x')=k_2-v_p(\a'_1)=k_2-k_1$. This completes the proof of (2).
	
	We now prove (3). The ``if" part follows from (2), as for a non-split representation $\rho_x$ constructed in (2), we may choose a $G_{\QQ_p}$-invariant $\calO$-lattice (by conjugating $\begin{pmatrix}
		\vp^n&0\\0&1
	\end{pmatrix}$ for some $n\in\ZZ$), such that its mod-$\varpi$ reduction is split. For the other direction, let $x$ be a closed point of $\calX$ with slope $\goths(x)=0$ (resp. $k_2-k_1$). If $\rho_x$ is non-split, then $\delta_1$ (resp. $\delta_1'$) in the second paragraph is a subrepresentation of $\rho_x$; hence its mod-$\vp$ reduction is one of $\{\bar\chi_1,\bar\chi_2\}$ (note that $\delta_1\equiv\bar\chi_i$ if and only if $\delta_2\equiv\bar\chi_{3-i}$ by comparing the determinants and a similar equivalence holds for $\delta'_i$).  Comparing the power of $\omega$ yields the required equalities in (a)(b). If $\rho_x$ is split, then $\mathbb{D}_{\operatorname{pcrys}}(\rho_x)$ is also split. Hence both $\delta_1$ (resp. $\delta'_1$) and $\delta_2$ (resp. $\delta_2'$) are subrepresentations of $\rho_x$ by checking weak admissibility. Similarly we also obtain the required equalities in this case. We complete the proof of (3).
\end{proof}
	\begin{remark}
		Combining this proposition with Lemma~\ref{lem-crystabelline-slopes}, we observe that the reducible locus of $\calX$ is nonempty if and only if $s_{\e}=0$ or $k_0=\{s_\e+a\}$ for non-split $\bar\rho$, and $s_{\e}=0$, or $k_0=\{s_\e+a\}$, or $s'_{\e}=0$, or $k_0=\{s'_\e+a'\}$ for split $\bar\rho$. See \cite[Lemma~6.1]{hu-paskunas} for a similar statement of the split case.
		
		Let $\bar\rho_1$ and $\bar\rho_2$ be the two non-split extensions, whose associated Serre weights are $\sigma=\sigma_{a,b}$ and $\sigma^{\co}=\sigma_{p-3-a,a+b+1}$, respectively. Let $\bar\rho_0$ be the semisimplification of $\bar\rho_i$. Denote by $\calX_i$ the corresponding Kisin deformation space for $i=0,1,2$. From the above proof, the boundary slopes $0$ and $k_2-k_1$ may occur simultaneously in $\calX_i$. However, for $\goths\in\{0,k_2-k_1\}$, it occurs in at most one of $\{\calX_1,\calX_2\}$, since
		our genericity assumption excludes $a=p-2$. See Corollary~\ref{cor-split-compare-mult} for more details.
	\end{remark}
	
	\subsection{Counting crystabelline slopes}
	\label{subsection-counting-slopes}In this subsection, we assume that $p\geq11$.
	By Proposition~\ref{prop-crystabelline-slopes}, the crystabelline
	slopes occurring in $\calX$ form a \emph{finite} set. Thus
	Corollary~\ref{cor-naive-lower-bound} gives the following \emph{naive lower bound} for the Breuil--M\'ezard multiplicity $e(\Rkis/\vp)$
	\[
	e'(\calX):=e'\bigl(\Spf(\Rkis)^{\rig}\bigr)
	:=\#\{\goths(x):x\in\calX^{\cl}\}.
	\]
	We now compute this bound explicitly using
	Definition~\ref{def-halo-slopes}. The main result for non-split
	$\bar\rho$ is the following proposition; the split case is
	treated in Corollary~\ref{cor-split-compare-mult}.
	
	\begin{proposition}
		\label{prop-BM-mult=halo-mult}Assume that $p\geq11$.
		If $\bar\rho$ is non-split, then we have
		$$
		e(\Rkis/\vp)=e'\bigl(\Spf(\Rkis)^{\rig}\bigr).$$
	\end{proposition}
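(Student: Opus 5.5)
Both sides of the identity are explicit, and the plan is to compute each as a function of $(k,s_\e,a,m)$ and compare. By Lemma~\ref{lem-type-Serre-wt}(3), $e(\Rkis/\vp)=[\bar\sigma(\w,\t)^{\operatorname{ss}}:\sigma]$. Proposition~\ref{prop-halo-mult} then gives this multiplicity as
\[
2(k-1)C(m)+\lfloor (k-2-s)/(p-1)\rfloor+\lfloor (k-2-\{s+a\})/(p-1)\rfloor+2,
\]
where $k=k_2-k_1+1$ and $C(m)=\frac{p^{m-1}-1}{p-1}$. First I check from \eqref{eq-1} and \eqref{eq4} that the integer $s$ there equals $s_\e$, since both satisfy $\bar\theta_1|_\Delta\omega^{-k_2}=\omega^{b-s}$.

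On the slope side, Proposition~\ref{prop-crystabelline-slopes}(1),(2) gives
\[
e'(\calX)=N+[s_\e=0]+[k_0=\{s_\e+a\}],
\]
where $N$ is the number of slopes of $\NP(G_\sigma^{(\e)}(w_*,-))$ lying in the open interval $(0,k_2-k_1)=(0,k-1)$. Since the sequence $(\goths_n^{(\e)})$ is strictly increasing by Proposition~\ref{prop-halo-slopes}, distinct $n$ give distinct slopes. So $N$ counts the indices $n$ with $0<\goths_n^{(\e)}v_p(w_*)<k-1$, that is, with
\[
0<\goths_n^{(\e)}<(k-1)p^{m-2}(p-1).
\]
The remaining work is to count such $n$ using Definition~\ref{def-halo-slopes}.

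For the count, note that $\goths_n^{(\e)}=\lambda_n^{(\e)}+\delta_n$ with $\delta_n\in\{-1,0,1\}$. Also $\lambda_n=a_n-\lfloor a_n/p\rfloor$, and $a_n$ runs through the two residue classes $\eta_1,\eta_2$ modulo $p-1$, one term of each per block of length $p-1$. Hence $\lambda_n$ increases by roughly $p-1$ over every two steps, up to the drop $\lfloor a_n/p\rfloor$. Let $B=(k-1)p^{m-2}(p-1)$. I count the $n$ with $\lambda_n<B$: this is the number of integers $a\ge0$ in the classes $\eta_1,\eta_2$ with $a-\lfloor a/p\rfloor<B$. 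A main term of order $2B/(p-1)\cdot p/(p-1)$ should give $2(k-1)(C(m)+\text{const})$; precisely, the map $a\mapsto a-\lfloor a/p\rfloor$ is a bijection onto the integers not $\equiv -1\pmod p$ shifted suitably. The count must come out to $2(k-1)C(m)+2(k-1)/(p-1)$-type terms, matching the floor expressions $\lfloor (k-2-s_\e)/(p-1)\rfloor$ and $\lfloor (k-2-\{s_\e+a\})/(p-1)\rfloor$. I would then correct by the $\delta_n$ near the threshold $B$, and subtract $1$ if $\goths_1=0$, which happens exactly when $s_\e=0$. That subtraction cancels against the $[s_\e=0]$ term in $e'(\calX)$.

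The main obstacle is the boundary analysis at $B$. One has to decide exactly whether the index $n$ with $\lambda_n$ closest to $B$ has $\goths_n<B$, $=B$, or $>B$. The case $\goths_n=B$ is a slope equal to $k_2-k_1$ in the Newton polygon, which is excluded from (1); it must be matched with the indicator $[k_0=\{s_\e+a\}]$ together with the "$+2$" and floor terms. I would handle this by writing $k-2=q(p-1)+k_0$ and reducing, via the periodicity of $\delta_n$ modulo $2p$ in $n$, to a finite case check. The cases are split by whether $a+s_\e<p-1$ and by the position of $k_0$ relative to $s_\e$ and $\{s_\e+a\}$. The hypothesis $2\le a\le p-5$ keeps the $\pm1$ corrections from interacting with the threshold in degenerate ways. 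If the direct count becomes unwieldy, I would instead appeal to the comparison in \cite[\S 2, \S 7]{Lghost2}, which identifies the number of ghost zeros at $w_*$ below weight $k$ with the dimension $d_k$ given by the Hom-space formula in the proof of Proposition~\ref{prop-halo-mult}. That identification would reduce the boundary analysis to checking the two endpoint slopes.
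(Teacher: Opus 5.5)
Your plan matches the paper's proof. The ingredients are the same: $s=s_\e$ and the explicit value of $e$ from Proposition~\ref{prop-halo-mult}; the formula $e'=\#\{n:0<\goths_n^{(\e)}<C\}+[s_\e=0]+[k_0=\{s_\e+a\}]$ with $C=(k-1)(p-1)p^{m-2}$, from Propositions~\ref{prop-halo-slopes} and~\ref{prop-crystabelline-slopes}; and a finite threshold check, split by whether $s_\e+a<p-1$ and by the position of $k_0$ relative to $\eta_1,\eta_2$. The check you defer is the substance of the proof. The paper does it by computing $e \bmod 2p$ (using $\frac{p^{m-1}-1}{p-1}\equiv 1\pmod p$) and reading off $\goths_e^{(\e)}$ and $\goths_{e+1}^{(\e)}$ relative to $C$ from Definition~\ref{def-halo-slopes}, which suffices by strict monotonicity.

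One correction to your description of the boundary: when $k_0=\{s_\e-1\}$, so that $k_0\neq\{s_\e+a\}$, the Newton polygon still has a slope exactly $k_2-k_1$, at index $e+1$. That slope is simply discarded, so the identity to verify is $\#\{n:\goths_n^{(\e)}<C\}=e-[k_0=\{s_\e+a\}]$, not a matching of every slope equal to $C$ with that indicator.
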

	
	\begin{proof}
		Write $e:=e(\Rkis/\vp)$. Since the crystabelline slopes of $\calX$ lie in $[0,k_2-k_1]$ and each slope of $\NP(G^{(\e)}_\sigma(w_*,-))$ has multiplicity one (Proposition~\ref{prop-halo-slopes}), our strategy is to compute the $e$th and $(e+1)$th slopes of $\NP(G^{(\e)}_\sigma(w_*,-))$ and compare them with $k_2-k_1$. Combining this comparison with a separate analysis of the
		boundary slopes $0$ and $k_2-k_1$, it follows from
		Proposition~\ref{prop-crystabelline-slopes} that the first $e$ slopes of $\NP(G^{(\e)}_\sigma(w_*,-))$ are exactly all of the slopes occurring in $\calX$. This implies that $e=e'$.
		
		We first fix some notation. Recall that $\e=\bar\theta_1|_{\Delta}\cdot \omega^{-k_2}\times\bar\theta_2|_{\Delta}\cdot \omega^{-k_1-1}=\omega^{-s_\e+b}\times\omega^{s_\e+a+b}$, and that $w_*$ (see Notation~\ref{notation-1}) has valuation $v_p(w_*)=1/(p^{m-2}(p-1))$. Write $\bar\theta_i|_{\Delta}=\omega^{n_i}$ for $i=1,2$. Put
		\begin{itemize}
			\item 
			$e=e(\Rkis/\vp)$, and $e'=e'(\Spf(\Rkis)^\rig)$.
			\item $k:=k_2-k_1+1$, $k_0=\{k-2\}$, and $k-2=k_0+k_\bullet(p-1)$.
			\item $C:=C(m,k)=(k-1)(p-1)p^{m-2}$, and $N=N(m,k)=k_\bullet+(k-1)\frac{p^{m-1}-1}{p-1}+1$.
			\item $\eta_1=\min\{s_\e,\{s_\e+a\}\}$, and $\eta_2=\max\{s_\e,\{s_\e+a\}\}$. 
		\end{itemize}
		Note that $C\cdot v_p(w_*)=k-1=k_2-k_1$.
		Since $\bar\theta_1|_\Delta\cdot\omega^{-k_2}=\omega^{-s_\e+b}$,
		Proposition~\ref{prop-halo-mult} gives
		\begin{equation}
			\label{eq-pf-1}
			\begin{aligned}
				e&=\bigl[\bar{\sigma}(\mathbf{w},\tau)^{\mathrm{ss}}:\sigma\bigr]=2(k-1)\frac{p^{m-1}-1}{p-1}+\left\lfloor\frac{k-2-s_\e}{p-1}\right\rfloor+\left\lfloor\frac{k-2-\{s_\e+a\}}{p-1}\right\rfloor+2
				\\&=2N+\left\lfloor\frac{k_0-s_\e}{p-1}\right\rfloor+\left\lfloor\frac{k_0-\{s_\e+a\}}{p-1}\right\rfloor=2N+
				\begin{cases}
					-2& 0\leq k_0<\eta_1,\\
					-1& \eta_1\leq k_0<\eta_2,\\
					0&k_0\geq\eta_2.
				\end{cases}
			\end{aligned}
		\end{equation}
		Here the last equality follows from the formulas
		\[
		\left\lfloor\frac{k_0-s_\e}{p-1}\right\rfloor
		=
		\begin{cases}
			-1,& k_0<s_\e,\\
			0,& k_0\geq s_\e,
		\end{cases}\quad\textrm{ and }\quad
		\left\lfloor\frac{k_0-\{s_\e+a\}}{p-1}\right\rfloor
		=
		\begin{cases}
			-1,& k_0<\{s_\e+a\},\\
			0,& k_0\geq \{s_\e+a\}.
		\end{cases}
		\]
		Moreover, since $2N\equiv 2(k+k_\bullet)\equiv2(k_0+2)\pmod{2p}$, we obtain
		\[
		e\equiv\begin{cases}
			2k_0+2,& 0\leq k_0<\eta_1,\\
			2k_0+3,& \eta_1\leq k_0<\eta_2,\\
			2k_0+4,& k_0\geq\eta_2.
		\end{cases}\pmod{2p}.
		\]
		Using Definition~\ref{def-halo-slopes} and Proposition~\ref{prop-halo-slopes}, we compute the differences $\goths_e^{(\e)}-C$ and $\goths_{e+1}^{(\e)}-C$ in each case. The results are summarized in the following table. 
		\begin{table}[H]
			\centering
			\renewcommand{\arraystretch}{2.5}
			\setlength{\tabcolsep}{5pt}
			\resizebox{\textwidth}{!}{
				\begin{tabular}{c|c|cc|c|cc|c}
					\hline
					\multicolumn{2}{c|}{Case}
					& $\lambda_e^{(\e)}-C$
					& $\goths_e^{(\e)}-C$
					& $e-2s_\e\pmod{2p}$
					& $\lambda_{e+1}^{(\e)}-C$
					& $\goths_{e+1}^{(\e)}-C$
					& $e+1-2s_\e\pmod{2p}$
					\\
					\hline
					\multirow{6}{*}{$s_\e+a<p-1$}
					& $k_0<\eta_1-1$
					& $\eta_2-k_0-(p-1)$
					& $\eta_2-k_0-(p-1)<0$
					& $2(k_0-s_\e)+2$
					& $\eta_1-k_0-1$
					& $\eta_1-k_0-1>0$
					& $2(k_0-s_\e)+3$
					\\
					&$\substack{
						k_0=\eta_1-1 \quad \text{and}\quad \eta_1\geq1,\\[4pt]
						\text{or } k_0=p-2 \quad \text{and}\quad \eta_1=0
					}$
					& $\eta_2-\eta_1-p+2$
					& $\eta_2-\eta_1-p+2<0$
					& $0$
					& $0$
					& $0$
					& $1$
					\\
					& $\eta_1\leq k_0<\eta_2$
					& $\eta_1-k_0$
					& $\eta_1-k_0-1<0$
					& $2(k_0-s_\e)+3$
					& $\eta_2-k_0-1$
					& $\eta_2-k_0>0$
					& $2(k_0-s_\e)+4$
					\\
					& $k_0=\eta_2=\{s_\e+a\}$
					& $0$
					& $0$
					& $2a+4$
					& $p-2+\eta_1-\eta_2$
					& $p-2+\eta_1-\eta_2>0$
					& $2a+5$
					\\
					& $\eta_2<k_0<\eta_1+p-2$
					& $\eta_2-k_0$
					& $\eta_2-k_0<0$
					& $2(k_0-s_\e)+4$
					& $p-2+\eta_1-k_0$
					& $p-2+\eta_1-k_0>0$
					& $2(k_0-s_\e)+5$
					\\
					\hline
					\multirow{5}{*}{$s_\e+a\geq p-1$}
					& $k_0<\eta_1$
					& $\eta_2-k_0-(p-1)$
					& $\eta_2-k_0-p<0$
					& $2(k_0-s_\e)+2$
					& $\eta_1-k_0-1$
					& $\eta_1-k_0>0$
					& $2(k_0-s_\e)+3$
					\\
					& $k_0=\eta_1=\{s_\e+a\}$
					& $0$
					& $0$
					& $2a+5$
					& $\eta_2-\eta_1-1$
					& $\eta_2-\eta_1-1>0$
					& $2a+6$
					\\
					& $\eta_1<k_0<\eta_2-1$
					& $\eta_1-k_0$
					& $\eta_1-k_0<0$
					& $2(k_0-s_\e)+3$
					& $\eta_2-k_0-1$
					& $\eta_2-k_0-1>0$
					& $2(k_0-s_\e)+4$
					\\
					& $k_0=\eta_2-1=s_\e-1$
					& $\eta_1-\eta_2+1$
					& $\eta_1-\eta_2+1<0$
					& $1$
					& $0$
					& $0$
					& $2$
					\\
					& $\eta_2\leq k_0$
					& $\eta_2-k_0$
					& $\eta_2-k_0-1<0$
					& $2(k_0-s_\e)+4$
					& $p-2+\eta_1-k_0$
					& $p-1+\eta_1-k_0>0$
					& $2(k_0-s_\e)+5$
					\\
					\hline
				\end{tabular}
			}
			\caption{Hodge and Newton ratios relative to $C$ at indices $e$ and $e+1$.}
			\label{tab-halo-slopes-at-e}
		\end{table}
		Combining Propositions~\ref{prop-halo-slopes} and~\ref{prop-crystabelline-slopes} with the preceding table, we obtain the following summary of the first, $e$th, and $(e+1)$th slopes of $\NP(G^{(\e)}_\sigma(w_*,-))$ and the range of crystabelline slopes of $\calX$. In the last column, a square bracket at $0$ or $k-1$ indicates that the corresponding endpoint occurs as a crystabelline slope of $\calX$.
		
		\begin{center}
			\renewcommand{\arraystretch}{1.5}
			\begin{tabular}{cc|ccc|c}
				\hline
				\multicolumn{2}{c|}{\multirow{2}{*}{Conditions}}
				& \multicolumn{3}{c|}{Slopes of $\NP(G^{(\e)}_\sigma(w_*,-))$}
				& \multirow{2}{*}{Counting interval} \\
				\cline{3-5}
				& & $1$st & $e$th & $(e+1)$th & \\
				\hline
				$s_\e\neq0$ & $k_0\neq\{s_\e+a\}$
				& $>0$ & $<k-1$ & $\geq k-1$ & $(0,k-1)$ \\
				$s_\e=0$ & $k_0\neq\{s_\e+a\}$
				& $0$ & $<k-1$ & $\geq k-1$ & $[0,k-1)$ \\
				$s_\e\neq0$ & $k_0=\{s_\e+a\}$
				& $>0$ & $k-1$ & $>k-1$ & $(0,k-1]$ \\
				$s_\e=0$ & $k_0=\{s_\e+a\}$
				& $0$ & $k-1$ & $>k-1$ & $[0,k-1]$ \\
				\hline
			\end{tabular}
		\end{center}
		In each case, by Proposition~\ref{prop-crystabelline-slopes}, we observe that the first $e$ slopes of $\NP(G^{(\e)}(w_*,-))$ are exactly all of the slopes occurring in $\calX$, which implies that $e=e'$.
	\end{proof}
	
	\begin{remark}
		\label{rmk-2}
		The equality in Proposition~\ref{prop-BM-mult=halo-mult} relies
		on two features of our setting: $\bar\rho$ has a unique Serre
		weight, and the principal series type is wildly ramified.
		The latter condition implies that $w_*$ lies in the \emph{halo region} of
		\cite{Lghost2}, where every slope of
		$\NP(G_\sigma^{(\e)}(w_*,-))$ has multiplicity one.
		
		When $\bar\rho$ is split, it has two Serre weights, and the
		equality $e=e'$ no longer holds. As we will see in
		Corollary~\ref{cor-split-compare-mult}, $e$ is approximately
		twice $e'$.
		If $\tau$ is a tamely ramified principal series type, the corresponding parameter $w'_*$ lies in the central region of the weight disc. In this case, the ``near-Steinberg'' phenomenon gives rise to long segments of constant slope in $\NP(G^{(\e)}_\sigma(w'_*,-))$, and usually we have $e'<e$; see \cite[Proposition~2.17(5)]{Lghost2}. The relationship between $e$ and $e'$ in this case is more subtle, and we leave its investigation to future work.
	\end{remark}
	
	For split $\bar\rho$, Proposition~\ref{prop-crystabelline-slopes}
	allows us to compute the naive lower bound from the two
	corresponding non-split cases. To compare this bound with the
	Breuil--M\'ezard multiplicity, we first compare the multiplicities $e(\Rkis/\vp)$ for the two non-split $\bar\rho$'s with Serre weight sets $\{\sigma_{a,b}\}$ and $\{\sigma_{p-3-a,a+b+1}\}$, respectively.
	\begin{lemma}
		\label{lem-multiplicity-difference}
		Let $e_1$ and $e_2$ be the multiplicities $e(\Rkis/\vp)$ for the
		two non-split residual representations with Serre weight sets
		$\{\sigma_{a,b}\}$ and $\{\sigma_{p-3-a,a+b+1}\}$, respectively.
		Recall that $k_0=\{k_2-k_1-1\}$ and
		$s_\e\in\{0,\dots,p-2\}$ is defined in Notation~\ref{notation-1}.
		Set $s'_\e:=\{s_\e+a+1\}$ and $a':=p-3-a$.
		Then $e_1-e_2$ is given by the following table.
		
		\begin{center}
			\renewcommand{\arraystretch}{1.2}
			\setlength{\tabcolsep}{2pt}
			\begin{tabular}{c|l}
				\hline
				$e_1-e_2$ & Conditions \\
				\hline
				$2$
				& $s_\e=0,\quad k_0=\{s_\e+a\}$
				\\
				\hline
				\multirow{2}{*}{$1$}
				& $s_\e=0,\quad	k_0\notin\bigl\{\{s'_\e+a'\},\{s_\e+a\}\bigr\}$
				\\
				& $s_\e\neq0,\quad s'_\e\neq 0,\quad k_0=\{s_\e+a\}$
				\\
				\hline
				\multirow{3}{*}{$0$}
				& $s_\e=0,\quad k_0=\{s'_\e+a'\}$\\
				&$s'_\e=0,\quad k_0=\{s_\e+a\}$
				\\
				& $s_\e\neq0,\quad s'_\e\neq0,\quad
				k_0\notin\bigl\{\{s'_\e+a'\},\{s_\e+a\}\bigr\}$
				\\
				\hline
				\multirow{2}{*}{$-1$}
				& $s'_\e=0,\quad k_0\notin\bigl\{\{s'_\e+a'\},\{s_\e+a\}\bigr\}$
				\\
				& $s_\e\neq0,\quad s'_\e\neq 0,\quad k_0=\{s'_\e+a'\}$
				\\
				\hline
				$-2$
				& $s'_\e=0,\quad k_0=\{s'_\e+a'\}$
				\\
				\hline
			\end{tabular}
		\end{center}
		
	\end{lemma}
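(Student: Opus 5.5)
We compute $e_1$ and $e_2$ separately from Lemma~\ref{lem-type-Serre-wt}(3) and Proposition~\ref{prop-halo-mult}, and then subtract. Throughout, $(\w,\t,\psi)$ is fixed, so $k$, $m$ and $k_0$ are common to both rings. For the first non-split representation the Serre weight is $\sigma=\sigma_{a,b}$ and the parameter is $s=s_\e$. Indeed, \eqref{eq-1} gives $n_1-k_2\equiv b-s_\e$, hence $\{k_2-(n_1-b)\}=s_\e$.

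For the second, the Serre weight is $\sigma^{\co}=\sigma_{a',b'}$ with $a'=p-3-a$ and $b'=a+b+1$. Applying Proposition~\ref{prop-halo-mult} to this weight, the parameter is $s'=\{k_2-(n_1-b')\}=\{s_\e+a+1\}=s'_\e$, in agreement with Notation~\ref{notation-1}(2). Equivalently, one can read $e_2$ off the second formula of Proposition~\ref{prop-halo-mult}, namely the multiplicity of $\sigma^{\co}$ in $\bar\sigma(\w,\t)^{\rm ss}$. That formula involves the floors at $\{s_\e-1\}$ and $\{s_\e+a+1\}$. These agree with the pair $s'_\e$ and $\{s'_\e+a'\}=\{s_\e-1\}$, so the two descriptions match.

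The leading terms $2(k-1)\frac{p^{m-1}-1}{p-1}+2$ coincide and cancel. Writing $k-2=k_0+k_\bullet(p-1)$ as in the proof of Proposition~\ref{prop-BM-mult=halo-mult}, each floor $\lfloor (k-2-t)/(p-1)\rfloor$ with $t\in\{0,\dots,p-2\}$ equals $k_\bullet-[k_0<t]$, where $[\cdot]$ denotes the indicator. The $k_\bullet$'s cancel and we obtain
\[
e_1-e_2=[k_0<s'_\e]+[k_0<\{s_\e-1\}]-[k_0<s_\e]-[k_0<\{s_\e+a\}].
\]
It remains to evaluate this expression according to the positions of $k_0$, $s_\e$ and $s'_\e$.

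To organize the case analysis, pair the terms as $\bigl([k_0<\{s_\e-1\}]-[k_0<s_\e]\bigr)+\bigl([k_0<\{s_\e+a+1\}]-[k_0<\{s_\e+a\}]\bigr)$.

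For the first bracket: if $s_\e\neq0$, then $\{s_\e-1\}=s_\e-1$ and the bracket equals $-[k_0=s_\e-1]=-[k_0=\{s'_\e+a'\}]$. If $s_\e=0$, then $\{s_\e-1\}=p-2$ and the bracket equals $[k_0<p-2]=1-[k_0=\{s'_\e+a'\}]$.

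For the second bracket: if $s'_\e\neq0$, i.e. $\{s_\e+a\}\neq p-2$, it equals $[k_0=\{s_\e+a\}]$. If $s'_\e=0$, it equals $-[k_0<p-2]=-1+[k_0=\{s_\e+a\}]$.

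Hence
\[
e_1-e_2=[s_\e=0]-[s'_\e=0]+[k_0=\{s_\e+a\}]-[k_0=\{s'_\e+a'\}].
\]
Now $s_\e$ and $s'_\e$ cannot both be $0$, since $a+1\not\equiv0\pmod{p-1}$ by very genericity. Likewise $\{s_\e+a\}\neq\{s_\e-1\}$, since that would force $a+1\equiv0$. Running through the combinations of these conditions reproduces each row of the table. For example, $s_\e=0$ and $k_0=\{s_\e+a\}$ gives $2$, while $s_\e=0$ and $k_0=\{s'_\e+a'\}$ gives $0$.

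The only delicate point is the bookkeeping of representatives modulo $p-1$. This occurs in the wrap-around cases $s_\e=0$ and $s'_\e=0$, where $\{s-1\}$ jumps to $p-2$. It also requires checking that the parameter $s'$ attached to $\sigma^{\co}$ really is $s'_\e$, including the twist by $\omega^{b'}$.
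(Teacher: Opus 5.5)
Your proof is correct and is essentially the paper's argument. After cancelling the common terms, you pair the floors at $\{s_\e-1\}$ with $s_\e$ and at $\{s_\e+a+1\}$ with $\{s_\e+a\}$; this is exactly the identity $\lfloor (k_0-r)/(p-1)\rfloor-\lfloor (k_0-\{r+1\})/(p-1)\rfloor=\mathbf 1_{k_0=r}-\mathbf 1_{r=p-2}$, which the paper applies with $r=\{s_\e+a\}$ and $r=\{s_\e-1\}$, and your closed form $\mathbf 1_{s_\e=0}-\mathbf 1_{s'_\e=0}+\mathbf 1_{k_0=\{s_\e+a\}}-\mathbf 1_{k_0=\{s'_\e+a'\}}$ together with the two exclusions from $a\neq p-2$ yields every row of the table.
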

	
	\begin{proof}
		By Proposition~\ref{prop-halo-mult},
		\[
		\begin{aligned}
			e_1-e_2
			&=\left\lfloor\frac{k_0-s_\e}{p-1}\right\rfloor
			+\left\lfloor\frac{k_0-\{s_\e+a\}}{p-1}\right\rfloor-\left\lfloor\frac{k_0-\{s_\e+a+1\}}{p-1}\right\rfloor
			-\left\lfloor\frac{k_0-\{s_\e-1\}}{p-1}\right\rfloor.
		\end{aligned}
		\]
		Note that $s'_\e=\{s_\e+a+1\}$ and
		$\{s'_\e+a'\}=\{s_\e-1\}$. For $0\leq r,k_0\leq p-2$,
		\[
		\left\lfloor\frac{k_0-r}{p-1}\right\rfloor
		-\left\lfloor\frac{k_0-\{r+1\}}{p-1}\right\rfloor
		=\mathbf1_{k_0=r}-\mathbf1_{r=p-2}.
		\]
		Applying this identity with $r=\{s_\e+a\}$ and
		$r=\{s_\e-1\}$ gives the required differences.
	\end{proof}
	Recall that, by our genericity assumption, the conditions
	$s_\e=0$ and $s'_\e=0$ are mutually exclusive, as are the conditions
	$k_0=\{s_\e+a\}$ and $k_0=\{s'_\e+a'\}$.
	\begin{corollary}
		\label{cor-split-compare-mult}Assume that $p\geq11$.
		Retain the notation of Lemma~\ref{lem-multiplicity-difference},
		and suppose that $\bar\rho$ is split. Recall that $k=k_2-k_1+1$ and $k_0=\{k_2-k_1-1\}$. Write
		$
		\calX:=\Spf(\Rkis)^{\rig}$, $e:=e(\Rkis/\vp)$, and $e':=e'(\calX)$.
		Then the following statements hold and cover all possible cases.
		\begin{enumerate}
			\item If $s_\e\neq0$, $s'_\e\neq0$, and
			$
			k_0\notin\bigl\{\{s_\e+a\},\{s'_\e+a'\}\bigr\}$,
			then neither $0$ nor $k-1$ occurs as a crystabelline
			slope of $\calX$, and
			$				
			e=2e'.
			$
			\item If exactly one of $s_\e,s'_\e$ is zero  and
			$
			k_0\notin\bigl\{\{s_\e+a\},\{s'_\e+a'\}\bigr\},
			$
			then $0$ is the only boundary slope occurring in
			$\calX$, and $e+1=2e'$.
			
			\item If $s_\e\neq0$, $s'_\e\neq0$, and
			$
			k_0\in\bigl\{\{s_\e+a\},\{s'_\e+a'\}\bigr\}$,
			then $k-1$ is the only boundary slope occurring in
			$\calX$, and $e+1=2e'$.
			
			\item If exactly one of $s_\e,s'_\e$ is zero and
			$
			k_0\in\bigl\{\{s_\e+a\},\{s'_\e+a'\}\bigr\},
			$
			then both $0$ and $k-1$ occur as crystabelline
			slopes of $\calX$, and
			$
			e+2=2e'$.
		\end{enumerate}
	\end{corollary}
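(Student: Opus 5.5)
We work with the two non-split residual representations $\bar\rho_1,\bar\rho_2$ (Serre weights $\sigma$ and $\sigma^{\co}$) that have the same type and the same characters, and we compare everything to them. Lemma~\ref{lem-type-Serre-wt}(3) gives $e=e_1+e_2$, where $e_i$ is the multiplicity of the deformation ring of $\bar\rho_i$. Proposition~\ref{prop-BM-mult=halo-mult} gives $e_i=e'(\calX_i)$, and both numbers are computed by Proposition~\ref{prop-crystabelline-slopes}.

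By Proposition~\ref{prop-crystabelline-slopes}(1), the interior slopes of $\calX$, $\calX_1$ and $\calX_2$ are the same set. For $\calX_1$ this set is the slopes of $\NP(G_\sigma^{(\e)}(w_*,-))$ lying in $(0,k-1)$. For $\calX_2$ it is the slopes of $\NP(G_{\sigma^{\co}}^{(\e)}(w_*,-))$ lying in $(0,k-1)$. These agree because the two Newton polygons coincide away from the possible slope $0$, by the remark following Proposition~\ref{prop-halo-slopes}. Call the common number of interior slopes $I$. Then
\[
e'(\calX)=I+\beta_0+\beta_{k-1},\qquad e_i=I+\beta_0^{(i)}+\beta_{k-1}^{(i)},
\]
where $\beta_\ast,\beta_\ast^{(i)}\in\{0,1\}$ record whether the boundary slope $\ast$ occurs in $\calX$, respectively in $\calX_i$.

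Next we read off the boundary indicators. Proposition~\ref{prop-crystabelline-slopes}(2) applied to $\bar\rho_1$ gives $\beta_0^{(1)}=\mathbf 1_{s_\e=0}$ and $\beta_{k-1}^{(1)}=\mathbf 1_{k_0=\{s_\e+a\}}$. Applied to $\bar\rho_2$, with $(s_\e,a)$ replaced by $(s'_\e,a')$ as in Notation~\ref{notation-1}(2), it gives $\beta_0^{(2)}=\mathbf 1_{s'_\e=0}$ and $\beta_{k-1}^{(2)}=\mathbf 1_{k_0=\{s'_\e+a'\}}$. Here one checks that the relevant $s$-parameter for $\sigma^{\co}$ is indeed $s'_\e$. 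Proposition~\ref{prop-crystabelline-slopes}(3) says $\beta_\ast=\max(\beta_\ast^{(1)},\beta_\ast^{(2)})$. Under the genericity assumption $2\le a\le p-5$, the conditions $s_\e=0$ and $s'_\e=0$ cannot both hold, and neither can $k_0=\{s_\e+a\}$ and $k_0=\{s'_\e+a'\}=\{s_\e-1\}$. So each maximum is actually a sum: $\beta_\ast=\beta_\ast^{(1)}+\beta_\ast^{(2)}$.

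Now we compute:
\[
2e'-e=2I+2\beta_0+2\beta_{k-1}-(2I+\beta_0+\beta_{k-1})=\beta_0+\beta_{k-1}.
\]
The value is $0$ in case (1), $1$ in cases (2) and (3), and $2$ in case (4). The same bookkeeping tells us which boundary slopes occur in each case. These four cases exhaust all possibilities because the two pairs of conditions are mutually exclusive. As a consistency check, this agrees with the table in Lemma~\ref{lem-multiplicity-difference}, since $e_1-e_2=\beta_0^{(1)}+\beta_{k-1}^{(1)}-\beta_0^{(2)}-\beta_{k-1}^{(2)}$.

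The main obstacle is the claim that the interior slope counts of $\calX_1$ and $\calX_2$ are equal. This rests on Proposition~\ref{prop-crystabelline-slopes}(1) for each non-split representation, together with the coincidence of the two ghost Newton polygons away from slope $0$. We also need the interior slopes of $\calX$ itself, for split $\bar\rho$, to be the same set. Proposition~\ref{prop-crystabelline-slopes}(1) asserts this for any $\bar\rho$, as long as its $\NP(G_\sigma^{(\e)}(w_*,-))$ is read with either Serre weight. I would verify this by checking that the relevant $\e$ is the one in Notation~\ref{notation-1} in both cases.
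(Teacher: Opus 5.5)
Your proof is correct. It rests on the same inputs as the paper:
\begin{itemize}
\item $e=e_1+e_2$ from Lemma~\ref{lem-type-Serre-wt}(3);
\item $e_i=e'_i$ from Proposition~\ref{prop-BM-mult=halo-mult}, applied to both $\bar\rho_1$ and $\bar\rho_2$ (the latter is again very generic, since $a'=p-3-a\in[2,p-5]$);
\item the description of interior and boundary slopes in Proposition~\ref{prop-crystabelline-slopes};
\item the genericity exclusion $a\neq p-2$.
\end{itemize}

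The difference lies only in the bookkeeping. The paper records the slope comparison as ``$e'=\max\{e'_1,e'_2\}$, except in the two mixed cases where $e'=e'_1+1=e'_2+1$''. It then takes the value of $e_1-e_2$ from the explicit floor-function computation of Lemma~\ref{lem-multiplicity-difference}, which comes from the multiplicity formula of Proposition~\ref{prop-halo-mult}.

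You instead write $e'$ and each $e_i$ as a common interior count $I$ plus boundary indicators. Then $2e'-e=\beta_0+\beta_{k-1}$ follows in one line, and Lemma~\ref{lem-multiplicity-difference} is not needed as an input. In fact your decomposition recovers that table via $e_1-e_2=\beta_0^{(1)}+\beta_{k-1}^{(1)}-\beta_0^{(2)}-\beta_{k-1}^{(2)}$. The paper's route does, as a side effect, match the slope count against the representation-theoretic formula.

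The step you flag as the main obstacle is the equality of the interior slope sets of $\calX$, $\calX_1$ and $\calX_2$. The paper uses it implicitly in its ``$\max$'' statement. It is justified exactly as you say, by Proposition~\ref{prop-crystabelline-slopes}(1) together with the coincidence of $\NP(G_\sigma^{(\e)}(w_*,-))$ and $\NP(G_{\sigma^{\co}}^{(\e)}(w_*,-))$ away from slope $0$.
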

	
	\begin{proof}
		Let $e_1$, $e_2$ be the Breuil--M\'ezard multiplicities defined in Lemma~\ref{lem-multiplicity-difference}, and denote by $e'_1$, $e'_2$ the corresponding naive lower bounds. From Proposition~\ref{prop-BM-mult=halo-mult} and Lemma~\ref{lem-type-Serre-wt}(3), we obtain $e_i=e'_i$ for $i=1,2$ and $e=e_1+e_2$. By Proposition~\ref{prop-crystabelline-slopes}, we have $e'=\max\{e'_1,e_2'\}$, except for $s_\e=0$ and $k_0=\{s'_{\e}+a'\}$, or $s'_\e=0$ and $k_0=\{s_{\e}+a\}$, in which cases $e'=e_1'+1=e_2'+1$.
		Now (1)(2)(3)(4) follow from Proposition~\ref{prop-crystabelline-slopes} and  Lemma~\ref{lem-multiplicity-difference}. 
	\end{proof}
	\section{Breuil--M\'ezard multiplicity of components}
	\label{section-computing-BM-mult-of-components}
	In this section, we prove the main results of this paper.
	The non-split case (Theorem~\ref{thm-BM=1-non-split}) follows directly
	from Proposition~\ref{prop-BM-mult=halo-mult}.
	For the split case (Theorem~\ref{thm-main-thm-split}), we first use the
	results on \emph{versal} deformation rings in
	\cite{paskunas-2adic,HT-BMconj} to compare irreducible components
	in the split and non-split cases; see
	\S~\ref{subsection-analysis-of-comp-1}
	and \ref{subsection-analysis-of-comp-2}.
	This yields upper bounds on the number of irreducible components
	and on their individual Breuil--M\'ezard multiplicities; see Propositions~\ref{prop-key-irreducible},~\ref{prop-key-reducible} and Corollaries~\ref{cor-component-injection-1}, \ref{cor-component-injection-2}. In \S~\ref{finish-the-proof},
	we determine the number of components
	by matching the resulting upper bound with the naive lower bound $e'(\calX)$ of Corollary~\ref{cor-split-compare-mult}. Similarly, we then show that the \emph{total} Breuil--M\'ezard multiplicity agrees with the sum of the upper bounds for the individual
	multiplicities, so each bound is attained.
	
	Throughout this section, we assume that $p\geq11$.
	\subsection{Breuil--M\'ezard multiplicity one or two}
	We first consider a non-split $\bar\rho$.  The relation $e(\Rkis/\vp)=e'(\calX)$ (Proposition~\ref{prop-BM-mult=halo-mult}) implies the following result, namely, each irreducible component of $\Rkis$ has Breuil--M\'ezard multiplicity one.
	\begin{theorem}
		\label{thm-BM=1-non-split}
		Assume that $p\geq11$.
		Let $\bar\rho$ be a reducible, non-split, and very generic local representation. Let $\t$ be a wildly ramified principal series type. Write $\Min(\Rkis)=\{\p_1,\dots,\p_n\}$, and $R(\p_i):=\Rkis/\p_i$.
		Then $$n=e(\Rkis/\vp)\quad \textrm{and}\quad e(R(\p_i)/\vp)=1 \quad\textrm{for}\quad i\in\{1,\dots,n\}.$$ Moreover, $R(\p_i)$ is a formal power series ring over $\calO$.
	\end{theorem}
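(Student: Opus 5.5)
The proof is a counting argument built on the chain of inequalities \eqref{eq-01}. Write $R:=\Rkis$ and $e:=e(R/\vp)$. By Proposition~\ref{prop-crystabelline-slopes}, only finitely many crystabelline slopes occur in $\calX$. Corollary~\ref{cor-naive-lower-bound} therefore applies and gives
\[
e=\sum_{i=1}^n e\bigl(R(\p_i)/\vp\bigr)\geq n\geq e'(\calX),
\]
where the equality is Lemma~\ref{lem-inequality-of-mult}. For the middle inequality we use that $n=\#\Min(R)$ equals the number of connected components of $\calX$ (\S~\ref{subsection-components-and-BM-mult}). Since $\bar\rho$ is non-split, Proposition~\ref{prop-BM-mult=halo-mult} gives $e=e'(\calX)$, so every inequality in the chain is an equality. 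In particular $n=e$.

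For the individual multiplicities, recall that each $R(\p_i)$ is $\calO$-flat of the same dimension as $R$, so $R(\p_i)/\vp$ is nonzero of dimension $\dim R-1$. Hence $e(R(\p_i)/\vp)\geq1$. Combined with $\sum_i e(R(\p_i)/\vp)=e=n$, this forces $e(R(\p_i)/\vp)=1$ for every $i$.

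It remains to show that $R(\p_i)$ is a formal power series ring over $\calO$. Here we invoke Lemma~\ref{lem comm alg2}, exactly as indicated in \S~\ref{subsection-components-and-BM-mult}. The ring $R(\p_i)$ is a complete Noetherian local domain, flat over $\calO$, with residue field $\FF$. Its special fiber has Hilbert--Samuel multiplicity one, so the lemma gives $R(\p_i)\simeq\calO[\![x_1,\dots,x_d]\!]$.

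The one point that needs care is the step $e(R(\p_i)/\vp)\geq1$, which uses that $\vp$ is a nonzerodivisor on $R(\p_i)$ and that $R(\p_i)/\vp\neq0$; both follow from $\calO$-flatness and locality. All of the substantive input, namely the matching of slopes with $e$, has already been supplied by Proposition~\ref{prop-BM-mult=halo-mult}.
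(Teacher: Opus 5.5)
Your proposal is correct and follows the paper's proof essentially verbatim. The chain $e\geq n\geq e'(\calX)=e$ from Corollary~\ref{cor-naive-lower-bound} and Proposition~\ref{prop-BM-mult=halo-mult} forces $n=e$. Additivity (Lemma~\ref{lem-inequality-of-mult}), with each summand at least one, then gives multiplicity one for every component, and Lemma~\ref{lem comm alg2} finishes — your observation that $R(\p_i)$ is an $\calO$-flat local domain, hence equidimensional with reduced generic fiber, is exactly what that lemma requires.
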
 
	\begin{proof}
		Write $\calX=\Spf(\Rkis)^\rig$.
		By Corollary~\ref{cor-naive-lower-bound}, Proposition~\ref{prop-crystabelline-slopes}, and Proposition~\ref{prop-BM-mult=halo-mult}, we have
		\[
		e(R_{\bar\rho}^{\square,\psi}(\w,\t)/\vp)\geq n\geq e'(\calX)=e(R_{\bar\rho}^{\square,\psi}(\w,\t)/\vp).
		\]
		Hence $n=e(R_{\bar\rho}^{\square,\psi}(\w,\t)/\vp)$. Then Lemma~\ref{lem-inequality-of-mult} implies that $e(R(\p_i)/\vp)=1$ for any $i$. The last assertion follows from Lemma~\ref{lem comm alg2}.
	\end{proof}
	
	Recall that the refined
	decomposition of $\Rkis$ described in Corollary~\ref{rmk-1} gives the following partition of the set of minimal primes $\Min(\Rkis)$.
	\begin{definition}
		\label{def-decomp-of-Min(R)}
		Write $R=\Rkis$.
		We define \begin{equation}
			\label{eq-decomp-of-Min(R)}
			\gotha_\goths
			:=
			\bigcap_{\substack{x\in\calX^{\cl}\\ \goths(x)=\goths}}
			(\gothm_x
			\cap R)\quad\textrm{and}\quad\Sigma_{\goths}(\bar\rho):=\{\p\in\Min(R):\gotha_\goths\subseteq\p\}.
		\end{equation} Then we have the decomposition $\Min(R_{\bar\rho}^{\square,\psi}(\w,\t))=\bigsqcup_{\goths\in\{\goths(x):x\in\calX^\cl\}}\Sigma_\goths(\bar\rho)$. For simplicity, we also use $\Sigma_{(0,k_2-k_1)}(\bar\rho)$ to denote the union $\cup_{\goths\in(0,k_2-k_1)}\Sigma_{\goths}(\bar\rho)$.
	\end{definition}
	In the very special case where $\bar\rho$ is non-split, each $\gotha_\goths$ is a minimal prime and each irreducible component $\Spec \Rkis/\gotha_\goths$ is uniquely determined by the slope $\goths$.
	For split $\bar\rho$, our main result is the following, in which the Breuil--M\'ezard multiplicity of an irreducible component of $\Rkis$ can be one or two.
	\begin{theorem}
		\label{thm-main-thm-split}
		Assume that $p\geq11$.
		Keep the notation of Definition~\ref{def-decomp-of-Min(R)}.
		Let $\bar\rho$ be a reducible, split, and very generic local representation. Let $\t$ be a wildly ramified principal series type. Write $R=\Rkis$, and let $e'(\Spf(R)^\rig)$ denote the number of distinct crystabelline slopes of $\Spf(R)^\rig$. Then $\Spec R$ has exactly $e'(\Spf(R)^\rig)$ irreducible components. In particular, each $\gotha_\goths$ in the decomposition \eqref{eq-decomp-of-Min(R)} is minimal.
		
		Moreover, we have the following statements about Breuil--M\'ezard multiplicities. For $\gothp\in\Min(R)$, we write $R(\p):=R/\p$.
		\begin{enumerate}
			\item If $\p\in\Sigma_\goths(\bar\rho)$ for some $\goths\in(0,k_2-k_1)$, then $e(R(\p)/\vp)=2$.
			\item If $\p\in\Sigma_\goths(\bar\rho)$ for some $\goths\in\{0,k_2-k_1\}$, then $e(R(\p)/\vp)=1$ and $R(\p)$ is a formal power series ring over $\calO$.
		\end{enumerate}
	\end{theorem}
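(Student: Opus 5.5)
The argument is a counting argument. Via the versal rings of \S~\ref{subsection-versal-deformation-ring} (Remark~\ref{rmk-0}), we work with $A_0=R^{\ver}(\w,\t)$ for the split $\bar\rho_0=\bar\chi_1\oplus\bar\chi_2$, and with $A_i=R_i(\w,\t)$ for the two non-split extensions $\bar\rho_1,\bar\rho_2$, whose Serre weights are $\sigma$ and $\sigma^{\co}$ respectively. Passing between framed and versal rings gives a bijection on minimal primes that preserves Breuil--M\'ezard multiplicities (Lemma~\ref{lem-1}), so it suffices to prove everything for $A_0$.

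The structural inputs come from Pa\v{s}k\=unas' description $A_0\simeq A_i[[x,y]]/(xy-c)$ in \eqref{eq-03}, where $c$ generates the reducibility ideal. From it I would extract the following.

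\begin{enumerate}
\item A component $\p\in\Min(A_0)$ whose generic fiber parametrizes irreducible representations (equivalently, $\goths(\p)\in(0,k_2-k_1)$ by Lemma~\ref{lem-crystabelline-slopes}) satisfies $c\notin\p$. Its preimage $\tp$ in $A_i$ is then a minimal prime, and $\p\mapsto\tp$ is injective on such components. This is Proposition~\ref{prop-key-irreducible} and Corollary~\ref{cor-component-injection-1}.
\item The slope is preserved under $\p\mapsto\tp$. Indeed, the generic fibers of $A_0$ and $A_i$ parametrize the same irreducible crystabelline representations up to choice of lattice, and $\lambda$ depends only on $\rho_x\otimes E'$.
\item For such $\p$ we have $e(A_0/(\p,\vp))\le 2$: the multiplicity of $A_i/\tp$ is one by Theorem~\ref{thm-BM=1-non-split}, and adjoining $x,y$ with $xy=c$ at most doubles it.
\item Components on the reducible locus are handled by Proposition~\ref{prop-key-reducible} and Corollary~\ref{cor-component-injection-2}, which give multiplicity $\le1$ and at most one component for each boundary slope $0$ and $k_2-k_1$ that occurs.
\end{enumerate}

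Write $\nu_0\in\{0,1,2\}$ for the number of boundary slopes occurring in $\calX$. The points above give the upper bound
\[
\#\Min(A_0)\le \#\{\text{interior slopes}\}+\nu_0=e'(\calX).
\]
For the interior slopes one maps into $A_1$ or $A_2$: each non-split ring has, by Theorem~\ref{thm-BM=1-non-split}, exactly one component per slope, and by Proposition~\ref{prop-crystabelline-slopes}(1) the interior slopes coincide for $\calX_1$, $\calX_2$ and $\calX$. Combined with the lower bound of Corollary~\ref{cor-naive-lower-bound}, this gives $\#\Min(A_0)=e'(\calX)$. In particular each $\gotha_\goths$ is minimal.

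For the multiplicities, sum the upper bounds using \eqref{eq-00}:
\[
e=\sum_{\p}e(A_0/(\p,\vp))\le 2\bigl(e'(\calX)-\nu_0\bigr)+\nu_0=2e'(\calX)-\nu_0.
\]
Corollary~\ref{cor-split-compare-mult} computes $e$ exactly in its four cases. In each case $e=2e'(\calX)-\nu_0$, where $\nu_0$ is exactly the number of boundary slopes listed there ($0$, $1$, $1$, $2$ respectively). Hence every bound is attained: interior components have multiplicity $2$, and boundary components have multiplicity $1$. The latter are formal power series rings over $\calO$ by Lemma~\ref{lem comm alg2}.

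The main obstacle is the reducible-locus bound (Proposition~\ref{prop-key-reducible}). It requires showing that a component with $c\in\p$ has multiplicity one and is unique for its slope. I would attempt this by exhibiting that component as a deformation space of extensions of two fixed crystabelline characters, which is formally smooth over the character deformation rings. Injectivity in the irreducible case also needs care: two components of $A_0$ could a priori have the same preimage. To rule this out I would compare generic fibers, using that on $c\neq0$ the map $\Spec A_0[1/\vp, 1/c]\to\Spec A_i[1/\vp,1/c]$ is a torus fibration with connected fibers.
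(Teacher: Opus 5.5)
Your argument is the paper's own. The injections of Corollaries~\ref{cor-component-injection-1} and~\ref{cor-component-injection-2}, combined with one component per slope for $\bar\rho_1,\bar\rho_2$ and the naive lower bound, give $\#\Min(A_0)=e'(\calX)$. Summing the bounds from Propositions~\ref{prop-key-irreducible} and~\ref{prop-key-reducible} against $e=2e'(\calX)-\nu_0$ (which is the paper's identity $e_0=n_{\irr}(\bar\rho_i)+e_0'$) then forces every bound to be an equality.

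The worries in your last paragraph are already settled inside those propositions. There a domain surjects onto $R^{\ver}(\p)$, which yields $\p=\tp_iR^{\ver}(\w,\t)$ (resp.\ $\p=\tp_iR^{\ver}(\w,\t)+(z_{3-i})$), so neither the annulus-fibration nor the extension-space argument is needed. One small correction: $A_0\simeq A_i[[x,y]]/(xy-c)$ holds literally only when the reducible locus is empty; in general the relation is the one given by Proposition~\ref{prop:kisin rings relation}.
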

	The proof of this theorem will occupy the remaining part of this section and will be completed in \S~\ref{finish-the-proof}.
	\begin{remark}
		\label{rmk-7}
		Let $\bar\rho$ be a reducible, very generic representation, and let $\tau$ be a principal series type. As indicated in Remark~\ref{rmk-2}, the Breuil--M\'ezard multiplicities of the irreducible components of $\Rkis$ can exhibit more complicated behavior than in the special case where $\bar\rho$ is non-split and $\t$ is wildly ramified.
		We treat the case where $\bar\rho$ is split and $\tau$ is wildly ramified in this paper. The passage from the non-split case to the split case relies primarily on Galois deformation theory and does not require the full strength of the ghost series results of \cite{Lghost}.
		The tamely ramified case is the subject of ongoing work. 
	\end{remark}
	
	\subsection{Various local deformation rings}
	\label{subsection-versal-deformation-ring}
	To relate the Kisin deformation rings for non-split $\bar\rho$ to those for split $\bar\rho$, it is more convenient to consider the \emph{unframed} version. We introduce various Kisin deformation rings and discuss their relations in this subsection.
	\begin{notation}
		In the remainder of this paper, we consider the following reducible and very generic representations. 	
		Write $\bar\chi_1:=\unr(\bar\b_1)\omega^{a+b+1}$ and $\bar\chi_2=\unr(\bar\b_2)\omega^b$, where $\bar\b_i\in\FF^\times$ and $a\in\{2,\dots,p-5\}$. Put
		\[
		\bar\rho_0=\bar\chi_1\oplus\bar\chi_2,\quad
		\bar\rho_1\simeq \begin{pmatrix}\bar\chi_1&*\neq0\\0&\bar\chi_2\end{pmatrix},\quad\textrm{and}\quad	\bar\rho_2\simeq \begin{pmatrix}\bar\chi_2&*\neq0\\0&\bar\chi_1\end{pmatrix}.
		\]
	\end{notation}
	We first recall from \cite{paskunas-2adic} a few constructions of (pseudo-)deformation rings. As noted in \cite[Remark~7.4]{paskunas-2adic}, these constructions also apply in the \emph{fixed determinant} setting; see also \cite[\S~3.2]{HT-BMconj}. Throughout the remainder of this section, we omit the superscript $\psi$.
	
	For $i=0,1,2$, let $R^{\ps}:=R^{\ps,\psi}_{\tr\bar\rho_i}$ be the universal pseudo-deformation ring of $\tr\bar\rho_i$ with fixed determinant $\psi\chi_{\cycl}$. For $i=1,2$, denote by $R_i:=R_{\bar\rho_i}^{\psi}$ the universal (unframed) deformation ring of $\bar\rho_i$ with fixed determinant $\psi\chi_{\cycl}$, with universal lift $\rho_i^{\univ}$. Note that taking traces induces isomorphisms (\cite[Proposition~3.6]{paskunas-2adic})
	$
	t_i:R^{\mathrm{ps}}\simeq R_{i}$.
	Let $\mathfrak{r}$ be the reducibility ideal of $R^{\mathrm{ps}}$ defined in \cite[Proposition~3.8]{paskunas-2adic}. By that proposition, it is a principal ideal generated by a regular element $c\in\gothm_{R^{\ps}}$. 
	Recall the following versal ring of $\bar\rho_0$ defined in \cite[\S~5]{paskunas-2adic} $$R^{\ver}:=R_{\bar\rho_0}^{\ver,\psi}=\frac{R^{\ps,\psi}[[x,y]]}{xy-c}.$$
	Let $
	\rho^{\ver}:G_{\QQ_p}\rightarrow\operatorname{CH}(R^{\ps})^\times\rightarrow\GL_2(R^{\ver})
	$ be the versal deformation of $\bar\rho_0$ therein, where $\operatorname{CH}(R^{\ps})$ is the Cayley--Hamilton algebra of $R^{\ps}$.
	
	Now we come to the constructions of \emph{versal Kisin deformation rings} from \cite[\S~6]{paskunas-2adic}. Let $\bar\rho$ be either
	$\bar\rho_1$, $\bar\rho_2$, or $\bar\rho_0$, and let $R$ be $R_1$, $R_2$, or $R^{\mathrm{ver}}$, respectively. Fix a $p$-adic Hodge type $(\w,\t,\psi)$.
	Define $\Spec R(\w,\t)$ to be the closure in $\Spec R$ of the set of closed points $x\in\Spec R[1/\vp]$ that are potentially crystalline of type $(\w,\t,\psi)$, equipped with the reduced scheme structure. Hence $R(\w,\t)$ is reduced and $\calO$-torsion free, characterized by the property that a closed point $x\in\Spec R[1/p]$ lies in $\Spec R(\w,\t)[1/p]$ if and only if the corresponding representation $\rho_x$ is of type $(\w,\t,\psi)$. By \cite[Remark~7.4]{paskunas-2adic}, the framed Kisin deformation rings $R_{\bar\rho_1}^{\square,\psi}(\w,\t)$, $R_{\bar\rho_2}^{\square,\psi}(\w,\t)$, and $R_{\bar\rho_0}^{\square,\psi}(\w,\t)$ are formally smooth over the versal Kisin deformation rings $R_1(\w,\t)$, $R_2(\w,\t)$, and $R^{\ver}(\w,\t)$, respectively.
	\begin{lemma}
		\label{lem-1}
		Let $A$ be one of $R_1(\w,\t)$, $R_2(\w,\t)$, or
		$R^{\ver}(\w,\t)$, and let
		$A^\square=R_{\bar\rho}^{\square,\psi}(\w,\t)$
		be the corresponding framed Kisin ring.
		Then $A$ is a reduced, equidimensional, complete Noetherian
		local $\calO$-flat algebra, and $A[1/\vp]$
		is regular. Moreover,
		$
		e(A/\vp)=e(A^\square/\vp)$,
		and extension of ideals induces a bijection
		\begin{equation}\label{eq-pf-2}
			\Min(A)\xrightarrow{\sim}\Min(A^\square):
			\qquad
			\p\longmapsto\p A^\square.
		\end{equation}
		For every $\p\in\Min(A)$, we also have
		$
		e\bigl(A/(\p,\vp)\bigr)
		=
		e\bigl(A^\square/(\p A^\square,\vp)\bigr)$.	
	\end{lemma}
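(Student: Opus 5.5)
The plan is to use the single structural input, cited just before the statement from \cite[Remark~7.4]{paskunas-2adic}: the map $A\to A^\square$ is formally smooth. Since $A^\square$ is a complete Noetherian local $\calO$-algebra, we first choose an isomorphism $A^\square\simeq A[\![z_1,\dots,z_r]\!]$ of $A$-algebras. To get it, lift a basis of the relative cotangent space $\gothm_{A^\square}/(\gothm_A A^\square+\gothm_{A^\square}^2)$ to elements $z_1,\dots,z_r\in\gothm_{A^\square}$. Formal smoothness then yields a section $A^\square\to A[\![z_1,\dots,z_r]\!]$, and a standard Nakayama and completeness argument shows the two rings are isomorphic. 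All assertions will be deduced from this power-series description.

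\emph{Ring-theoretic properties.} For $B:=A[\![z]\!]$ with several variables:
\begin{itemize}
\item $B$ is faithfully flat over $A$.
\item $B/\vp B=(A/\vp)[\![z]\!]$.
\item $B$ is reduced (respectively $\calO$-flat) if and only if $A$ is.
\end{itemize}
For regularity, $A[1/\vp]\to A^\square[1/\vp]$ is flat with geometrically regular fibres, being a completion of a smooth map. Regularity of $A^\square[1/\vp]$ (Theorem~\ref{thm-Kisin}) therefore descends to $A[1/\vp]$ by faithfully flat descent of regularity. We also need that every maximal ideal of $A[1/\vp]$ is the contraction of one of $A^\square[1/\vp]$; this holds by faithful flatness. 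Equidimensionality and the bijection \eqref{eq-pf-2} come from the following facts:
\begin{itemize}
\item For a prime $\p$ of $A$, $\p B$ is prime, because $B/\p B=(A/\p)[\![z]\!]$ is a domain.
\item $\dim B/\p B=\dim A/\p+r$.
\item Minimal primes of $B$ contract to minimal primes of $A$ by going-down for flat maps.
\item Conversely, $\p B$ is minimal whenever $\p$ is, again by going-down.
\item $\p\mapsto\p B$ is injective, since $\p B\cap A=\p$ by faithful flatness.
\end{itemize}
Equidimensionality of $A^\square$ thus transfers to $A$.

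\emph{Multiplicities.} The remaining claim is $e(C[\![z_1,\dots,z_r]\!])=e(C)$ for a Noetherian local ring $C$, applied to $C=A/\vp$ and $C=A/(\p,\vp)$. Note that $A^\square/(\p A^\square,\vp)=(A/(\p,\vp))[\![z]\!]$. Inducting on $r$, it suffices to treat one variable. In that case $C':=C[\![z]\!]$ has $z$ as a nonzerodivisor, with $C'/z=C$, and $\dim C'=\dim C+1$. Because $z$ is regular and part of a system of parameters, we have $e(\gothm_{C'};C')\leq e(\gothm_{C'};C'/z)=e(C)$. For the reverse inequality, compute Hilbert functions directly: $\operatorname{gr}_{\gothm'}(C')\simeq\operatorname{gr}_{\gothm}(C)[T]$, with $T$ the image of $z$ in degree one. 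Hence the Hilbert series of $C'$ equals that of $C$ divided by $(1-t)$, which gives $e(C')=e(C)$ exactly.

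\emph{Main obstacle.} The main obstacle is the first step, producing the isomorphism $A^\square\simeq A[\![z]\!]$ from formal smoothness in the category of complete local $\calO$-algebras. Pa\v{s}k\={u}nas's remark may instead provide the framed ring as a formally smooth \emph{algebra} defined via a torsor, for example $\widehat{\mathrm{PGL}}_2$-frames. In that case I would first try to recover the power-series structure from the explicit construction, namely frames of the versal family. Failing that, I would use the general fact that a formally smooth local homomorphism of complete Noetherian local rings with the same residue field is a power series ring. The rest of the argument is routine commutative algebra.
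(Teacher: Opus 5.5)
Your proposal is correct and follows essentially the same route as the paper: formal smoothness with equal residue fields gives $A^\square\simeq A[\![X_1,\dots,X_n]\!]$, and from this you get the minimal-prime bijection (via $\p A^\square$ prime and going-down), equidimensionality (a dimension count), regularity of $A[1/\vp]$ (faithfully flat descent), and the multiplicity equalities. Your associated-graded argument for $e(C[\![z]\!])=e(C)$ fills in a step the paper leaves implicit; two small remarks: minimality of $\p A^\square$ for minimal $\p$ follows directly from $\mathfrak q\cap A=\p$ for any prime $\mathfrak q\subseteq\p A^\square$, with no going-down needed, and the descent of regularity needs only flatness of $A[1/\vp]\to A^\square[1/\vp]$ together with surjectivity on spectra, not geometric regularity of fibres.
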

	
	\begin{proof}
		By construction, $A$ is a reduced complete Noetherian local
		$\calO$-flat algebra.
		The map $A\to A^\square$ is formally smooth and induces an
		isomorphism on residue fields, hence 
		$
		A^\square\simeq A[[X_1,\ldots,X_n]]$.
		Consequently,
		$
		A^\square/\vp
		\simeq (A/\vp)[[X_1,\ldots,X_n]]$, and this gives
		$e(A/\vp)=e(A^\square/\vp)$.
		
		For every prime ideal $\p$ of $A$, we have
		$
		A^\square/\p A^\square
		\simeq (A/\p)[[X_1,\ldots,X_n]]$.
		Thus $\p A^\square$ is prime and
		$\p A^\square\cap A=\p$.
		If $\p$ is minimal and $\mathfrak q\subseteq\p A^\square$
		is a prime ideal, then $\mathfrak q\cap A=\p$, which forces
		$\p A^\square\subseteq\mathfrak q$.
		Hence $\p A^\square$ is minimal, and
		\eqref{eq-pf-2} is well-defined and injective.
		Conversely, if $\mathfrak q\in\Min(A^\square)$, going-down
		for the flat map $A\to A^\square$
		\cite[Lemma~10.11]{eis} implies that
		$\p:=\mathfrak q\cap A$ is a minimal prime of $A$.
		Since $\p A^\square$ is a prime ideal contained in
		$\mathfrak q$, minimality gives
		$\mathfrak q=\p A^\square$.
		This proves the bijection.
		
		By Theorem~\ref{thm-Kisin}, $A^\square$ is equidimensional.
		For every $\p\in\Min(A)$, the preceding bijection gives
		$
		\dim(A/\p)
		=
		\dim(A^\square/\p A^\square)-n
		=
		\dim A^\square-n
		=
		\dim A$.
		Thus $A$ is equidimensional.
		Moreover,
		$
		A^\square/(\p A^\square,\vp)
		\simeq
		\bigl(A/(\p,\vp)\bigr)[[X_1,\ldots,X_n]]$
		gives the asserted equality
		for the special fibers of corresponding irreducible components.
		
		Finally, $A\to A^\square$ is a flat local homomorphism,
		hence faithfully flat \cite[Lemma~10.39.17]{stack}.
		Its base change
		$
		A[1/\vp]\longrightarrow A^\square[1/\vp]$
		is also faithfully flat by
		\cite[Lemma~10.39.7]{stack}.
		Since $A^\square[1/\vp]$ is regular by
		Theorem~\ref{thm-Kisin}, faithful flat descent of regularity
		\cite[Lemma~10.164.4]{stack} shows that $A[1/\vp]$
		is regular.
	\end{proof}
	\begin{remark}
		\label{rmk-3}
		Via the bijection
		$\Min(A)\simeq\Min(A^\square)$ in Lemma~\ref{lem-1},
		the decomposition of $\Min(A^\square)$ according to crystabelline
		slopes in Definition~\ref{def-decomp-of-Min(R)} induces
		a corresponding decomposition of $\Min(A)$. Indeed, the representations corresponding to closed points of the framed
		and unframed generic fibers are isomorphic, as the framing records
		only a choice of basis. Consequently, the crystabelline slope
		calculations apply equally to the unframed generic fiber. In particular, when $\t$ is wildly ramified, Theorem~\ref{thm-BM=1-non-split} holds for the unframed rings $R_i(\w,\t)=R_{\bar\rho_i}^{\psi}(\w,\t)$.
	\end{remark}

	\begin{remark}
		Alternatively, one can work directly on $\Spf(A)^\rig$ by
		repeating the argument of Proposition~\ref{prop-slope-map}
		with $(\calX,\calV)$ replaced by
		$\bigl(\Spf(A)^\rig,\rho'|_{\Spf(A)^\rig}\bigr)$,
		where $\rho'$ denotes $\rho_1^{\univ}$, $\rho_2^{\univ}$, or
		$\rho^{\ver}$, according to the choice of $A$.
		The resulting slope function agrees with that on the framed
		generic fiber after pullback, and the same calculations yield
		the decomposition of $\Min(A)$.
	\end{remark}
	From now on, we work with versal Kisin deformation rings and assume that $\t$ is a wildly ramified principal series type.
	We will establish the conclusion of Theorem~\ref{thm-main-thm-split}
	for $R^{\ver}(\w,\t)$.
	\subsection{Irreducible and reducible loci of Kisin rings}
	\label{subsection-analysis-of-comp-1}
	Let $R\in\{R_1,R_2,R^{\ver}\}$.
	We recall a classification of closed points of $\Spec R[1/\vp]$ from \cite[\S~6]{paskunas-2adic}, which is used to describe the relations between $R_{i}(\w,\t)$ and $R^{\ver}(\w,\t)$ in \cite[Lemma~6.4]{paskunas-2adic}. 
	Let $x$ be a closed point of $\Spec R[1/\vp]$, which is potentially
	crystalline of type $(\mathbf{w},\tau,\psi)$. Then $\rho_x$ is reducible
	if and only if the reducibility ideal $\mathfrak r$ is mapped to zero under the composition
	$
	R^{\mathrm{ps}}\longrightarrow R\xrightarrow{x^*}E'$, where $E'$ is the residue field of $x$.
	If
	$\rho_x$ is reducible, then it is an extension
	$$
	0\longrightarrow \kappa_1\longrightarrow \rho_x
	\longrightarrow \kappa_2\longrightarrow 0,$$
	where
	$
	\kappa_1,\kappa_2:G_{\mathbb{Q}_p}\longrightarrow
	\mathcal{O}_{E'}^\times$
	are potentially crystalline characters. 
	Moreover, after possibly relabeling $\kappa_1$ and $\kappa_2$, we may assume that the Hodge--Tate weight of $\kappa_i$ is $k_i$ for $i=1,2$, which holds automatically if the
	extension is non-split. 
	\begin{definition}
		\label{def-reducible-type}
		Let $x$ be a closed point of $\Spec R(\w,\t)[1/\vp]$. If $\rho_x$ is irreducible, then we say $x$ is of \emph{reducibility type} $\irr$. Otherwise, we say that $x$ is of
		reducibility type $1$ (resp. $2$) if
		$
		\kappa_1\equiv\bar\chi_1\pmod{\vp_{E'}}$ (resp. $
		\kappa_1\equiv\bar\chi_2\pmod{\vp_{E'}}$), where $ \vp_{E'}$ generates the maximal ideal of $\mathcal O_{E'}$.
		For $*\in\{1,2,\mathrm{irr}\}$, we define
		$I_*^{\mathrm{ver}}$ to be the ideal of $R^{\mathrm{ver}}$ and
		$I_*^{\mathrm{ps}}$ to be the ideal of $R^{\mathrm{ps}}$ given by
		\[
		I_*^{\mathrm{ver}}
		:=
		R^{\mathrm{ver}}\cap\bigcap_x\mathfrak{m}_x,
		\qquad
		I_*^{\mathrm{ps}}
		:=
		R^{\mathrm{ps}}\cap I_*^{\mathrm{ver}},
		\]
		where the intersection is taken over all closed points
		$
		x\in\operatorname{Spec}
		R^{\mathrm{ver}}(\mathbf{w},\tau)[1/p]
		$
		of reducibility type $*$.  In particular, $c\in I_1^{\ps}\cap I_2^{\ps}$.
	\end{definition}
	Using these ideals $I_?^?$, we may relate the Kisin deformation rings for
	non-split $\bar\rho$ to those for split $\bar\rho$. The following result is one of the main inputs of our proof.
	\begin{proposition}
		\label{prop:kisin rings relation}
		Recall that we have isomorphisms $R^{\mathrm{ver}}
		=
		R^{\mathrm{ps}}[\![x,y]\!]/(xy-c)$, and $R^{\mathrm{ps}}\xrightarrow{\sim}R_i$ for $i=1,2$.
		Then we have $I_{\irr}^{\ver}=I_{\irr}^{\ps}R^{\ver}$, $I_1^{\ver}=I_1^{\ps}R^{\ver}+(x)$, and $I_2^{\ver}=I_2^{\ps}R^{\ver}+(y)$. Moreover, the above isomorphisms induce the following isomorphisms.
		\begin{align*}
			&R^{\mathrm{ps}}/
			\bigl(I_i^{\mathrm{ps}}\cap I_{\mathrm{irr}}^{\mathrm{ps}}\bigr)
			\cong R_i(\mathbf{w},\tau),\quad\textrm{and} \quad
			R^{\mathrm{ver}}(\mathbf{w},\tau)\simeq R^{\ver}/(I^{\ver}_{\irr}\cap I^{\ver}_{1}\cap I^{\ver}_{2}).\\
			&(R^{\mathrm{ps}}/I_1^{\mathrm{ps}})[\![y]\!]
			\cong
			R^{\mathrm{ver}}/I_1^{\mathrm{ver}},
			\qquad
			(R^{\mathrm{ps}}/I_2^{\mathrm{ps}})[\![x]\!]
			\cong
			R^{\mathrm{ver}}/I_2^{\mathrm{ver}},
			\quad \textrm{and}\quad
			\frac{(R^{\mathrm{ps}}/I_{\mathrm{irr}}^{\mathrm{ps}})[\![x,y]\!]}{(xy-c)}
			\cong
			R^{\mathrm{ver}}/I_{\mathrm{irr}}^{\mathrm{ver}}.
		\end{align*}
	\end{proposition}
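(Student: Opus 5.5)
The plan is to derive the proposition from the point-by-point description of closed points in \cite[\S~6]{paskunas-2adic}, in particular \cite[Lemma~6.4]{paskunas-2adic}, combined with the Jacobson property of the rings $R[1/\vp]$. First describe the closed points of $\Spec R^{\ver}[1/\vp]$ lying over a given point $z$ of $\Spec R^{\ps}[1/\vp]$. If $c(z)\neq 0$, the trace is irreducible, so the fibre is the whole hyperbola $xy=c(z)$. All its points give the same irreducible representation and therefore have the same type. If $c(z)=0$, the fibre is $\{xy=0\}$. Along the line $x=0$ the versal representation $\rho^{\ver}$ specializes to extensions with a fixed sub and quotient. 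Along $y=0$ the roles are swapped. At the origin it specializes to the semisimple representation.

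Next, match these lines with the reducibility types. Using the explicit form of $\rho^{\ver}$ (its off-diagonal entries are $x$ and $y$ times the generators of the Cayley--Hamilton algebra), check that the points of type $1$ are exactly those with $\kappa_1\equiv\bar\chi_1$ lying on $x=0$. These are the points whose residual Galois lattice has $\bar\chi_1$ as a sub, so that the universal extension class is allowed to vary through $y$. The points of type $2$ lie on $y=0$ in the same way. The potentially crystalline condition depends only on $\rho_x$, hence only on the trace together with the choice of extension. For type $1$ the whole line $\{x=0\}$ over a type-$1$ pseudo-point is crystalline of the right type: the extension classes there all lie in the one-dimensional $\Ext^1$, which is crystabelline because $\theta_1\neq\theta_2$, as in the proof of Proposition~\ref{prop-crystabelline-slopes}.

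Once this is established, $I_*^{\ver}$ is the ideal of the Zariski closure of a set of closed points of the form ``base set $\times$ fibre''. Since $R^{\ver}[1/\vp]$ is Jacobson, $I_1^{\ver}$ is the radical ideal cut out by $x=0$ over $V(I_1^{\ps})$. Then
\[
R^{\ver}/(I_1^{\ps}R^{\ver}+(x))\simeq (R^{\ps}/I_1^{\ps})[\![y]\!],
\]
because $c\in I_1^{\ps}$. This ring is reduced and $\calO$-flat, so it is radical and equals $I_1^{\ver}$. The cases of type $2$ and type $\irr$ are identical, using that $(R^{\ps}/I_{\irr}^{\ps})[\![x,y]\!]/(xy-c)$ is reduced. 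For that, note that $c$ is a nonzerodivisor on $R^{\ps}/I_{\irr}^{\ps}$, since the irreducible locus is dense there. The statement $R^{\ver}(\w,\t)=R^{\ver}/(I_{\irr}^{\ver}\cap I_1^{\ver}\cap I_2^{\ver})$ is immediate from the definition as the closure of the union of the three loci. The statement for $R_i(\w,\t)$ follows in the same way, using $t_i$: a point of $\Spec R_i[1/\vp]$ is determined by its trace, and only the reducible points of type $i$ can occur, since lattices in the other type reduce to $\bar\rho_{3-i}$.

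The main obstacle I expect is the precise identification of the lines $x=0$ and $y=0$ with types $1$ and $2$, together with the reducedness and flatness of the quotients. For the identification I would follow Pa\v{s}k\=unas's computation of $\rho^{\ver}$ on the reducible locus directly, rather than re-deriving it.
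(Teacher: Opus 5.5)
Your route is the one the paper takes by citing \cite[Lemmas~6.4--6.6]{paskunas-2adic}: describe the fibres of $\Spec R^{\ver}[1/\vp]\to\Spec R^{\ps}[1/\vp]$, match the lines $x=0$ and $y=0$ with the reducibility types, then use the Jacobson property. Your point classification and your deductions for $R_i(\mathbf{w},\tau)$ and $R^{\ver}(\mathbf{w},\tau)$ are fine, but the passage from points to ideals has a gap.

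Let $S_1$ (resp.\ $S_{\irr}$) be the set of closed points of $\Spec R^{\ps}[1/\vp]$ lying under points of type $1$ (resp.\ $\irr$). Then $I_1^{\ps}$ is the ideal of $S_1$, and by your fibre analysis $I_1^{\ver}$ is the ideal of $\{(z,0,y):z\in S_1\}$. Reducedness and $\calO$-flatness of $(R^{\ps}/I_1^{\ps})[\![y]\!]$, together with the Jacobson property, only show something weaker. They show that $I_1^{\ps}R^{\ver}+(x)$ is the ideal of all $(z,0,y)$ with $z$ an arbitrary closed point of $V(I_1^{\ps})[1/\vp]$, i.e.\ of the Zariski closure of $S_1$. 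The Jacobson property does not say that the closure of the union of the fibres over $S_1$ contains the fibres over $\overline{S_1}$, and that is exactly the content of the claimed equality. This matters most for the irreducible locus. A priori, $V(I_{\irr}^{\ps}R^{\ver})[1/\vp]$ contains, over each reducible point of the closure of $S_{\irr}$, the whole cross $\{xy=0\}$, and none of its points has type $\irr$. Reducedness of $(R^{\ps}/I_{\irr}^{\ps})[\![x,y]\!]/(xy-c)$ would not exclude these points. You also do not actually prove that reducedness: $c$ being a nonzerodivisor is an input, not an argument.

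The missing ingredient is fibrewise density, and it gives the equalities directly, with no reducedness needed.

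For type $1$, take $f\in I_1^{\ver}$ and write $f\bmod x=\sum_n f_ny^n\in(R^{\ps}/c)[\![y]\!]$. For $z\in S_1$, every $(z,0,y)$ has type $1$. So the specialization $\sum_n f_n(z)y^n\in\calO_{E'}[\![y]\!]$ vanishes on the whole open unit disc, and it is $0$ by Weierstrass preparation. Thus each $f_n$ lies in $I_1^{\ps}/(c)$, and hence $f\in I_1^{\ps}R^{\ver}+(x)$.

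For the irreducible locus, first show that every element of $R^{\ps}[\![x,y]\!]/(xy-c)$ is uniquely $\sum_{n\ge0}\alpha_nx^n+\sum_{n\ge1}\beta_ny^n$ with $\alpha_n,\beta_n\in R^{\ps}$. Existence comes from rewriting $x^iy^j$ via $xy=c$; the coefficient sums converge because $c\in\gothm_{R^{\ps}}$. For uniqueness, if $(xy-c)g$ has no mixed monomials, then $g_{ij}=c^ng_{i+n,j+n}$ for all $n$, so $g=0$ by Krull's intersection theorem. Now take $f\in I_{\irr}^{\ver}$ and $z\in S_{\irr}$, so that $c(z)\neq0$. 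Then $f$ specializes to the Laurent series $\sum_n\alpha_n(z)x^n+\sum_n\beta_n(z)c(z)^nx^{-n}$, which vanishes on the open annulus $0<v_p(x)<v_p(c(z))$. Hence all its coefficients vanish, and since $c(z)\neq0$ this gives $\alpha_n(z)=\beta_n(z)=0$. Therefore $\alpha_n,\beta_n\in I_{\irr}^{\ps}$ and $f\in I_{\irr}^{\ps}R^{\ver}$. As a by-product, the same argument embeds the quotients into products of fields, which gives the reducedness you wanted.
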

	\begin{proof}
		See \cite[Lemmas~6.4, 6.5, and 6.6]{paskunas-2adic} and their proofs. 
	\end{proof}
	
	\begin{remark}
		\label{rmk-4}
		By Proposition~\ref{prop-crystabelline-slopes} and its proof, we have the following observations. A type $1$ (resp. type $2$) point occurs in $\Spec R^{\ver}(\w,\t)[1/\vp]$ if and only if 
		\begin{equation}
			\label{eq-3}
			\begin{aligned}
				\omega^{a+b+1}\textrm{ (resp. }\omega^b\textrm{)}\in\{\bar\theta_1|_{\Delta}\cdot\omega^{-k_1},\bar\theta_2|_{\Delta}\cdot\omega^{-k_1}\}.
			\end{aligned}
		\end{equation}
		Moreover, the following table summarizes equivalent conditions to \eqref{eq-3}.
		In each row, the conditions in the last two columns are equivalent
		to each other and to the existence of a type $i\in\{1,2\}$ point on $\Spec R^{\ver}(\w,\t)[1/\vp]$ of slope $\goths\in\{0,k_2-k_1\}$,
		with the type and slope specified in the first column.
		\begin{table}[H]
			\centering
			\renewcommand{\arraystretch}{1.4}
			\begin{tabular}{|c|c|c|}
				\hline
				Type and slope & Character condition & Numerical condition \\
				\hline
				Type $1$, slope $k_2-k_1$
				& $\bar\theta_1|_{\Delta}\cdot\omega^{-k_1}=\omega^{a+b+1}$
				& $k_0=\{s_\e+a\}$ \\
				\hline
				Type $1$, slope $0$
				& $\bar\theta_2|_{\Delta}\cdot\omega^{-k_1}=\omega^{a+b+1}$
				& $s_\e=0$ \\
				\hline
				Type $2$, slope $k_2-k_1$
				& $\bar\theta_1|_{\Delta}\cdot\omega^{-k_1}=\omega^b$
				& $k_0=\{s'_\e+a'\}=\{s_\e-1\}$ \\
				\hline
				Type $2$, slope $0$
				& $\bar\theta_2|_{\Delta}\cdot\omega^{-k_1}=\omega^b$
				& $s'_\e=\{s_\e+a+1\}=0$ \\
				\hline
			\end{tabular}
		\end{table}
		Again, by our genericity assumption, for each
		$\goths\in\{0,k_2-k_1\}$, type $1$ and type $2$ points of
		slope $\goths$ cannot both occur on
		$\Spec R^{\ver}(\w,\t)[1/\vp]$.
	\end{remark}
	
	Recall the decomposition of $\Min(R^{\ver}(\w,\t))$ defined in Definition~\ref{def-decomp-of-Min(R)}, namely, $$\Min(R^{\ver}(\w,\t))=\Sigma_{(0,k_2-k_1)}(\bar\rho_0)\sqcup\Sigma_0(\bar\rho_0)\sqcup\Sigma_{k_2-k_1}(\bar\rho_0).$$ We summarize the relations between this decomposition and the ideals $I^{\ver}_{*}$ of Definition~\ref{def-reducible-type} in the following lemma.
	\begin{lemma}
		\label{lem-relation-slope-and-reducible-type}
		Let $A=R^{\ver}(\w,\t)$, and let $\gothp\in\Min(A)$. We also use $I_*^{\ver}$ to denote the image of this ideal in $A$.  
		\begin{enumerate}
			\item 
			We have  $\p\in V(I_{\irr}^{\ver})$ if and only if $\p\in\Sigma_{(0,k_2-k_1)}(\bar\rho_0)$, in which case $c\notin\p$.
			\item 
			If $\p\in\Sigma_\goths(\bar\rho_0)$ for $\goths\in\{0,k_2-k_1\}$, then we have $\p\in V(I_i^{\ver})\setminus V(I_{3-i}^{\ver})$, where $i\in\{1,2\}$ is given by
			\begin{equation}
				\label{eq-4}
				i=\begin{cases}
					1,&\textrm{if } \goths=0,\textrm{ and } s_\e=0,\\
					2,&\textrm{if } \goths=0,\textrm{ and } s'_\e=0,\\
					1,&\textrm{if } \goths=k_2-k_1,\textrm{ and } \{s_\e+a\}=\{k_2-k_1-1\},\\
					2,&\textrm{if } \goths=k_2-k_1,\textrm{ and } \{s'_\e+a'\}=\{k_2-k_1-1\}.
				\end{cases}
			\end{equation}
		\end{enumerate}
	\end{lemma}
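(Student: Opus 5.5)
The plan is to use the slope partition of $\Min(A)$ from Corollary~\ref{rmk-1}, which is transported to the versal ring $A=R^{\ver}(\w,\t)$ by Remark~\ref{rmk-3}, together with the description of closed points in Proposition~\ref{prop:kisin rings relation} and Lemma~\ref{lem-crystabelline-slopes}. The basic principle is this. Since $A$ is reduced and $A[1/\vp]$ is Jacobson, each $\p\in\Min(A)$ is the intersection of the maximal ideals $\gothm_x\cap A$ of the closed points $x$ lying on $\Spec(A/\p)[1/\vp]$. All these points share the slope $\goths(\p)$. Moreover, because $A[1/\vp]$ is regular, the connected components are disjoint, so these points lie on no other component.

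For (1), suppose first that $\goths(\p)\in(0,k_2-k_1)$. By Lemma~\ref{lem-crystabelline-slopes}, every closed point on the component of $\p$ is irreducible, i.e.\ of reducibility type $\irr$. Hence each such $\gothm_x\cap A$ contains $I_{\irr}^{\ver}$, and taking the intersection gives $I_{\irr}^{\ver}\subseteq\p$. Conversely, suppose $\goths(\p)\in\{0,k_2-k_1\}$. Then all points on the component are reducible, so $c$, which generates the reducibility ideal, vanishes at each of them; thus $c\in\p$. On the other hand, if $I_{\irr}^{\ver}\subseteq\p$, I would argue that $c\notin\p$, which gives the contradiction. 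Using $R^{\ver}/I^{\ver}_{\irr}\cong (R^{\ps}/I_{\irr}^{\ps})[[x,y]]/(xy-c)$, the minimal primes of $V(I_{\irr}^{\ver})$ come from minimal primes of $R^{\ps}/I^{\ps}_{\irr}$. That ring is reduced, and its irreducible points are Zariski dense, with $c\neq0$ at each of them. So $c$ lies in no minimal prime of $R^{\ps}/I_{\irr}^{\ps}$. Since $\p$ is a minimal prime of $A$ containing $I_{\irr}^{\ver}$, it is minimal over $I_{\irr}^{\ver}$, and its contraction is then a minimal prime of $R^{\ps}/I_{\irr}^{\ps}$ (by flatness of the power series extension and going down). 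Therefore $c\notin\p$. This step — transferring ``$c$ is a nonzerodivisor mod $I^{\ps}_{\irr}$'' to minimal primes of $A$ over $I^{\ver}_{\irr}$ — is the one I expect to need the most care, and I will lean on the explicit isomorphisms of Proposition~\ref{prop:kisin rings relation}.

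For (2), let $\goths=\goths(\p)\in\{0,k_2-k_1\}$. Remark~\ref{rmk-4} shows that, given the numerical condition in \eqref{eq-4}, all reducible points of slope $\goths$ are of a single type $i$. Indeed, the table in Remark~\ref{rmk-4} determines the type from the slope and the congruence condition, and genericity excludes the other type at the same slope. Hence every closed point of the component has type $i$, and intersecting gives $I_i^{\ver}\subseteq\p$. It remains to show $I_{3-i}^{\ver}\not\subseteq\p$. Suppose instead that $\p\supseteq I_1^{\ver}+I_2^{\ver}$. Then $\p$ would contain $x$ and $y$, and so $\dim A/\p\le\dim R^{\ver}/(x,y)$. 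But $R^{\ver}/I_i^{\ver}\cong(R^{\ps}/I_i^{\ps})[[y]]$ (respectively $[[x]]$) shows that $\p$, being minimal over $I_i^{\ver}$ and equidimensional of dimension $\dim A$, has the form $\mathfrak q[[y]]$ with $\mathfrak q$ a minimal prime of $R^{\ps}/I_i^{\ps}$. In particular $y\notin\p$ (respectively $x\notin\p$), which contradicts $x,y\in\p$. This dimension and extension argument via the power series description finishes the lemma.
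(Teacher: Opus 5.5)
Your proof is correct, but in both parts it argues differently from the paper.

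In (1), the paper stays inside $A$. By the Jacobson property and Lemma~\ref{lem-crystabelline-slopes}, it writes $I^{\ver}_{\irr}=\bigcap_{\p'\in\Sigma_{(0,k_2-k_1)}(\bar\rho_0)}\p'$, a finite intersection of minimal primes. So $I^{\ver}_{\irr}\subseteq\p$ forces $\p'\subseteq\p$ for some such $\p'$, hence $\p=\p'$. The claim $c\notin\p$ is then immediate from any (irreducible) closed point on the component.

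For the converse, you instead show that no minimal prime over $I^{\ver}_{\irr}$ contains $c$. You run a density argument in $R^{\ps}/I^{\ps}_{\irr}$ and transport it through the flat map $R^{\ps}/I^{\ps}_{\irr}\to(R^{\ps}/I^{\ps}_{\irr})[\![x,y]\!]/(xy-c)$ by going-down. This is valid, and the flatness is the Matsumura input from the proof of Proposition~\ref{prop-key-irreducible}, which does not depend on this lemma. It is a detour, though: the finite-intersection argument you implicitly use at the level of $R^{\ps}$ works verbatim on $A$ and needs no flatness.

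In (2), the paper proves $I^{\ver}_{3-i}\not\subseteq\p$ via slopes. Every type $3-i$ point has the complementary slope $k_2-k_1-\goths$, so $I^{\ver}_{3-i}$ contains the intersection of $\Sigma_{k_2-k_1-\goths}(\bar\rho_0)$. Containment in $\p$ would then put $\p$ in that set, or force $I^{\ver}_{3-i}=A$.

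Your argument here is purely algebraic:
\begin{itemize}
\item $\p$ is minimal over $I^{\ver}_i$ by Lemma~\ref{lem-minimal-primes}(1);
\item the minimal primes of $R^{\ver}/I^{\ver}_i\simeq(R^{\ps}/I^{\ps}_i)[\![z_i]\!]$ are extended from minimal primes of $R^{\ps}/I^{\ps}_i$, so they miss $z_i$;
\item $z_i\in I^{\ver}_{3-i}$ by Proposition~\ref{prop:kisin rings relation}.
\end{itemize}
This avoids the type/slope bookkeeping for that half and is essentially the observation in the remark following the lemma. It relies more heavily on the explicit presentations of Proposition~\ref{prop:kisin rings relation}. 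The paper's route has the advantage of making the link between reducibility type and boundary slope explicit.

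Your dimension remarks ($\dim A/\p\le\dim R^{\ver}/(x,y)$ and the equidimensionality) play no role and can be dropped.
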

	\begin{proof}
		For (1), the ring $A[1/\vp]$ is Jacobson
		by \cite[Lemma~15.10.5]{stack}, and
		Lemma~\ref{lem-crystabelline-slopes} gives
		\begin{equation}
			\label{eq-5}
			\begin{aligned}
				I_{\irr}^{\ver}
				= A\cap
				\bigcap_{\rho_x\text{ irreducible}}\mathfrak m_x 
				= A\cap
				\bigcap_{\p\in\Sigma_{(0,k_2-k_1)}(\bar\rho_0)}\p A[1/\vp]
				= \bigcap_{\p\in\Sigma_{(0,k_2-k_1)}(\bar\rho_0)}\p.
			\end{aligned}
		\end{equation}
		Since $A$ has finitely many minimal primes, it follows that for every $\p\in\Min(A)$,
		$
		I_{\irr}^{\ver}\subseteq\p$ if and only if $
		\p\in\Sigma_{(0,k_2-k_1)}(\bar\rho_0)$.
		For such a prime $\p$, any closed point $x$ of $V(\p A[1/\vp])$ corresponds to an irreducible representation $\rho_x$, hence
		$c\notin\mathfrak m_x$. This implies that $c\notin\p$.
		
		For (2), let $\p\in\Sigma_{\goths}(\bar\rho_0)$, where
		$\goths\in\{0,k_2-k_1\}$.
		By our genericity assumption and Remark~\ref{rmk-4},
		all closed points of $\Spec A[1/\vp]$ of slope $\goths$ have the
		same type $i$, as specified in \eqref{eq-4}.
		In particular, every closed point of $V(\p A[1/\vp])$ has type $i$ and slope $\goths$.
		Since $A[1/\vp]$ is Jacobson, we obtain
		$$I^{\ver}_{i}=\bigcap_{x\textrm{ of type } i}(\gothm_x\cap A)\subseteq\bigcap_{\substack{x\textrm{ of type } i\\\goths(x)=\goths}}(\gothm_x\cap A)\subseteq\bigcap_{\substack{
				x\textrm{ of type } i\\
				\goths(x)=\goths\\
				\p\subseteq\gothm_x\cap A
		}}(\gothm_x\cap A)=\p.$$
		It remains to show that $I_{3-i}^{\ver}\not\subseteq\p$.
		Set $\goths'=k_2-k_1-\goths$.
		As we mentioned above, no type $3-i$ point has slope
		$\goths$, and thus every type $3-i$ point has slope $\goths'$.
		Consequently,
		\[
		\begin{aligned}
			I_{3-i}^{\ver}
			&= \bigcap_{x\text{ of type }3-i}
			(\mathfrak m_x\cap A) \supseteq
			\bigcap_{\goths(x)=\goths'}
			(\mathfrak m_x\cap A)
			= \bigcap_{\p'\in\Sigma_{\goths'}(\bar\rho_0)}\p'.
		\end{aligned}
		\]
		Suppose that $I_{3-i}^{\ver}\subseteq\p$. 
		If $\Sigma_{\goths'}(\bar\rho_0)$ is empty,  then $A=I_{3-i}^{\ver}$, and hence the preceding inclusion gives $A=I_{3-i}^{\ver}\subseteq\p$, a contradiction.
		Otherwise,
		there is some
		$\p'\in\Sigma_{\goths'}(\bar\rho_0)$ such that $\p'=\p$, which is a contradiction by comparing the slopes.
		Thus $I_{3-i}^{\ver}\not\subseteq\p$, as required.
	\end{proof}
	\begin{remark}
		The proof of \cite[Lemma~6.4]{paskunas-2adic} shows that every
		closed point $x$ of the generic fiber whose maximal ideal contains
		$I_{3-i}^{\ver}$ is of type $3-i$.
		This immediately implies that $\p\notin V(I_{3-i}^{\ver})$,
		since every closed point of $\Spec((A/\p)[1/\vp])$ has type $i$.
		We have included the more detailed argument above to make the
		relationship between reducibility types and slopes explicit.
	\end{remark}
	\subsection{Analysis of irreducible components}
	\label{subsection-analysis-of-comp-2}
	Let $\gothp$ be a minimal prime of $R^{\ver}(\mathbf{w},\tau)=R^{\ver}/(I^{\ver}_{\irr}\cap I^{\ver}_1\cap I^{\ver}_2)$, and write $R^{\ver}(\gothp):=R^{\ver}(\mathbf{w},\tau)/\gothp$.
	In what follows, we will relate $\p$ to a minimal prime $\tilde{\gothp}$ of $R_i(\w,\t)$ using Proposition~\ref{prop:kisin rings relation}. We identify prime ideals of these quotient rings with their inverse images in $R^{\ver}$ or $R^{\ps}$. For instance, we also view $\p$ as a point of $\Spec R^{\ver}$
	lying in $V(I^{\ver}_{\irr}\cap I^{\ver}_1\cap I^{\ver}_2)$.
	
	We first consider $\p\in\Sigma_{(0,k_2-k_1)}(\bar\rho_0)$.  By Proposition~\ref{prop:kisin rings relation}, we have the following ring homomorphisms (for $i=1,2$)
	\begin{equation}
		\label{eq-pf-3}
		\begin{aligned}
			R_i(\mathbf{w},\tau)
			&\simeq R^{\ps}/(I^{\ps}_{\irr}\cap I^{\ps}_i)
			\twoheadrightarrow R^{\ps}/I^{\ps}_{\irr}
			\longrightarrow
			\frac{(R^{\ps}/I^{\ps}_{\irr})[\![x,y]\!]}{(xy-c)}
			\\
			&\simeq R^{\ver}/I_{\irr}^{\ver}
			\twoheadleftarrow
			R^{\ver}/(I^{\ver}_{\irr}\cap I^{\ver}_{1}\cap I^{\ver}_{2})
			\simeq R^{\ver}(\mathbf{w},\tau).
		\end{aligned}
	\end{equation}
	By Lemma~\ref{lem-relation-slope-and-reducible-type}, $\p\in V(I_{\irr}^{\ver})$ is a minimal prime of $R^{\ver}/I^{\ver}_{\irr}$ and $c\notin\p$. We denote the preimage of $\p$ in $R_i(\w,\t)$ by $\tilde{\p}_i$ under the above map. Clearly $\tilde{\p}_i\in V(I_{\irr}^{\ps})$ is a prime ideal of $R_{i}(\w,\t)$. Moreover, we also have $\tilde{\p}_i\notin V(I_i^{\ps})$ since $c\in (I_{1}^{\ps}\cap I_2^{\ps})\setminus \p$.
	\begin{proposition}
		\label{prop-key-irreducible}
		The prime ideal $\tilde\gothp_i$ is minimal and hence corresponds to an irreducible component of $R_i(\mathbf{w},\tau)$. In particular, $R_i(\mathbf{w},\tau)/\tilde{\gothp}_i$ is a formal power series ring over $\calO$. Moreover, we have the following isomorphism
		\[
		(R_i(\mathbf{w},\tau)/\tilde{\gothp}_i)[\![x,y]\!]/(xy-\bar c)\simeq R^{\ver}(\gothp)=R^{\ver}(\w,\t)/\p,
		\]
		where $\bar c$ is the image of $c$ in $R^{\ps}/\tp_i$, and  $e(R^{\ver}(\gothp)/\varpi)\leq2$.
	\end{proposition}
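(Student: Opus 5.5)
The plan is to work entirely through the chain \eqref{eq-pf-3}, writing $B:=R^{\ps}/I^{\ps}_{\irr}$ and using $R^{\ver}/I^{\ver}_{\irr}\simeq B[\![x,y]\!]/(xy-c)$ from Proposition~\ref{prop:kisin rings relation}. Since $\p$ contains $I^{\ver}_{\irr}$ and is minimal in $R^{\ver}(\w,\t)$, it is a minimal prime of $B[\![x,y]\!]/(xy-c)$. The first step is to describe the minimal primes of $B[\![x,y]\!]/(xy-c)$ when $c$ is nonzero on each of them. For a minimal prime $\mathfrak q$ of $B$ with $c\notin\mathfrak q$, the ring $(B/\mathfrak q)[\![x,y]\!]/(xy-c)$ is a domain: it is the quotient of the domain $(B/\mathfrak q)[\![x,y]\!]$ by $xy-\bar c$, and $xy-\bar c$ is irreducible (indeed prime) there. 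To prove this, I would pass to $B/\mathfrak q$ localized at $\bar c$, where $x$ becomes a unit, or argue via the Weierstrass-type presentation in \cite[\S~5]{paskunas-2adic}. Moreover, $B[\![x,y]\!]/(xy-c)$ is flat over $B$, because $xy-c$ is a nonzerodivisor on fibres. Going-down therefore shows that every minimal prime $\p$ lies over a minimal prime $\mathfrak q$ of $B$, and hence $\p=\mathfrak q B[\![x,y]\!]+(xy-c)$. The conclusion is that $\tp_i\cap R^{\ps}$, which equals the contraction $\mathfrak q$, is a minimal prime of $B$, and that
\[
R^{\ver}(\p)\simeq (B/\mathfrak q)[\![x,y]\!]/(xy-\bar c).
\]

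Next I would show that $\tp_i$ is minimal in $R_i(\w,\t)\simeq R^{\ps}/(I^{\ps}_i\cap I^{\ps}_{\irr})$. Let $\mathfrak q'\subseteq\tp_i$ be a minimal prime of this ring. It contains $I^{\ps}_i\cap I^{\ps}_{\irr}$, and so it contains either $I^{\ps}_i$ or $I^{\ps}_{\irr}$. Since $c\in I^{\ps}_i$ and $c\notin\tp_i$, the first option is excluded. Hence $\mathfrak q'\supseteq I^{\ps}_{\irr}$, and minimality of $\tp_i$ in $B$ forces $\mathfrak q'=\tp_i$. This gives the displayed isomorphism.

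The remaining task is the multiplicity bound. Set $D:=R_i(\w,\t)/\tp_i$. The multiplicities $e(R_i(\w,\t)/\vp)$ are known to equal the number of components by Theorem~\ref{thm-BM=1-non-split} together with Lemma~\ref{lem-1} and Remark~\ref{rmk-3}. It follows that $e(D/\vp)=1$, so $D\simeq\calO[\![t_1,\dots,t_d]\!]$ by Lemma~\ref{lem comm alg2}. Then
\[
R^{\ver}(\p)/\vp\simeq \FF[\![t_1,\dots,t_d,x,y]\!]/(xy-\bar c).
\]
This is a hypersurface whose equation has order at most $2$, because $\bar c$ lies in the maximal ideal and $xy$ has order exactly $2$. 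The multiplicity of a hypersurface $\FF[\![\dots]\!]/(f)$ equals the order of $f$, so $e\le2$; note that $f\neq0$ since $xy$ survives. Alternatively, one can use the surjection from $\FF[\![t,x,y]\!]/(xy)$-type degenerations, or compute $e$ via the parameter system $(t,x+y)$.

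I expect the main obstacle to be the primality of $(B/\mathfrak q)[\![x,y]\!]/(xy-\bar c)$ and the flatness and going-down step, so that $\p$ really is the extended ideal. If these become delicate, I would try instead to use that $D$ is already known to be a power series ring over $\calO$ once minimality is established independently. With $D$ regular, $D[\![x,y]\!]/(xy-\bar c)$ is a normal complete intersection domain when $\bar c\neq0$, by Serre's criterion: it is $R_1$ because the singular locus lies in $V(x,y,\bar c)$, which has codimension $2$. This domain surjects onto $R^{\ver}(\p)$ and has the same dimension, so the surjection is an isomorphism.
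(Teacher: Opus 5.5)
Your proposal is correct and is essentially the paper's proof. It uses the same ingredients: flatness of $R^{\ps}/I^{\ps}_{\irr}\to R^{\ver}/I^{\ver}_{\irr}$ with going-down; the observation $c\in I^{\ps}_i\setminus\tp_i$ (the paper's Lemma~\ref{lem-minimal-primes}(2)) for minimality in $R_i(\w,\t)$; the power-series structure from Theorem~\ref{thm-BM=1-non-split}; primality of $\tp_i R^{\ver}(\w,\t)$ together with minimality of $\p$; and the hypersurface-order bound of Lemma~\ref{lem:mult leq 2}.

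The paper uses the ordering of your fallback: it proves minimality of $\tp_i$ first, so one only needs $D[\![x,y]\!]/(xy-\bar c)$ to be a domain when $D$ is a power series ring. The paper asserts that without proof, and your Serre-criterion argument justifies it. That ordering is the safer one, because your localization-at-$\bar c$ sketch for an arbitrary complete local domain $B/\mathfrak q$ does not by itself prove primality.
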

	\begin{proof}
		From \eqref{eq-pf-3}, we have the following homomorphisms
		\[
		R_i(\mathbf{w},\tau)
		\simeq R^{\ps}/(I^{\ps}_{\irr}\cap I^{\ps}_i)
		\twoheadrightarrow R^{\ps}/I^{\ps}_{\irr}
		\longrightarrow
		\frac{(R^{\ps}/I^{\ps}_{\irr})[\![x,y]\!]}{(xy-c)}
		\simeq R^{\ver}/I_{\irr}^{\ver}.
		\]
		By the corollary after \cite[Theorem~22.6]{mat} (see also \cite[Lemma~6.2]{paskunas-2adic}), the homomorphism $R^{\ps}/I_{\irr}^{\ps}\rightarrow R^{\ver}/I_{\irr}^{\ver}$ is flat (taking $A=R^{\ps}/I^{\ps}_{\irr}$, $B=A[\![x,y]\!]$, $f(x,y)=xy-c$). Hence going-down for a flat map (\cite[Lemma~10.11]{eis}) implies that the prime ideal $\tilde\gothp_i$ is minimal over $I^{\ps}_{\irr}$. Moreover, by Lemma~\ref{lem-minimal-primes}(2), $\tilde\p_i\notin V(I_{i}^{\ps})$ implies that $\tp_i$ is also a minimal prime ideal over $I_{\irr}^{\ps}\cap I_i^{\ps}$, and hence a minimal prime of $R_i(\mathbf{w},\tau)
		\simeq R^{\ps}/(I^{\ps}_{\irr}\cap I^{\ps}_i)$.
		Therefore, we deduce that $R^{\ps}/\tp_i=R_i(\w,\t)/\tp_i$ is a formal power series ring by Theorem~\ref{thm-BM=1-non-split} and Remark~\ref{rmk-3}.
		Next,  consider the following homomorphisms obtained by reducing modulo $\tp_i$
		\[
		C:=\frac{(R^{\ps}/\tilde\gothp_i)[[x,y]]}{(xy-\bar c)}
		\simeq
		\frac{R^{\ps}[[x,y]]/(xy-c)}
		{\tilde\gothp_i\bigl(R^{\ps}[[x,y]]/(xy-c)\bigr)}= \frac{R^{\ver}(\mathbf{w},\tau)}{\tilde\gothp_iR^{\ver}(\w,\t)}
		\twoheadrightarrow R^{\ver}(\mathbf{w},\tau)/\gothp.
		\]
		Since $c\notin\tp_i$, we have that $C$ is an integral domain, which implies that $\tilde\gothp_i R^{\ver}(\mathbf{w},\tau)$ is a prime ideal. This forces $\tilde\gothp_i R^{\ver}(\mathbf{w},\tau)\subseteq\gothp$ to be an equality, since $\gothp$ is a minimal prime ideal of $R^{\ver}(\mathbf{w},\tau)$. Hence we obtain the required isomorphism. The last inequality $e(R^{\ver}(\gothp)/\varpi)\leq2$ then follows from Lemma~\ref{lem:mult leq 2}.
	\end{proof}
	\begin{corollary}
		\label{cor-component-injection-1}
		For $i=1,2$, there is an injection from $\Sigma_{(0,k_2-k_1)}(\bar\rho_0)$ to $\Sigma_{(0,k_2-k_1)}(\bar\rho_i)$.
	\end{corollary}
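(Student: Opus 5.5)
The injection will be the map $\p\mapsto\tp_i$ constructed just before Proposition~\ref{prop-key-irreducible}. Proposition~\ref{prop-key-irreducible} already says that $\tp_i$ is a minimal prime of $R_i(\w,\t)$. Two things remain: (a) $\tp_i$ lies in $\Sigma_{(0,k_2-k_1)}(\bar\rho_i)$, i.e.\ its slope lies in the open interval (under the identification of Remark~\ref{rmk-3}); and (b) the map is injective.

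For (a), we have $\tp_i\supseteq I^{\ps}_{\irr}$ by construction, and $\tp_i\notin V(I_i^{\ps})$ because $c\notin\tp_i$. Take any closed point $x$ of $\Spec(R_i(\w,\t)/\tp_i)[1/\vp]$ lying off the nonempty open set where $c\neq0$; the closed points with $c(x)\neq 0$ are dense in the component, and they correspond to irreducible $\rho_x$, since reducibility is equivalent to $\mathfrak r\mapsto 0$. By Lemma~\ref{lem-crystabelline-slopes}, such points have slope in $(0,k_2-k_1)$. Since slope is constant on components (Corollary~\ref{cor-naive-lower-bound}, Remark~\ref{rmk-3}), $\tp_i\in\Sigma_{(0,k_2-k_1)}(\bar\rho_i)$. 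Alternatively, one can argue directly as in Lemma~\ref{lem-relation-slope-and-reducible-type}(1), applied to the ring $R_i(\w,\t)$: the minimal primes containing $I^{\ps}_{\irr}$ are exactly those of the irreducible locus.

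For (b), the isomorphism in Proposition~\ref{prop-key-irreducible} shows $\tp_i R^{\ver}(\w,\t)=\p$. So $\p$ is recovered from $\tp_i$ as the extension of $\tp_i$ along the map \eqref{eq-pf-3}. Hence if $\tp_i=\tp'_i$ for $\p,\p'\in\Sigma_{(0,k_2-k_1)}(\bar\rho_0)$, then $\p=\tp_iR^{\ver}(\w,\t)=\p'$.

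The only delicate point is the bookkeeping of identifications: primes of $R^{\ps}$ versus primes of $R_i$ via $t_i$, and the fact that the extension $\tp_iR^{\ver}$ computed in $R^{\ver}/I_{\irr}^{\ver}$ agrees with the one computed in $R^{\ver}(\w,\t)$. This is handled exactly as in the displayed chain of isomorphisms for $C$ in the proof of Proposition~\ref{prop-key-irreducible}, so I do not expect a real obstacle.
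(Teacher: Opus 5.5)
Your proposal is correct and follows the paper's proof. The map is $\p\mapsto\tp_i$, minimality comes from Proposition~\ref{prop-key-irreducible}, and injectivity comes from $\tp_iR^{\ver}(\w,\t)=\p$. Your step (a), which uses $c\notin\tp_i$ and Lemma~\ref{lem-crystabelline-slopes} to show that $\tp_i$ has slope in $(0,k_2-k_1)$, spells out a point the paper leaves implicit. One wording slip: you want a closed point lying \emph{in} the open set where $c\neq0$, not off it.
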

	\begin{proof}
		This follows directly from Proposition~\ref{prop-key-irreducible}, where the map is given by $\p\mapsto\tilde{\p}_i$. Note that this map is injective since $\tp_iR^{\ver}(\w,\t)=\p$.
	\end{proof}
	
	We next deal with minimal primes $\p\in\Sigma_0(\bar\rho_0)\cup\Sigma_{k_2-k_1}(\bar\rho_0)$. 
	Fix such a minimal prime and assume that $\p\in\Sigma_\goths(\bar\rho_0)$, where $\goths\in\{0,k_2-k_1\}$. By Lemma~\ref{lem-relation-slope-and-reducible-type}(1)(2), we have $\p\in V(I_i^{\ver})\setminus(V(I_{3-i}^{\ver})\cup V(I_{\irr}^{\ver}))$ for some $i\in\{1,2\}$ uniquely determined by $\p$. We slightly change the notation of $x,y$ in the presentation of $R^{\ver}$, and we set $z_1=y$ and $z_2=x$. Hence $z_i\in I_{3-i}^{\ver}$ for $i=1,2$. Then from Proposition~\ref{prop:kisin rings relation}, we have $z_{3-i}\in\p$, and the following homomorphisms
	\begin{align*}
		R_i(\mathbf{w},\tau)&\simeq R^{\ps}/(I_i^{\ps}\cap I^{\ps}_{\irr}) \twoheadrightarrow R^{\ps}/I_i^{\ps}\rightarrow (R^{\ps}/I_i^{\ps})[\![z_i]\!]
		\\
		&\simeq R^{\ver}/I_i^{\ver}\twoheadleftarrow R^{\ver}/(I_1^{\ver}\cap I_2^{\ver}\cap I^{\ver}_{\irr})=\Rvv.
	\end{align*}
	Let $\tp_i$ be the preimage of the minimal prime $\p$ under the above map $R_i(\w,\t)\rightarrow R^{\ver}/I_i^{\ver}$. Clearly, we know $\tp_i\in V(I_i^{\ps})$. Moreover, using the relations $I^{\ps}_{\irr}R^{\ver}=I^{\ver}_{\irr}$ from Proposition~\ref{prop:kisin rings relation} and $\p\notin V(I^{\ver}_{\irr})$, we deduce that $\tp_i\notin V(I^{\ps}_{\irr})$.
	\begin{proposition}
		\label{prop-key-reducible}
		The prime ideal $\tilde\gothp_i$ is minimal and hence corresponds to an irreducible component of $R_i(\mathbf{w},\tau)$. In particular, $R_i(\mathbf{w},\tau)/\tilde{\gothp}_i$ is a formal power series ring over $\calO$. Moreover, we have the following isomorphism
		\[
		(R_i(\mathbf{w},\tau)/\tilde{\gothp}_i)[\![z_i]\!]\simeq R^{\ver}(\gothp)=R^{\ver}(\w,\t)/\p.
		\]
		In particular, $R^{\ver}(\p)$ is a formal power series ring and $e(R^{\ver}(\gothp)/\varpi)=1$.
	\end{proposition}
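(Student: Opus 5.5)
We follow the proof of Proposition~\ref{prop-key-irreducible} closely, replacing the irreducible locus by the type $i$ locus. Consider the chain
\[
R_i(\w,\t)\simeq R^{\ps}/(I_i^{\ps}\cap I^{\ps}_{\irr})\twoheadrightarrow R^{\ps}/I_i^{\ps}\longrightarrow (R^{\ps}/I_i^{\ps})[\![z_i]\!]\simeq R^{\ver}/I_i^{\ver}
\]
from Proposition~\ref{prop:kisin rings relation}.

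\textbf{Step 1: $\p$ is minimal over $I_i^{\ver}$.} By Lemma~\ref{lem-relation-slope-and-reducible-type}(2), $\p\in V(I_i^{\ver})$. Since $\p$ is minimal over $I_1^{\ver}\cap I_2^{\ver}\cap I_{\irr}^{\ver}$ and contains $I_i^{\ver}$, it is minimal over $I_i^{\ver}$ (any smaller prime containing $I_i^{\ver}$ also contains the intersection). Thus $\p$ is a minimal prime of $R^{\ver}/I_i^{\ver}$.

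\textbf{Step 2: $\tp_i$ is minimal in $R_i(\w,\t)$.} The map $R^{\ps}/I_i^{\ps}\to (R^{\ps}/I_i^{\ps})[\![z_i]\!]$ is flat, so going-down \cite[Lemma~10.11]{eis} shows that the contraction $\tp_i$ of the minimal prime $\p$ is minimal over $I_i^{\ps}$. We already know $\tp_i\notin V(I^{\ps}_{\irr})$. Lemma~\ref{lem-minimal-primes}(2), applied with the roles of $I_i$ and $I_{\irr}$ interchanged relative to Proposition~\ref{prop-key-irreducible}, then gives that $\tp_i$ is minimal over $I_i^{\ps}\cap I_{\irr}^{\ps}$. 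Hence $\tp_i$ is a minimal prime of $R_i(\w,\t)$. By Theorem~\ref{thm-BM=1-non-split} together with Remark~\ref{rmk-3}, the quotient $R_i(\w,\t)/\tp_i=R^{\ps}/\tp_i$ is a formal power series ring over $\calO$.

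\textbf{Step 3: the isomorphism.} Reduce the chain modulo $\tp_i$:
\[
(R^{\ps}/\tp_i)[\![z_i]\!]\simeq (R^{\ver}/I_i^{\ver})/\tp_i(R^{\ver}/I_i^{\ver})\twoheadrightarrow R^{\ver}/\p .
\]
The left-hand ring is a power series ring over a domain, hence a domain. So $\tp_i R^{\ver}+I_i^{\ver}$ is a prime contained in $\p$ and containing $I_i^{\ver}$. By the minimality in Step~1 it equals $\p$, which yields
\[
(R_i(\w,\t)/\tp_i)[\![z_i]\!]\simeq R^{\ver}(\p).
\]
Consequently $R^{\ver}(\p)$ is a power series ring over $\calO$. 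Its special fiber is a power series ring over $\FF$, which is regular, so $e(R^{\ver}(\p)/\vp)=1$.

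The main obstacle is Step~2: the precise form of Lemma~\ref{lem-minimal-primes}(2) must apply symmetrically. The needed statement is that a prime minimal over $I$ and not containing $J$ is minimal over $I\cap J$. This holds because $I\cap J\supseteq IJ$, and any prime containing $IJ$ but not $J$ must contain $I$. If the lemma is not phrased this way, we would prove this fact directly.
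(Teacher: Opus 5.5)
Your proposal is correct and follows the paper's proof step for step. Both arguments use flatness of $R^{\ps}/I_i^{\ps}\to(R^{\ps}/I_i^{\ps})[\![z_i]\!]$ with going-down, then Lemma~\ref{lem-minimal-primes}(2) with $I=I_i^{\ps}$, $J=I_{\irr}^{\ps}$, then reduction modulo $\tp_i$ and the minimality of $\p$.

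There are two small differences in presentation, neither of which changes the argument:
\begin{itemize}
\item Your Step~1 makes explicit, via Lemma~\ref{lem-minimal-primes}(1), that $\p$ is minimal over $I_i^{\ver}$. The paper uses this implicitly in the going-down step.
\item Your prime $\tp_iR^{\ver}+I_i^{\ver}$ is the same ideal as the paper's $\tp_iR^{\ver}(\w,\t)+(z_{3-i})$, because $I_i^{\ver}=I_i^{\ps}R^{\ver}+(z_{3-i})$ and $I_i^{\ps}\subseteq\tp_i$.
\end{itemize}
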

	\begin{proof}
		The proof is similar to that of
		Proposition~\ref{prop-key-irreducible}.
		Using the flatness of the map
		$
		R^{\ps}/I_i^{\ps}
		\rightarrow
		(R^{\ps}/I_i^{\ps})[\![z_i]\!]
		$
		and going-down, we deduce that $\tp_i/I_i^{\ps}$ is a
		minimal prime of $R^{\ps}/I_i^{\ps}$.
		Moreover,
		$\tp_i\in V(I_i^{\ps})\setminus V(I_{\irr}^{\ps})$,
		together with Lemma~\ref{lem-minimal-primes}(2), implies
		that  $\tp_i$ is a minimal prime ideal of
		$
		R_i(\w,\t)\simeq
		R^{\ps}/(I_i^{\ps}\cap I_{\irr}^{\ps})
		$. Hence
		$
		R^{\ps}/\tp_i
		\simeq R_i(\w,\t)/\tp_i
		$
		is a formal power series ring over $\calO$ by
		Theorem~\ref{thm-BM=1-non-split} and
		Remark~\ref{rmk-3}.
		Next, reducing modulo $\tp_i$ gives
		\[
		C:=(R^{\ps}/\tp_i)[\![z_i]\!]
		\simeq
		\frac{R^{\ver}/I_i^{\ver}}
		{\tp_i(R^{\ver}/I_i^{\ver})}
		\simeq
		\frac{R^{\ver}(\w,\t)}{\tp_i R^{\ver}(\w,\t)+(z_{3-i})}
		\twoheadrightarrow R^{\ver}(\w,\t)/\p.
		\]
		Here the second isomorphism follows from the relations $I_i^{\ver}=I_i^{\ps}R^{\ver}+(z_{3-i})$ and $
		I_i^{\ps}\subseteq\tp_i$, and the last map follows from
		$\tp_i \Rvv\subseteq\p$ and $z_{3-i}\in\p$.
		Since $C$ is an integral domain, indeed a formal power
		series ring over $\calO$, the ideal
		$\tp_i \Rvv+(z_{3-i})$ is prime.
		The minimality of $\p$ therefore gives
		$
		\p=\tp_i \Rvv+(z_{3-i})$.
		This proves the required isomorphism.
		In particular, $R^{\ver}(\p)$ is a formal power series
		ring over $\calO$, and
		$e(R^{\ver}(\p)/\vp)=1$.
	\end{proof}
	\begin{corollary}
		\label{cor-component-injection-2}
		There is an injection $$\Sigma_{0}(\bar\rho_0)\sqcup\Sigma_{k_2-k_1}(\bar\rho_0)\hookrightarrow \Sigma_{0}(\bar\rho_1)\sqcup\Sigma_{k_2-k_1}(\bar\rho_1)\sqcup\Sigma_{0}(\bar\rho_2)\sqcup\Sigma_{k_2-k_1}(\bar\rho_2).$$
	\end{corollary}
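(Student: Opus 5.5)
The plan is to define the map $\p\mapsto\tp_i$ of Proposition~\ref{prop-key-reducible} and check that it lands in the stated target and is injective. Fix $\p\in\Sigma_\goths(\bar\rho_0)$ with $\goths\in\{0,k_2-k_1\}$. By Lemma~\ref{lem-relation-slope-and-reducible-type}(2), $\p$ determines a unique $i\in\{1,2\}$ with $\p\in V(I_i^{\ver})\setminus V(I_{3-i}^{\ver})$. I then send $\p$ to the minimal prime $\tp_i\in\Min(R_i(\w,\t))$ supplied by Proposition~\ref{prop-key-reducible}.

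First I check that $\tp_i$ lies in $\Sigma_0(\bar\rho_i)\sqcup\Sigma_{k_2-k_1}(\bar\rho_i)$, i.e.\ that its generic points are reducible. This holds because $\tp_i\in V(I_i^{\ps})\setminus V(I^{\ps}_{\irr})$. Under the isomorphism $R^{\ps}/(I_i^{\ps}\cap I_{\irr}^{\ps})\simeq R_i(\w,\t)$ of Proposition~\ref{prop:kisin rings relation}, the closed points of $V(\tp_i)[1/\vp]$ contain $I_i^{\ps}\ni c$. Hence they correspond to reducible representations, and by Lemma~\ref{lem-crystabelline-slopes} their slope is $0$ or $k_2-k_1$. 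By Remark~\ref{rmk-3}, this slope is constant on the component $\Spec R_i(\w,\t)/\tp_i$. (One expects it to equal $\goths$, since traces and hence the Frobenius eigenvalue data are compatible along the map $R^{\ps}\to R^{\ver}$, but that is not needed for injectivity.)

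Injectivity is the main step. Suppose $\p,\p'$ have the same image. Since the images land in $\Min(R_i(\w,\t))$ with $i$ recorded by which summand ($\bar\rho_1$ or $\bar\rho_2$) they lie in, $\p$ and $\p'$ have the same index $i$ and $\tp_i=\tp'_i$. The proof of Proposition~\ref{prop-key-reducible} gives the explicit formula $\p=\tp_i\Rvv+(z_{3-i})$, and likewise $\p'=\tp'_i\Rvv+(z_{3-i})$. Therefore $\p=\p'$.

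The only subtle point is making sure that the index $i$ is intrinsic to $\p$, so that the target disjoint union genuinely separates the two cases. This is exactly the content of Lemma~\ref{lem-relation-slope-and-reducible-type}(2), which rests on the genericity assumption $a\neq p-2$ recorded in Remark~\ref{rmk-4}.
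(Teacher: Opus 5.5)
Your proposal is correct and follows the paper's proof. The map is $\p\mapsto\tp_i$, with $i$ fixed by Lemma~\ref{lem-relation-slope-and-reducible-type}(2), and injectivity comes from the identity $\p=\tp_i\Rvv+(z_{3-i})$ established in the proof of Proposition~\ref{prop-key-reducible}.

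You verify that the image lies in the boundary-slope part using $c\in I_i^{\ps}\subseteq\tp_i$. This is a harmless variant of the paper's remark that $\Sigma_\goths(\bar\rho_0)$ is sent into $\Sigma_\goths(\bar\rho_1)\sqcup\Sigma_\goths(\bar\rho_2)$. As you note, the finer slope preservation is not needed for the count in \S~\ref{finish-the-proof}.
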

	\begin{proof}
		This follows directly from Proposition~\ref{prop-key-reducible}, and the map is given by $\p\mapsto\tp_i$, where $i\in\{1,2\}$ is determined by $\p$ as in Lemma~\ref{lem-relation-slope-and-reducible-type}(2). This map is injective because $\p=\tp_iR^{\ver}(\w,\t)+(z_{3-i})$. Note that the image of $\Sigma_{\goths}(\bar\rho_0)$ lies in $\Sigma_{\goths}(\bar\rho_1)\sqcup\Sigma_{\goths}(\bar\rho_2)$ for $\goths\in\{0,k_2-k_1\}$ and at most one of $\Sigma_{\goths}(\bar\rho_1)$ and $\Sigma_{\goths}(\bar\rho_2)$ is non-empty by our genericity assumption.
	\end{proof}
	\subsection{Proof of Theorem~\ref{thm-main-thm-split}}
	\label{finish-the-proof}
	We finish the proof of Theorem~\ref{thm-main-thm-split} in this subsection, and the main idea is similar to the proof of Theorem~\ref{thm-BM=1-non-split}, i.e. by counting argument. We first introduce some notation. For $i\in\{0,1,2\}$ and $*\in\{0,k_2-k_1\}$, put $n_{*}(\bar\rho_i):=\#(\Sigma_{*}(\bar\rho_i))$. We also put $n_{\irr}(\bar\rho_i):=\#(\Sigma_{(0,k_2-k_1)}(\bar\rho_i))$ and write $n(\bar\rho_i)$ for the number of irreducible components of the Kisin ring $A$ corresponding to $\bar\rho_i$, namely, $A=R^{\ver}(\w,\t)$ if $i=0$ and $A=R_i(\w,\t)$ if $i=1,2$. We write $e_i=e(A/\vp)$ and $e'_i=e'(\Spf(A)^{\rig})$ for its Breuil--M\'ezard multiplicity and number of distinct crystabelline slopes, respectively.
	
	Combining the calculations in Proposition~\ref{prop-BM-mult=halo-mult}, Lemma~\ref{lem-multiplicity-difference}, and Corollary~\ref{cor-split-compare-mult}, we list the data for all the possible cases in Table~\ref{tab:kisin-invariants}, from which we observe the following properties.
	\begin{enumerate}
		\item $n_{\irr}(\bar\rho_1)=n_{\irr}(\bar\rho_2)$.
		\item $e'_0=n_{\irr}(\bar\rho_i)+n_{0}(\bar\rho_1)+n_{k_2-k_1}(\bar\rho_1)+n_{0}(\bar\rho_2)+n_{k_2-k_1}(\bar\rho_2)$.
		\item $e_0=n_{\irr}(\bar\rho_i)+e_0'$.
	\end{enumerate}
	Recall that $e'_0$ provides a naive lower bound for $n(\bar\rho_0)$.
	Combining the decomposition (Definition~\ref{def-decomp-of-Min(R)}) of irreducible components in terms of slopes with Corollaries~\ref{cor-component-injection-1} and \ref{cor-component-injection-2}, we have 
	\begin{equation}
		\label{eq-6}
		\begin{aligned}
			e'_0\leq n(\bar\rho_0)&=n_{\irr}(\bar\rho_0)+n_{0}(\bar\rho_0)+n_{k_2-k_1}(\bar\rho_0)
			\\&\leq n_{\irr}(\bar\rho_i)+n_{0}(\bar\rho_1)+n_{k_2-k_1}(\bar\rho_1)+n_{0}(\bar\rho_2)+n_{k_2-k_1}(\bar\rho_2)
			\overset{(2)}{=}e_0'.
		\end{aligned}
	\end{equation}
	Hence the injections of Corollaries~\ref{cor-component-injection-1} and \ref{cor-component-injection-2} are \emph{bijections}. In particular, we have 
	\begin{equation}
		\label{eq-7}
		n_{\irr}(\bar\rho_0)=n_{\irr}(\bar\rho_i)\quad\textrm{for} \quad i=1,2, \quad\textrm{and}\quad n(\bar\rho_0)=e'_0.
	\end{equation}  So we obtain the first assertion of Theorem~\ref{thm-main-thm-split}. In particular, for any $\goths$ occurring in the generic fiber $\Spf(R^{\ver}(\w,\t))^{\rig}$, there exists exactly one minimal prime $\p\in\Sigma_{\goths}(\bar\rho_0)$.
	
	We then calculate the Breuil--M\'ezard multiplicity of each irreducible component of $A:=R^{\ver}(\w,\t)$. Note that Theorem~\ref{thm-main-thm-split}(2) follows from Proposition~\ref{prop-key-reducible} directly. We need to prove Theorem~\ref{thm-main-thm-split}(1). Combining Propositions~\ref{prop-key-irreducible} and \ref{prop-key-reducible} with Lemmas~\ref{lem multiplicity-components} and \ref{lem-1}, we have
	\begin{equation}
		\begin{aligned}
			e_0=&\sum_{\p\in\Min(A)}e(A/(\p,\vp))=n_{0}(\bar\rho_0)+n_{k_2-k_1}(\bar\rho_0)+\sum_{\p\in\Sigma_{(0,k_2-k_1)}(\bar\rho_0)}e(A/(\p,\vp))
			\\&\leq n_{0}(\bar\rho_0)+n_{k_2-k_1}(\bar\rho_0)+2n_{\irr}(\bar\rho_i)\overset{\eqref{eq-7}}{=}n(\bar\rho_0)+n_{\irr}(\bar\rho_i)\overset{\eqref{eq-6}+(3)}{=}e_0.
		\end{aligned}
	\end{equation}
	Consequently, all the inequalities about Breuil--M\'ezard multiplicities in Proposition~\ref{prop-key-irreducible} are indeed equalities, and we finish the proof of Theorem~\ref{thm-main-thm-split}.
	\begin{table}[H]
		\centering
		\renewcommand{\arraystretch}{1.6}
		\setlength{\tabcolsep}{4pt}
		\resizebox{\textwidth}{!}{
			\begin{tabular}{|l|c|c|c|c|c|}
				\hline
				Conditions
				& $(e_0,e'_0)$
				& $(e_1,e_2)=(e'_1,e'_2)$
				& $(n_0(\bar\rho_1),n_0(\bar\rho_2))$
				& $(n_{k_2-k_1}(\bar\rho_1),n_{k_2-k_1}(\bar\rho_2))$
				& $n_{\irr}(\bar\rho_1)=n_{\irr}(\bar\rho_2)$ \\
				\hline
				\shortstack[l]{
					$s_\e=0$,\\
					$k_0=\{s_\e+a\}$
				}
				& $(2e_1-2,e_1)$
				& $(e_1,e_1-2)$
				& $(1,0)$
				& $(1,0)$
				& $e_1-2$ \\
				\hline
				\shortstack[l]{
					$s_\e=0$,\\
					$k_0=\{s'_\e+a'\}$
				}
				& $(2e_1,e_1+1)$
				& $(e_1,e_1)$
				& $(1,0)$
				& $(0,1)$
				& $e_1-1$ \\
				\hline
				\shortstack[l]{
					$s_\e=0$,\\
					$k_0\notin\bigl\{\{s_\e+a\},\{s'_\e+a'\}\bigr\}$
				}
				& $(2e_1-1,e_1)$
				& $(e_1,e_1-1)$
				& $(1,0)$
				& $(0,0)$
				& $e_1-1$ \\
				\hline
				\shortstack[l]{
					$s'_\e=0$,\\
					$k_0=\{s_\e+a\}$
				}
				& $(2e_1,e_1+1)$
				& $(e_1,e_1)$
				& $(0,1)$
				& $(1,0)$
				& $e_1-1$ \\
				\hline
				\shortstack[l]{
					$s'_\e=0$,\\
					$k_0=\{s'_\e+a'\}$
				}
				& $(2e_1+2,e_1+2)$
				& $(e_1,e_1+2)$
				& $(0,1)$
				& $(0,1)$
				& $e_1$ \\
				\hline
				\shortstack[l]{
					$s'_\e=0$,\\
					$k_0\notin\bigl\{\{s_\e+a\},\{s'_\e+a'\}\bigr\}$
				}
				& $(2e_1+1,e_1+1)$
				& $(e_1,e_1+1)$
				& $(0,1)$
				& $(0,0)$
				& $e_1$ \\
				\hline
				\shortstack[l]{
					$s_\e\neq0,\ s'_\e\neq0$,\\
					$k_0=\{s_\e+a\}$
				}
				& $(2e_1-1,e_1)$
				& $(e_1,e_1-1)$
				& $(0,0)$
				& $(1,0)$
				& $e_1-1$ \\
				\hline
				\shortstack[l]{
					$s_\e\neq0,\ s'_\e\neq0$,\\
					$k_0=\{s'_\e+a'\}$
				}
				& $(2e_1+1,e_1+1)$
				& $(e_1,e_1+1)$
				& $(0,0)$
				& $(0,1)$
				& $e_1$ \\
				\hline
				\shortstack[l]{
					$s_\e\neq0,\ s'_\e\neq0$,\\
					$k_0\notin\bigl\{\{s_\e+a\},\{s'_\e+a'\}\bigr\}$
				}
				& $(2e_1,e_1)$
				& $(e_1,e_1)$
				& $(0,0)$
				& $(0,0)$
				& $e_1$ \\
				\hline
			\end{tabular}
		}
		\caption{Multiplicities, numbers of distinct slopes and of irreducible components.}
		\label{tab:kisin-invariants}
	\end{table}
	\begin{remark}
		\label{rmk-5}
		By \cite[Theorem~7.3]{paskunas-2adic} (together with the geometric Breuil--M\'ezard conjecture for non-split $\bar\rho_1$ and $\bar\rho_2$), the \emph{special fiber} of
		$R_i(\w,\t)$ is contained in the reducible locus. This gives
		a direct proof that
		$
		e(R^{\ver}(\p)/\vp)=2$ for $\p\in\Sigma_{(0,k_2-k_1)}(\bar\rho_0)$.
		Indeed, let $\tp_i$ be a corresponding minimal prime in Proposition~\ref{prop-key-irreducible}. The image of $c$ in
		$R^{\ps}/(I_{\irr}^{\ps},\vp)$ is nilpotent. Since
		$I_{\irr}^{\ps}\subseteq\tp_i$, its image in
		$
		R^{\ps}/(\tp_i,\vp)
		\simeq R_i(\w,\t)/(\tp_i,\vp)$
		is also nilpotent. As $R_i(\w,\t)/\tp_i$ is a formal power
		series ring over $\calO$, the image of $c$ in this special fiber is zero.
		Proposition~\ref{prop-key-irreducible} therefore gives
		\[
		R^{\ver}(\p)/\vp
		\simeq
		\FF[\![T_1,\dots,T_n,x,y]\!]/(xy),
		\qquad
		n=\dim(R^{\ps}/\tp_i)-1.
		\]
		Consequently, this \emph{special fiber} has multiplicity $2$ and
		exactly two irreducible components.
	\end{remark}
	
	\begin{remark}
		\label{rmk-6}
		Let $\p\in\Min(R^{\ver}(\w,\t))$, and let $\tp_i$ denote
		the corresponding minimal prime of $R_i(\w,\t)$ for each relevant $i$,
		as provided by Propositions~\ref{prop-key-irreducible} and
		\ref{prop-key-reducible}. Then
		$
		e(R^{\ver}(\p)/\vp)
		=
		\sum_i e\bigl(R_i(\w,\t)/(\tp_i,\vp)\bigr)$,
		where the sum runs over both $i=1,2$ if
		$\p\in\Sigma_{(0,k_2-k_1)}(\bar\rho_0)$, and over the single
		index $i\in\{1,2\}$ specified in \eqref{eq-4} otherwise.
		We may upgrade this equality to an equality of cycles via results in \cite[\S~7]{paskunas-2adic}.
		More precisely, put $d=\dim R^{\ver}(\w,\t)$, and recall 
		the cycle map defined in
		\cite[\S~7, Equation~(15)]{paskunas-2adic}
		$$\alpha:\mathscr{Z}_{d-2}(R_1/(c))\oplus\mathscr{Z}_{d-2}(R_2/(c))\hookrightarrow\mathscr{Z}_{d-1}(R^{\ver}/(c)),$$
		where $\mathscr{Z}_{\ell}(A)$ is the group of $\ell$-dimensional cycles of a Noetherian ring $A$. Write $\alpha_1(Z):=\alpha(Z,0)$ and $\alpha_2(Z):=\alpha(0,Z)$ for the restrictions to the two summands.	
		Applying the
		argument of \cite[Theorem~7.1]{paskunas-2adic} to each irreducible component gives
		\[
		z_{d-1}\bigl(R^{\ver}(\p)/\vp\bigr)
		=
		\sum_i
		\alpha_i\!\left(
		z_{d-2}\bigl(R_i(\w,\t)/(\tp_i,\vp)\bigr)
		\right),
		\]
		where $z_\ell(R)$ denotes the $\ell$-dimensional cycle associated to $R$.
	\end{remark}
	\appendix
	\section{Commutative algebra}
	
	In this section, we prove several lemmas in commutative algebra used
	in the main part of this paper.
	Let $A$ be a complete Noetherian local $\calO$-algebra with maximal
	ideal $\gothm$ and residue field $\FF$.
	We write $e(A)$ for the \emph{Hilbert--Samuel multiplicity} of $A$
	with respect to $\gothm$.
	
	\begin{lemma}
		\label{lem-ca1}
		Let $a\in\gothm$ be a non-zero-divisor, and set
		$
		\ord_{\gothm}(a):=\max\{j\geq 0:a\in\gothm^j\}$.
		Then we have
		$
		e(A/a)\geq \ord_{\gothm}(a)\cdot e(A)$.
		
	\end{lemma}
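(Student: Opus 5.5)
We want to show $e(A/a)\geq \ord_{\gothm}(a)\, e(A)$ for a non-zero-divisor $a\in\gothm$. Put $r:=\ord_\gothm(a)$ and $d:=\dim A$. Since $a$ is a non-zero-divisor in $\gothm$, we have $\dim A/a=d-1$. The plan is to compare Hilbert--Samuel functions directly, using the exact sequence of multiplication by $a$ and the containment $a\gothm^{n-r}\subseteq\gothm^n$.

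For each $n\geq r$, consider the exact sequence
\[
0\longrightarrow \frac{A}{(\gothm^n:a)}\xrightarrow{\ \cdot a\ } \frac{A}{\gothm^n}\longrightarrow \frac{A}{(a)+\gothm^n}\longrightarrow0 .
\]
Because $a\in\gothm^r$, we have $\gothm^{n-r}\subseteq(\gothm^n:a)$, so
\[
\ell\bigl(A/(\gothm^n:a)\bigr)\leq \ell\bigl(A/\gothm^{n-r}\bigr).
\]
Additivity of length then gives
\[
\ell\bigl(A/((a)+\gothm^n)\bigr)\geq \ell(A/\gothm^n)-\ell(A/\gothm^{n-r}).
\]
The left-hand side is the Hilbert--Samuel function of $A/a$ with respect to its maximal ideal $\gothm(A/a)$.

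Next we compare leading terms. For $n\gg0$,
\[
\ell(A/\gothm^n)=\frac{e(A)}{d!}n^d+O(n^{d-1}),
\]
so
\[
\ell(A/\gothm^n)-\ell(A/\gothm^{n-r})=\frac{e(A)}{d!}\bigl(n^d-(n-r)^d\bigr)+O(n^{d-2})=\frac{r\,e(A)}{(d-1)!}n^{d-1}+O(n^{d-2}).
\]
Here we use that the difference of two values of the same Hilbert polynomial has degree at most $d-1$, with the stated leading term. On the other hand, since $\dim A/a=d-1$,
\[
\ell\bigl(A/((a)+\gothm^n)\bigr)=\frac{e(A/a)}{(d-1)!}n^{d-1}+O(n^{d-2}).
\]
Comparing the coefficients of $n^{d-1}$ yields $e(A/a)\geq r\,e(A)$.

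The only delicate points are the following.

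\textbf{Hilbert polynomial differences.} The $O(n^{d-1})$ error terms must not spoil the comparison. This is fine because we subtract the Hilbert \emph{polynomial} from itself. Concretely, $P(n)-P(n-r)$ has leading coefficient $r\cdot d\cdot e(A)/d!$ for $P$ of degree $d$.

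\textbf{The case $d=0$.} This case cannot occur, since $A$ has a non-zero-divisor in $\gothm$.

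\textbf{The case $d=1$.} Then $A/a$ is Artinian, and $e(A/a)=\ell(A/a)$ for $n\gg0$. The same argument works with constant leading terms.
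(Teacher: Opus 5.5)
Your proof is correct, but it takes a different route from the paper. The paper's proof is a pure citation. It notes that $\dim A/a=\dim A-1$, so $a$ is part of a system of parameters, and then invokes Theorem~14.9 of Matsumura's \emph{Commutative Ring Theory}.

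You instead prove the inequality from scratch, in three steps:
\begin{enumerate}
\item the exact sequence for multiplication by $a$ on $A/\gothm^n$;
\item the containment $\gothm^{n-r}\subseteq(\gothm^n:a)$, which comes from $a\in\gothm^r$;
\item a comparison of $n^{d-1}$-coefficients, using that $P(n)-P(n-r)$ has leading term $r\,e(A)\,n^{d-1}/(d-1)!$ when $P$ is the Hilbert--Samuel polynomial of $A$.
\end{enumerate}

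This is essentially the standard proof of such parameter-cutting inequalities, unwound for a single element. A general statement about ideals of definition would be applied to $\gothm^r$ and then rescaled via $e(\gothm^r,-)=r^{\dim}\,e(\gothm,-)$. Your shift by $r$ inside the Hilbert function does that rescaling directly.

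The trade-off is this. The paper's version is one line but relies on locating the right external statement. Yours is self-contained. It also makes clear that the non-zero-divisor hypothesis is used only to guarantee $\dim A/a=d-1$. So your argument gives the inequality for any $a\in\gothm$ avoiding the minimal primes of maximal dimension.

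Your handling of the boundary cases is also fine: $d=0$ cannot occur, and for $d=1$ the polynomial difference is exactly the constant $r\,e(A)$.
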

	
	\begin{proof}
		Since $a\in\gothm$ is a non-zero-divisor,
		$\dim(A/a)=\dim A-1$, so $a$ is part of a system of parameters
		of $A$; see, for example,
		\cite[Lemma~10.60.13]{stack} and its proof.
		The assertion then follows from \cite[Theorem~14.9]{mat}.
	\end{proof}
	
	\begin{lemma}
		\label{lem:mult leq 2}
		Let
		$
		A:=
		\frac{\FF[\![T_1,\dots,T_n]\!][\![x,y]\!]}{(xy-c)}$,
		where $c\in(T_1,\dots,T_n)$.
		Write
		$
		\ord(c):=
		\max\{j\geq 0:c\in(T_1,\dots,T_n)^j\}
		$.
		Then we have
		$
		e(A)=\min\{2,\ord(c)\}\leq 2$.
	\end{lemma}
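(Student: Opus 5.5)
The plan is to compute $e(A)$ through the associated graded ring. Put $B:=\FF[\![T_1,\dots,T_n]\!]$, $S:=B[\![x,y]\!]$, and let $\mathfrak n=(T_1,\dots,T_n,x,y)$ be the maximal ideal of $S$. The element $f:=xy-c$ is nonzero and lies in $\mathfrak n$: its image in $S/(T_1,\dots,T_n)=\FF[\![x,y]\!]$ is $xy\neq0$. Since $S$ is regular local of dimension $n+2$, $A=S/(f)$ is a hypersurface of dimension $n+1$. For a hypersurface $S/(f)$ in a regular local ring, $\operatorname{gr}_{\mathfrak n}(S/(f))\simeq \operatorname{gr}_{\mathfrak n}(S)/(\mathrm{in}(f))$, where $\mathrm{in}(f)$ is the initial form of $f$. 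Hence $e(S/(f))=\ord_{\mathfrak n}(f)$, the degree of $\mathrm{in}(f)$ in the polynomial ring $\FF[T_1,\dots,T_n,x,y]$. Alternatively, Lemma~\ref{lem-ca1} gives the inequality $e(A)\geq\ord_{\mathfrak n}(f)\,e(S)=\ord_{\mathfrak n}(f)$, and the standard Hilbert series computation for a principal homogeneous ideal gives equality.

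It therefore suffices to show $\ord_{\mathfrak n}(xy-c)=\min\{2,\ord(c)\}$. The first step is to check that $\ord_{\mathfrak n}(c)=\ord(c)$ for $c\in B$. This holds because $\mathfrak n^j\cap B=(T_1,\dots,T_n)^j$: write an element of $\mathfrak n^j$ as a power series in $x,y$ with coefficients in $B$, and note that its constant term lies in $(T)^j$. For the same reason, writing a general element of $S$ as $\sum_{i,k} b_{ik}x^iy^k$, it lies in $\mathfrak n^j$ if and only if each $b_{ik}\in (T)^{j-i-k}$.

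Now compute the order of $xy-c$. The monomial $xy$ has order exactly $2$. Since $c$ has no $x,y$-dependence, its terms cannot cancel with $xy$. So if $\ord(c)\geq2$, the initial form is either $xy$ (when $\ord(c)>2$) or $xy-\mathrm{in}(c)$ (when $\ord(c)=2$), which is nonzero in either case; the order is $2$. If $\ord(c)=1$, the initial form is the linear form $-\mathrm{in}(c)$, so the order is $1$. The case $\ord(c)=0$ cannot occur, because $c\in(T)$. (If $c=0$ we interpret $\ord(c)=\infty$, and the order is $2$.) This gives $e(A)=\min\{2,\ord(c)\}\leq2$.

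The only delicate point is justifying $e(S/(f))=\ord(f)$. I would cite \cite[Theorem~14.9]{mat} for the inequality, as in Lemma~\ref{lem-ca1}, and for equality use that $\operatorname{gr}(S)$ is a polynomial ring. Then $\operatorname{gr}(A)=\operatorname{gr}(S)/(\mathrm{in} f)$, since a principal ideal in a regular local ring has its initial ideal generated by the initial form. Its Hilbert series is $(1-t^d)/(1-t)^{n+2}$ with $d=\ord(f)$, whose multiplicity is $d$.
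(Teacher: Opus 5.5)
Your proposal is correct and follows the same route as the paper. The paper cites Nagata's formula (40.2), $e(S/(f))=\ord_{\gothm}(f)$ for a nonzero $f$ in a regular local ring $S$, and then notes that $\ord_{\gothm}(xy-c)=\min\{2,\ord(c)\}$ because $c$ involves only the $T_i$. You do the same, except that you prove the hypersurface formula directly via $\operatorname{gr}(S/(f))\simeq\operatorname{gr}(S)/(\mathrm{in}(f))$ and its Hilbert series rather than citing it.
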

	
	\begin{proof}
		Set $S=\FF[\![x,y,T_1,\dots,T_n]\!]$,
		$f=xy-c$, and $\gothm=(x,y,T_1,\dots,T_n)\subseteq S$.
		Since $S$ is a regular local ring and $f\neq 0$,
		\cite[(40.2)]{Nagata} gives
		$
		e(A)=\ord_{\gothm}(f)$.
		As $c$ involves only $T_1,\dots,T_n$, we have
		$
		\ord_{\gothm}(f)=\min\{2,\ord(c)\}$,
		which proves the assertion.
	\end{proof}
	
	Recall that $A$ is called \emph{unmixed} if
	$\dim A/\p=\dim A$ for every $\p\in\operatorname{Ass}(A)$.
	The next two lemmas are based on results of \cite{BM02}.
	
	\begin{lemma}
		\label{lem comm alg2}
		Let $A$ be a complete Noetherian local $\calO$-flat algebra
		that is equidimensional of dimension $n$ and has residue
		field $\FF$.
		Suppose that $A[1/\varpi]$ is reduced and $e(A/\varpi)=1$.
		Then
		$
		A\simeq\calO[\![X_1,\dots,X_{n-1}]\!]$.
	\end{lemma}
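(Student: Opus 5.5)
The plan is to first show that $A$ is a domain, and then prove it is regular with $\varpi$ not in $\gothm^2$. For the domain part I will use the additivity formula for multiplicities over minimal primes; this is Lemma~\ref{lem multiplicity-components}, cited in the proof of Lemma~\ref{lem-inequality-of-mult}, whose mechanism I reproduce here. Since $A$ is $\calO$-flat, $\varpi$ is a non-zero-divisor, so every associated prime of $A$ lies over $(0)\subset\calO$. Because $A[1/\varpi]$ is reduced, the map $A\to A[1/\varpi]$ is injective, and hence $A$ is reduced. Its associated primes are therefore exactly its minimal primes $\p_1,\dots,\p_r$, and all of them satisfy $\dim A/\p_i=n$ by equidimensionality.

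Applying the associativity formula to the system of parameters beginning with $\varpi$ gives
\[
e(A/\varpi)=\sum_i \ell(A_{\p_i})\,e\bigl(A/(\p_i,\varpi)\bigr)=\sum_i e\bigl(A/(\p_i,\varpi)\bigr).
\]
Here I use that $e(\varpi;A/\p)=e(A/(\p,\varpi))$ whenever $\varpi$ is regular on $A/\p$, and that $\ell(A_{\p_i})=1$ by reducedness. Each $A/(\p_i,\varpi)$ is nonzero of dimension $n-1$, so each summand is at least $1$. From $e(A/\varpi)=1$ we then get $r=1$, so $A$ is a domain.

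Next, $A/\varpi$ is a complete local ring of dimension $n-1$ with $e=1$. I will show that it is regular, i.e.\ $A/\varpi\simeq\FF[\![X_1,\dots,X_{n-1}]\!]$. The main obstacle is that Nagata's theorem (multiplicity one implies regular) requires unmixedness, and $A/\varpi$ need not be unmixed a priori. To handle this, I will follow the approach of \cite{BM02}. Let $N\subseteq A/\varpi$ be the largest ideal of dimension $<n-1$. Then $B:=(A/\varpi)/N$ is unmixed with $e(B)=e(A/\varpi)=1$, since lower-dimensional pieces do not contribute to the multiplicity. By Nagata \cite[(40.6)]{Nagata}, $B$ is regular, hence $B\simeq\FF[\![X_1,\dots,X_{n-1}]\!]$.

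Choose lifts $x_1,\dots,x_{n-1}\in A$ of the $X_i$ and consider $\phi:\calO[\![X_1,\dots,X_{n-1}]\!]\to A$. The composite $\calO[\![X]\!]\to A\to B$ is surjective modulo $\varpi$. The remaining task is to show that $\phi$ is an isomorphism, and the key point is to prove $N=0$. I would argue as follows. The image of $\phi$ together with $\varpi$ generates a subring $A'$ over which $A$ is finite, since $A/(\varpi,x_i)$ is finite and hence $A$ is finite over $\calO[\![X]\!]$ by completeness. Over the regular ring $\calO[\![X]\!]$, the module $A$ is finite and torsion-free, and its generic rank equals $e(A/\varpi)$, because for a finite torsion-free module over a regular local ring the multiplicity of $M/\varpi M$ equals the generic rank. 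Since this rank is $1$ and $A$ is a domain, $A$ is birational over $\calO[\![X]\!]$. Moreover, $A/\varpi$ is generated by $1$ modulo $(X)+N$. To conclude, I would show that $A$ is Cohen--Macaulay via $\varpi$-depth and use that a finite birational extension of a normal domain that is cyclic modulo the maximal ideal must be trivial. Concretely, Nakayama applied to the finite $\calO[\![X]\!]$-module $A$ requires $A/\gothm_{\calO[\![X]\!]}A=\FF$. Establishing this residue-degree-one statement, i.e.\ that $N$ vanishes modulo $(X)$, is the step I expect to be hardest. I would first try to deduce it from the rank-one count $e(A/\varpi)=\operatorname{rank}\ge \dim_\FF A/(\varpi,X)$, which would fail only if $A$ were not Cohen--Macaulay. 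In that case I would instead pass to the normalization, which is a regular domain finite over $\calO[\![X]\!]$ of rank one, hence equal to $\calO[\![X]\!]$, squeezing $A$ between $\calO[\![X]\!]$ and itself.
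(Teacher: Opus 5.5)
Your route is genuinely different from the paper's. It can be completed, but two steps need repair.

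\textbf{Comparison with the paper.} The paper avoids the obstacle you identified (that $A/\varpi$ is not known to be unmixed) by applying Nagata one level up. Lemma~\ref{lem-ca1} gives $\ord_{\gothm}(\varpi)\,e(A)\le e(A/\varpi)=1$, so $e(A)=1$ and $\varpi\notin\gothm^2$. Since $A$ is reduced and equidimensional it is unmixed, so Nagata shows that $A$ itself is regular. Then $\varpi$ extends to a regular system of parameters and $\calO[\![X_1,\dots,X_{n-1}]\!]\to A$ is an isomorphism. That argument needs neither your domain step, nor the unmixed quotient $B$, nor the finiteness and birationality argument. Your route replaces Lemma~\ref{lem-ca1} by the associativity formula and a Noether-normalization-plus-normality argument; once patched it is correct, only longer.

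\textbf{Step one.} You cannot literally invoke Lemma~\ref{lem multiplicity-components}, which assumes $A[1/\varpi]$ regular. However, the formula you actually use, $e(A/\varpi)=\sum_i \ell(A_{\p_i})\,e\bigl(A/(\p_i,\varpi)\bigr)$ for a nonzerodivisor $\varpi$, holds in general. So this step is fine.

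\textbf{The rank-one claim (a real gap).} Write $S=\calO[\![X_1,\dots,X_{n-1}]\!]$. Note that you also need $\phi$ injective, which follows from $\dim A=\dim S$ and $A$ being a domain. The fact you cite gives
\[
\operatorname{rank}_S A=e\bigl((\bar x_1,\dots,\bar x_{n-1});A/\varpi\bigr),
\]
the multiplicity of $A/\varpi$ with respect to the parameter ideal generated by the $\bar x_i$. This is not $e(A/\varpi)$: in general it is only $\ge e(A/\varpi)$, which would give only $\operatorname{rank}\ge 1$.

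What rescues the claim is your specific choice of the $x_i$. From $0\to N\to A/\varpi\to B\to 0$ and $\dim N<n-1$, additivity of $(n-1)$-dimensional multiplicities gives
\[
e\bigl((\bar x);A/\varpi\bigr)=e\bigl((\bar x)B;B\bigr)=e(\gothm_B;B)=1,
\]
because the $\bar x_i$ generate $\gothm_B$.

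Similarly, finiteness of $A/(\varpi,x_1,\dots,x_{n-1})$ needs $\operatorname{Supp}N\subseteq\operatorname{Supp}B$. This holds because in the complete local domain $A$ every minimal prime of $\varpi A$ has dimension $n-1$.

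\textbf{The ending.} Once $\operatorname{rank}_S A=1$, the conclusion is immediate and no hedging is needed. $A$ is integral over $S$ and contained in $\operatorname{Frac}(A)=\operatorname{Frac}(S)$, so $A=S$ because $S$ is normal. You need no Cohen--Macaulay property, no residue-degree computation, and no prior vanishing of $N$; the equality $N=0$ follows afterwards. Also, the inequality $\operatorname{rank}\ge\dim_\FF A/(\varpi,X)$ in your first attempt is backwards: $\dim_\FF A/\gothm_S A$ is the minimal number of generators of $A$ over $S$, which is at least the rank.
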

	
	\begin{proof}
		This is a slight generalization of \cite[Lemme~5.1.8]{BM02},
		which treats the case $n=2$ under the stronger assumption
		that $A[1/\varpi]$ is regular.
		By Nagata's multiplicity-one criterion
		\cite[Theorem~40.6]{Nagata}, a complete unmixed Noetherian
		local ring $R$ with $e(R)=1$ is regular.
		We first apply this criterion to show that $A$ is regular.
		
		Since $A$ is $\calO$-flat, $\varpi$ is a non-zero-divisor
		in $A$.
		Thus Lemma~\ref{lem-ca1} gives
		$
		\ord_{\gothm}(\varpi)\cdot e(A)
		\leq e(A/\varpi)=1$.
		It follows that $e(A)=1$ and
		$\varpi\in\gothm\setminus\gothm^2$.
		Moreover, $\calO$-flatness implies that the natural map
		$A\longrightarrow A[1/\varpi]$ is injective.
		Since $A[1/\varpi]$ is reduced, so is $A$.
		Consequently,
		$
		\operatorname{Ass}(A)=\operatorname{Min}(A)$.
		Together with the equidimensionality of $A$, this gives
		$\dim A/\p=\dim A$ for every $\p\in\operatorname{Ass}(A)$.
		Thus $A$ is unmixed, and Nagata's criterion implies that
		$A$ is regular.
		
		Since $A$ is regular of dimension $n$ and
		$\varpi\in\gothm\setminus\gothm^2$, we may choose
		$x_1,\dots,x_{n-1}\in\gothm$ such that
		$\bar\varpi,\bar x_1,\dots,\bar x_{n-1}$ form an
		$\FF$-basis of $\gothm/\gothm^2$.
		Nakayama's lemma then gives
		$
		\gothm=(\varpi,x_1,\dots,x_{n-1})$.
		By \cite[Theorem~7.16(a)]{eis}, there is a unique continuous
		homomorphism of $\calO$-algebras
		\[
		\phi\colon B:=\calO[\![X_1,\dots,X_{n-1}]\!]
		\longrightarrow A,
		\qquad X_i\longmapsto x_i.
		\]
		Since $\calO/(\varpi)\simeq\FF\simeq A/\gothm$ and
		$\gothm=(\varpi,x_1,\dots,x_{n-1})$,
		\cite[Theorem~7.16(b)]{eis} implies that $\phi$ is surjective.
		Since $A$ and $B$ are regular and $\phi$ identifies their
		chosen regular systems of parameters, the induced map on
		associated graded rings is an isomorphism.
		Hence \cite[Theorem~7.16(c)]{eis} implies that $\phi$ is
		injective, and we obtain the required isomorphism
		$
		\calO[\![X_1,\dots,X_{n-1}]\!]\simeq A$.
	\end{proof}
	
	\begin{lemma}
		\label{lem multiplicity-components}
		Let $A$ be a complete Noetherian local $\calO$-flat algebra
		that is equidimensional.
		Suppose that $A[1/\varpi]$ is regular.
		Write
		$
		\operatorname{Min}(A)
		=\{\mathfrak p_1,\dots,\mathfrak p_r\}$, and put
		$A_i:=A/\mathfrak p_i$.
		Then we have
		\[
		e(A/\varpi)
		=\sum_{i=1}^r e(A_i/\varpi)\geq r.
		\]
	\end{lemma}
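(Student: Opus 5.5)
We will prove the equality by reducing to the irreducible components via the associativity formula for multiplicities. Since $A$ is $\calO$-flat, $\varpi$ is a non-zero-divisor on $A$ and on each $A_i$, so $\dim A/\varpi=\dim A-1=\dim A_i/\varpi$, the latter equality by equidimensionality. Let $d:=\dim A-1$. The plan is to compare the $A$-module $A/\varpi$ with the $A$-module $\bigoplus_i A_i/\varpi$, using the natural injection
\[
A\hookrightarrow \prod_{i=1}^r A_i,
\]
which is injective because $A$ is reduced. Reducedness of $A$ follows from $\calO$-flatness and the reducedness of $A[1/\varpi]$, which in turn follows from regularity.

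The key step is to show that the cokernel $Q$ of this injection is $\varpi$-power torsion. Indeed, after inverting $\varpi$, the map becomes $A[1/\varpi]\to\prod_i A_i[1/\varpi]$. This is an isomorphism because $A[1/\varpi]$ is regular, hence a finite product of domains whose minimal primes are the $\p_iA[1/\varpi]$ (as explained in \S~\ref{subsection-components-and-BM-mult}). Thus $Q$ is a finitely generated $A$-module killed by some $\varpi^N$, and $\dim Q\le\dim A/\varpi=d$.

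Next, we tensor the short exact sequence
\[
0\to A\to\prod_iA_i\to Q\to0
\]
with $A/\varpi$. Since $\prod_iA_i$ is $\varpi$-torsion free, the long exact Tor sequence gives
\[
0\to Q[\varpi]\to A/\varpi\to\prod_i A_i/\varpi\to Q/\varpi Q\to0.
\]
By additivity of the degree-$d$ Hilbert--Samuel multiplicity $e_d(-)$ of $\gothm$ on modules of dimension at most $d$, we obtain
\[
e(A/\varpi)-\sum_i e(A_i/\varpi)=e_d(Q[\varpi])-e_d(Q/\varpi Q).
\]
The main obstacle is to show that this difference vanishes. We handle it by the standard fact that for a finitely generated module $Q$ of dimension at most $d$ and any $x\in\gothm$, the length-type invariant $e_d(Q/xQ)-e_d(Q[x])$ equals the multiplicity symbol $e(x;Q)$ in dimension $d+1$, which is $0$ because $\dim Q\le d<d+1$. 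This is the Euler characteristic/Koszul homology statement, see \cite[Theorem~14.11]{mat} or the additivity of $\chi$. Alternatively, we can filter $Q$ by the submodules $\varpi^jQ$ and note that each step contributes canceling terms, since both $Q[\varpi]$ and $Q/\varpi Q$ have the same class in the Grothendieck group modulo modules of dimension $<d$ when $Q$ is $\varpi$-power torsion. This yields $e(A/\varpi)=\sum_ie(A_i/\varpi)$.

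Finally, each $A_i/\varpi$ is a nonzero Noetherian local ring of dimension $d$, so $e(A_i/\varpi)\ge1$, giving the bound $\ge r$.
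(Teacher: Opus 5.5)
Your argument is correct, but it takes a different route from the paper. The paper computes nothing directly. It invokes \cite[Lemma~5.1.6]{BM02}, which gives $e(A/\varpi)=\sum_i e(A_i/\varpi)$ once the condition ``$A\sim\prod_iA_i$'' of \cite[Definition~5.1.3]{BM02} holds. It then only verifies that condition: each $A\to A_i$ is surjective, and $A[1/\varpi]\to\prod_iA_i[1/\varpi]$ is an isomorphism. Injectivity comes from reducedness of $A$, and surjectivity from the Chinese remainder theorem, since regularity makes the ideals $\p_iA[1/\varpi]$ pairwise comaximal.

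You instead reprove the content of that black box. The generic-fiber isomorphism forces the cokernel $Q$ of $A\hookrightarrow\prod_iA_i$ to be killed by a power of $\varpi$. The Tor (equivalently, snake) sequence then reduces the defect to $e_d(Q[\varpi])-e_d(Q/\varpi Q)$, and you show this vanishes. Your version is self-contained and shows that only the generic-fiber isomorphism matters; surjectivity onto the factors is never used. The paper's version is shorter, at the cost of an external reference.

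One simplification: the step you call the main obstacle needs neither \cite[Theorem~14.11]{mat} nor a Grothendieck-group argument. That theorem concerns Koszul Euler characteristics with finite-length homology, so it is not quite the right citation for a dimension-$d$ statement. Since $\dim Q\le d$, you can apply additivity of $e_d$, which you already use, to the exact sequence $0\to Q[\varpi]\to Q\xrightarrow{\varpi}Q\to Q/\varpi Q\to0$. This gives $e_d(Q[\varpi])-e_d(Q)+e_d(Q)-e_d(Q/\varpi Q)=0$ at once.
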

	
	\begin{proof}
		By \cite[Lemma~5.1.6]{BM02}, we only need to check that the condition ``$A\sim \Pi_iA_i$" defined in \cite[Definition~5.1.3]{BM02} holds, namely
		\begin{enumerate}
			\item The natural map $A\rightarrow \Pi_iA_i\twoheadrightarrow A_i$ is surjective for any $i$, and 
			\item 
			the above map induces an isomorphism on the generic fiber $A[1/\varpi]\simeq\Pi_iA_i[1/\varpi]$.
		\end{enumerate}
		(1) is clear in our case and (2) follows from the reducedness of $A$ and the regularity of $A[1/\varpi]$. Indeed, it suffices to show the natural homomorphism $$\varphi:A[1/\varpi]\rightarrow \Pi_iA[1/\varpi]/\gothp_iA[1/\varpi]=\Pi_i A_i[1/\varpi]$$ is an isomorphism. The kernel of $\varphi$ is $\cap_i(\gothp_iA[1/\varpi])=(\cap_i\gothp_i)A[1/\varpi]=0$ as $A$ is reduced. The regularity of $A[1/\varpi]$ ensures that irreducible components of $\Spec A[1/\varpi]$ are pairwise disjoint, namely, $\gothp_iA[1/\varpi]+\gothp_jA[1/\varpi]=A[1/\varpi]$ for any $i\neq j$. By the Chinese remainder theorem we deduce that $\varphi$ is an isomorphism. This lemma is proved.
	\end{proof}
	
	\begin{lemma}
		\label{lem-minimal-primes}
		Let $A$ be a ring and let $I,J\subseteq A$ be ideals. 
		\begin{enumerate}
			\item If
			$\mathfrak p$ is minimal over $I\cap J$ and $\mathfrak p\in V(I)$, then $\mathfrak p$ is minimal over $I$.
			\item If
			$\mathfrak p$ is minimal over $I$ and $\mathfrak p\in V(I)-V(J)$, then $\mathfrak p$ is minimal over $I\cap J$.
		\end{enumerate}
	\end{lemma}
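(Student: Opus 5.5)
Both parts are pure prime-ideal bookkeeping, using only that a prime ideal containing an intersection of two ideals contains one of them. Throughout, "$\mathfrak p$ minimal over $I$" means $\mathfrak p$ is a prime containing $I$ and minimal among such primes.

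For (1), let $\mathfrak p$ be minimal over $I\cap J$ with $I\subseteq\mathfrak p$. Take any prime $\mathfrak q$ with $I\subseteq\mathfrak q\subseteq\mathfrak p$. Since $I\cap J\subseteq I\subseteq\mathfrak q$, the prime $\mathfrak q$ contains $I\cap J$. Minimality of $\mathfrak p$ over $I\cap J$ then forces $\mathfrak q=\mathfrak p$, so $\mathfrak p$ is minimal over $I$.

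For (2), let $\mathfrak p$ be minimal over $I$ with $J\not\subseteq\mathfrak p$. First, $I\cap J\subseteq I\subseteq\mathfrak p$, so $\mathfrak p$ contains $I\cap J$. Now take a prime $\mathfrak q$ with $I\cap J\subseteq\mathfrak q\subseteq\mathfrak p$.
\begin{itemize}
\item Because $IJ\subseteq I\cap J\subseteq\mathfrak q$ and $\mathfrak q$ is prime, either $I\subseteq\mathfrak q$ or $J\subseteq\mathfrak q$.
\item The second case would give $J\subseteq\mathfrak q\subseteq\mathfrak p$, contradicting $\mathfrak p\notin V(J)$.
\item Hence $I\subseteq\mathfrak q\subseteq\mathfrak p$, and minimality of $\mathfrak p$ over $I$ gives $\mathfrak q=\mathfrak p$.
\end{itemize}
So $\mathfrak p$ is minimal over $I\cap J$.

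The only step that needs any care is the prime-avoidance fact $I\cap J\subseteq\mathfrak q\Rightarrow I\subseteq\mathfrak q$ or $J\subseteq\mathfrak q$, which follows from $IJ\subseteq I\cap J$ and primality of $\mathfrak q$. No Noetherian hypothesis is needed.
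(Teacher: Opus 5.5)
Your proof is correct and follows the paper's argument step for step: in (1) you squeeze a prime between $I$ and $\mathfrak p$ and invoke minimality over $I\cap J$, and in (2) you use $IJ\subseteq I\cap J\subseteq\mathfrak q$ together with $J\not\subseteq\mathfrak p$ to force $I\subseteq\mathfrak q$. One small terminology point: the fact you use is just the primality of $\mathfrak q$ applied to the product ideal $IJ$, not the prime avoidance lemma, which concerns an ideal contained in a finite union of primes.
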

	\begin{proof}
		For (1),
		let $\mathfrak q$ be a prime ideal satisfying
		$
		I\subseteq\mathfrak q\subseteq\mathfrak p$.
		Then $I\cap J\subseteq I\subseteq\mathfrak q\subseteq\gothp$. Since $\mathfrak p$ is minimal
		over $I\cap J$, it follows that $\mathfrak q=\mathfrak p$. Hence
		$\mathfrak p$ is minimal over $I$.
		
		For (2), let $\mathfrak q$ be a prime ideal such that
		$
		I\cap J\subseteq \mathfrak q\subseteq \mathfrak p$.
		Since
		$
		IJ\subseteq I\cap J\subseteq \mathfrak q$
		and $\mathfrak q$ is prime, either $I\subseteq\mathfrak q$ or
		$J\subseteq\mathfrak q$. Our assumption $\gothp\notin V(J)$ implies that $J$ is not contained in $\mathfrak q$, hence $I\subseteq\mathfrak q$. Since
		$\mathfrak p$ is minimal over $I$, we conclude that
		$\mathfrak q=\mathfrak p$. Hence $\mathfrak p$ is minimal over
		$I\cap J$.
	\end{proof}
	
\end{document}